\documentclass[10pt]{amsart}

\usepackage{vmargin}

\setpapersize{A4}
\setmargins{2.75cm}       
{1.5cm}                        
{15.5cm}                      
{23.42cm}                    
{10pt}                           
{1cm}                           
{0pt}                             
{2cm}

\usepackage{amscd, amssymb, amsfonts, amsthm, amsmath}
\usepackage[all]{xy}
\usepackage{color}
\usepackage{mathdots}
\usepackage{hyperref}

\usepackage{graphicx}

\begin{document}
\newcommand{\mono}[1]{%
\gdef\puA{#1}}
\newcommand{\puA}{}
\newcommand{\faculty}[1]{%
\gdef\puC{#1}}
\newcommand{\puC}{}
\newcommand{\facultad}[1]{%
\gdef\puD{#1}}
\newcommand{\puD}{}
\newtheorem{teo}{Theorem}[section]
\newtheorem{prop}[teo]{Proposition}
\newtheorem{lema}[teo] {Lemma}
\newtheorem{ej}[teo]{Example}
\newtheorem{obs}[teo]{Remark}
\newtheorem{defi}[teo]{Definition}
\newtheorem{coro}[teo]{Corollary}
\newtheorem{nota}[teo]{Notation}

\def\F{\mathbb{F}}
\def\C{\mathbb{C}}
\def\R{\mathbb{R}}
\def\Z{\mathbb{Z}}
\def\Q{\mathbb{Q}}
\def\N{\mathbb{N}}
\def\P{\mathbb{P}}
\def\S{\mathbb{S}}
\def\D{\mathbb{D}}
\def\H{\mathbb{H}}
\def\T{\mathbb{T}}
\def\d{\mathfrak{d}}
\def\AC{\mathcal{AC}}
\def\ACI{\mathcal{PAC}_{+}(\S^{1})}
\def\PC{\mathcal{PC}_{+}(\S^{1})}
\def\IET{\textnormal{IET}}
\def\AIET{\textnormal{AIET}}
\def\PL{\textnormal{PL}}

\newenvironment{note}{\noindent Notation: \rm}

\title{ Continuous homomorphisms on piecewise absolutely continuous maps of $\S^{1}$ }

\author[Barrios]{Marcos Barrios}
\address{Universidad de La Rep\'ublica,
Uruguay} \email{marcosb@fing.edu.uy}

\subjclass[2010]{Primary 37E05, Secundary 22F05}
\keywords{Piecewise continuous group, affine interval exchange transformation}
\maketitle

\begin{abstract}{
Let $\IET(\S^{1})$ be the group of interval exchange transformation of $\S^{1}$ and \(\AC_{+}(\S^{1})\) be the group of absolutely continuous preserving orientation bijection with inverse absolutely continuous.
We denote by \(\ACI\) the group generated by \(\IET(\S^{1}\)) and \(\AC_{+}(\S^{1})\).
Given a suitable distance on $\ACI$, we classify all continuous homomorphisms $\rho:\R \to \ACI$. More precisely, \(\rho\) is conjugated to  a continuous homomorphism \(\hat{\rho}:\R \to \AC_{+}(\uplus_{i}(\S^{1})_{i})\).}

\end{abstract} 

\section{Introduction}
An interval exchange transformation of $[0,1)$ is a bijection $f:[0,1) \to [0,1)$ defined by a finite partition of the unit interval into half-open intervals (i.e. \([b_{i},b_{i+1})\)) and rearrangement of these intervals.
More precisely, there exists a permutation $\sigma:\{1,...,n\} \to \{1,...,n\}$ and $b_{i} \in [0,1)$, with \(b_{1} = 0,\, b_{n+1} = 1\) and $b_{i} < b_{j}$ if $i < j$, such that $f(x) = x - b_{j} + \displaystyle\sum_{\sigma(i) < \sigma(j)} (b_{i+1}-b_{i})$ if $x \in [b_{j},b_{j+1})$. 
Let \(\IET\) be the group of all interval exchange transformations of \([0,1)\).

The dynamic of an interval exchange transformation has been studied since the 70's. An important result is that almost every interval exchange transformation is uniquely ergodic, proved independently by Masur \cite{Masur} and Veech \cite{Veech}.

Later \(\IET\) has been studied as a group. For example, in \cite{Novak2},  Novak proved that \(\IET\) does not contain distortion elements.

An important problem about \(\IET\) is the following question formulated by Katok. Does \(\IET\) contain a non abelian free group?

In \cite{Dahmani1}, Dahamani, Fujiwara and Guirardel proved that a group generated by a generic pair of elements of \(\IET\) is not a non abelian free group. Also they proved that a connected Lie group can be embedded in \(\IET\) if and only if it is abelian.
In \cite{Dahmani2} these authors worked in solvable groups of \(\IET\), and proved that some groups cannot be embedded in \(\IET\).

The group \(\IET\) has canonical inclusions into other groups, like interval exchange transformations with flips (FIET), affine interval exchange transformations (AIET) and permutation of \(\S^{1}\) that are continuous outside a finite subset ($\widehat{\text{PC}^{\bowtie}}(\S^{1})$).

These groups have been studied in several works.

Unlike the case of \(\IET\), for FIET that reverse orientation in at least one interval Nogueira proved that almost all have periodic points, in particular they are non-ergodic \cite{Nogeuira}.

The group FIET is simple, this is an unpublished result of Arnoux \cite{Arnoux}.
Later, Guelman and Liousse calculated an uniform bound for the commutator length \cite{Guelman1}.

In \cite{Boulanger}, Boulanger, Fougeron and Ghazouani, study the dynamics of a 1-parameter family in \(\AIET\) in which the dynamics is trivial in a full measure set but not on a Cantor set.

Some question are simpler in \(\AIET\) than in \(\IET\), for example the Katok question has an affirmative answer in \(\AIET\).
More precisely, via a ping-pong argument, it can be built an explicit example of a non abelian free group in \(\AIET\) \cite{Navas}.

Let ($\mathcal{G}_{fin}(\S^{1})$) be the group of permutation of \(\S^{1}\) with finite support. It is a normal subgroup in $\widehat{\text{PC}^{\bowtie}}(\S^{1})$. 

In \cite{Cornulier} Cornulier studied when a subgroup of $\widehat{\text{PC}^{\bowtie}}(\S^{1})/\mathcal{G}_{fin}(\S^{1})$ can be lifted to $\widehat{\text{PC}^{\bowtie}}(\S^{1})$.
Examples of groups that can be lifted are the subgroups of \(\text{PC}_{+}(\S^{1})\), the group of piecewise continuous bijection of \(\S^{1}\), that are piecewise orientation-preserving. 
In this article we work with a subgroup of \(\text{PC}_{+}(\S^{1})\).

\begin{defi} \label{interval def of ACI}
A bijection $f:\S^{1} \to \S^{1}$ is called a right piecewise absolutely continuous if

\begin{enumerate}
\item is right-continuous,
\item the set of discontinuity is finite,
\item if $f$ is continuous in $[a,b)$ then $f$ is absolutely continuous in $[a,b)$.
\item if $f$ is continuous in $[a,b)$ then $[f(a),f(b)) = f([a,b))$.
\end{enumerate}

We denote by $\ACI$ the group of all bijections \(f:\S^{1} \to \S^{1}\) such that \(f\) and \(f^{-1}\) are right piecewise absolutely continuous.

\end{defi}

The definition of intervals in \(\S^{1}\) is introduced in Notation \ref{notation intervals}.
For precise definition of right-continuous and absolutely continuous see Definition \ref{def rlim rcont abs cont}.

If we exclude the item \(1\) of this definition, we obtain another group denoted by \(\widehat{\mathcal{PAC}}_{+}(\S^{1})\), (following the notation in \cite{Cornulier}).
This group contains the normal subgroup of finitely supported permutation in \(\S^{1}\) denoted by (\(\mathcal{G}_{fin}(\S^{1})\)).

These three groups are related by \(\ACI \simeq (\widehat{\mathcal{PAC}}_{+}(\S^{1})/\mathcal{G}_{fin}(\S^{1}))\). In other words, the condition 1 implies the only function with finite support in \(\ACI\) is the identity function.

In the case that $f$ is continuous, property 4 is equivalent to orientation preserving. Also, property 4 of this definition implies that $f$ preserve orientation on intervals of continuity.
In particular if $f$ is continuous in $[a,b]$ then $f^{-1}$ is continuous in $[f(a),f(b)]$.

The group \(\ACI\) verifies \(\ACI = \langle \IET(\S^{1}), \mathcal{AC}_{+}(\S^{1})\rangle\), in a similar way that AIET verifies  \(\AIET = \langle \IET,\PL_{+}([0,1))\rangle\).

We focus on the work of Novak in \cite{Novak1}. The main result of this article is a classification theorem: given a natural unrestricted topology in \(\IET\), any continuous homomorphism $\tau: \R \to \IET$ is conjugated to a standard torus action in one parameter.

\begin{defi} \label{standard torus action}
Let $\lambda = (\lambda_{1},\lambda_{2},...,\lambda_{n}) \in (0,1)^{n}$ with $\sum_{i} \lambda_{i} = 1$. Let $\beta_{i}$ the sum $\beta_{i} = \sum_{j \leq i} \lambda_{j}$, in particular \(\beta_{0} = 0\) and \(\beta_{n} = 1\). The standard torus action associated to $\lambda$ is the function $\tau_{\lambda}: \T^{n} \to \IET$ define by 

$$ [\tau_{\lambda}(\alpha)](x) = \left \{ \begin{matrix} 
x + \lambda_{j} \alpha_{j} & \mbox{if } x \in [\beta_{j-1}, \beta_{j} - \lambda_{j}\alpha_{j})\\
 x + \lambda_{j}\alpha_{j} - \beta_{j} & \mbox{if } x \in [\beta_{j} - \lambda_{j}\alpha_{j}, \beta_{j}) \end{matrix}\right.$$

An one parameter standard torus action is a map $\theta:\R \to \IET$ such that $\theta(t) = \tau_{\lambda}(t \alpha)$ for some $\alpha, \lambda$ in the previous conditions.
\end{defi}

In other words the standard torus action is a rotation in each interval $[\beta_{j},\beta_{j+1})$ of angle $\alpha_{j}$ (see figure 1).

\begin{figure}[ht]\centering   \label{example st torus action}
\includegraphics[scale=0.1]{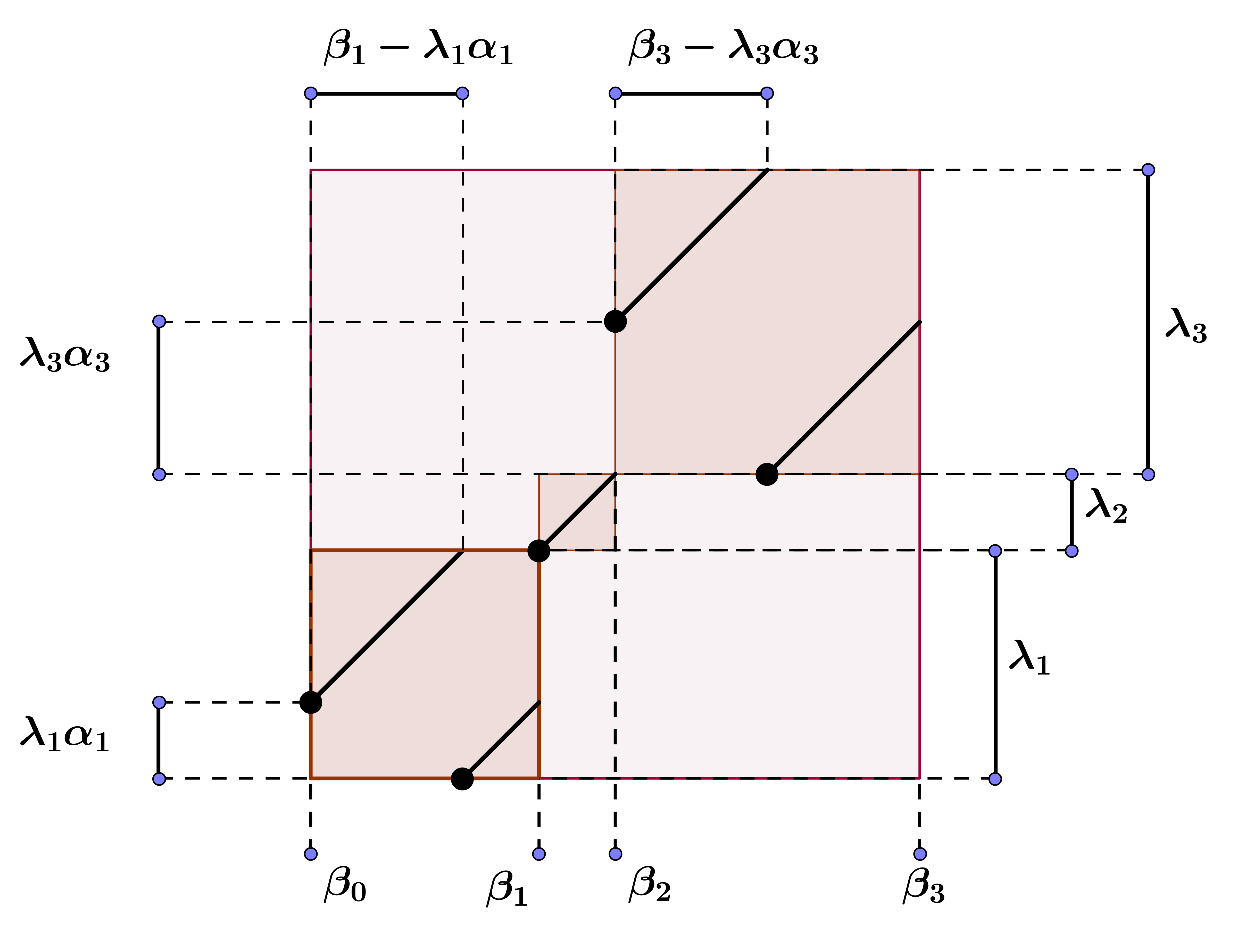}
\caption{Standard torus action (in this case $\alpha_{1} = \frac{1}{3}$, $\alpha_{2} = 0$ and $\alpha_{3} = \frac{1}{2}$) }
\end{figure}

To study continuity in our context we will provide a distance in \(\ACI\).

\begin{defi} \label{distance in ACI}
We define $\tilde{d}_{1}, d :\ACI \times \ACI \to \R^{+} \cup \{0\}$ by $$\tilde{d}_{1}(f,g) = \int_{\S^{1}} \d(f(x),g(x)) d\mu , \hspace{0.5cm}  d(f,g)= \tilde{d}_{1}(f,g) +  \tilde{d}_{1}(f^{-1},g^{-1})$$
where \(\d\) is the distance in \(\S^{1}\), and \(\mu\) the measure generated by \(\d\) (Notation \ref{distance S1}).
\end{defi}

We study the distance $d$ in the section 3. In particular we will prove the following result.

\begin{prop} \label{ACI is a topological group}
The group $(\ACI, d)$ is a topological group.
\end{prop}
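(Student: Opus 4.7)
Inversion is in fact a $d$-isometry (hence continuous): for any $f,g\in\ACI$,
\[
d(f^{-1},g^{-1}) = \tilde{d}_{1}(f^{-1},g^{-1}) + \tilde{d}_{1}(f,g) = d(f,g),
\]
directly from the symmetric definition of $d$.

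The remaining and substantial task is continuity of multiplication. Given $f_n\to f$ and $g_n\to g$ in $d$, I would split
\[
\tilde{d}_{1}(f_ng_n,fg) \le \tilde{d}_{1}(f_n g_n, f g_n) + \tilde{d}_{1}(f g_n, fg)
\]
and symmetrically on the inverse side. Of the four resulting summands, those in which the outer factor is the limit, such as $\tilde{d}_{1}(f g_n, fg) = \int \d(f(g_n(x)),f(g(x)))\,d\mu$, I would treat by a dominated convergence/subsequence argument: $\tilde{d}_{1}$-convergence implies convergence in $\mu$-measure, hence a.e.\ along subsequences; since $f$ has only finitely many discontinuities (properties (1)--(2) of Definition~\ref{interval def of ACI}) and $g$ maps null sets to null sets (property (3)), one gets $f(g_n(x))\to f(g(x))$ a.e.; boundedness of $\d$ on $\S^1$ then permits DCT, and the usual ``every subsequence has a sub-subsequence converging to $0$'' upgrade gives convergence of the full sequence.

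The hard summands are $\tilde{d}_{1}(f_n g_n, f g_n)$ and $\tilde{d}_{1}(g_n^{-1} f_n^{-1}, g_n^{-1} f^{-1})$. A change of variable, valid because $g_n^{-1}$ and $f_n$ are piecewise absolutely continuous bijections with a.e.\ derivatives, converts these into
\[
\int \d(f_n(y), f(y))\,(g_n^{-1})'(y)\,dy \quad \text{and} \quad \int \d(g_n^{-1}(y), g^{-1}(y))\,f_n'(y)\,dy,
\]
weighted $L^1$-integrals whose weights vary with $n$ and are controlled only by $\|(g_n^{-1})'\|_1 = \|f_n'\|_1 = 1$, not in $L^\infty$. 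This will be the principal technical hurdle, since a direct DCT is unavailable. My plan to close the argument is to extract uniform integrability of the families $\{(g_n^{-1})'\}_n$ and $\{f_n'\}_n$ from the assumption $g_n\to g$, $f_n\to f$ in $d$, by exploiting absolute continuity of the limits $g^{-1}, f$ together with the in-measure convergence $g_n^{-1}\to g^{-1}$ and $f_n\to f$: for every $\epsilon$ one should obtain a $\delta$ such that $\mu(E)<\delta$ forces $\int_E (g_n^{-1})'\,d\mu < \epsilon$ uniformly in $n$, and similarly for $f_n'$. Combined with Markov's inequality $\mu(\{y:\d(f_n(y),f(y))>\epsilon\}) \le \epsilon^{-1}\tilde{d}_{1}(f_n,f)$ and splitting each integrand above and below a threshold $\epsilon$, the estimate closes. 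Establishing this uniform integrability from convergence in $d$ is the step I expect to require the most care.
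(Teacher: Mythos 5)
Your first two steps are sound and essentially parallel the paper: inversion is an isometry by the symmetry of $d$, and the summands in which the outer factor is the fixed limit map (such as $\tilde{d}_{1}(f\circ g_{n},f\circ g)$) can indeed be handled through convergence in measure plus the finiteness of $BP_{0}(f)$ — this is, in different clothing, the paper's proof that $\tilde{L}_{h}$ and $\tilde{R}_{h}$ are continuous via the sets $U(f,g)[\delta]$ (Proposition \ref{convergence by sets}, Remark \ref{acotacion de conjuntos por h}, and the neighborhoods $V_{n}(h)$ of Definition \ref{Vn}). The gap is exactly the step you flag: uniform integrability of $\{(g_{n}^{-1})'\}$ does \emph{not} follow from $d(g_{n},g)\to 0$, and it is in fact false. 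Take $n$ even, split $\S^{1}$ into $J_{k}=[\frac{k}{n},\frac{k+1}{n})$, and let $g_{n}$ map each $J_{k}$ onto itself affinely in two pieces, sending $[\frac{k}{n},\frac{k}{n}+\frac{1}{2n})$ onto $[\frac{k}{n},\frac{k+1}{n}-\frac{1}{n^{2}})$ and $B_{k}=[\frac{k}{n}+\frac{1}{2n},\frac{k+1}{n})$ onto $C_{k}=[\frac{k+1}{n}-\frac{1}{n^{2}},\frac{k+1}{n})$. Then $\d(g_{n}^{\pm1}(x),x)\le \frac{1}{n}$ for all $x$, so $d(g_{n},id)\to 0$; yet $E_{n}=\cup_{k}C_{k}$ has $\mu(E_{n})=\frac{1}{n}$ while $\int_{E_{n}}(g_{n}^{-1})'\,d\mu=\mu(g_{n}^{-1}(E_{n}))=\mu(\cup_{k}B_{k})=\frac{1}{2}$, so no $\epsilon$--$\delta$ bound uniform in $n$ can exist.

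Worse, the target you set yourself (joint continuity of composition) is not merely harder than the paper's argument — it fails in $(\ACI,d)$, so no repair of the hard summand is possible. With the same $g_{n}$, let $f_{n}\in\IET(\S^{1})$ be translation by $\frac12$ on $E_{n}$ (note $E_{n}+\frac12=E_{n}$ for even $n$) and the identity elsewhere; then $d(f_{n},id)=\frac{1}{n}\to 0$, but for every $x$ in the set $\cup_{k}B_{k}$ of measure $\frac12$ one has $g_{n}(x)\in E_{n}$ and hence $\d(f_{n}(g_{n}(x)),x)\ge \frac12-\frac1n$, giving $\tilde{d}_{1}(f_{n}\circ g_{n},id)\ge\frac14-\frac{1}{2n}$, while $f\circ g=id$. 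So $f_{n}\to id$, $g_{n}\to id$, but $f_{n}\circ g_{n}\not\to id$. The paper never attempts joint continuity: its proof of the proposition consists of the isometry of inversion together with continuity, for each \emph{fixed} $h$, of the translations $L_{h}$ and $R_{h}$ (reduced to $\tilde{L}_{h}$, $\tilde{R}_{h}$ and proved with the tools cited above), i.e.\ continuity of multiplication separately in each variable. You should restructure your argument accordingly — prove continuity of $L_{h}$, $R_{h}$ and of inversion — rather than pursue the two ``hard summands,'' which your own decomposition shows would require exactly the false uniform-integrability statement.
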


Note that the subgroup \(\AIET(\S^{1}) = \langle \IET(\S^{1}), \PL_{+}(\S^{1}) \rangle < \ACI\) verifies that \((\AIET(\S^{1}),d)\) is also a topological group.

The topology induced in \(\IET\) by \(d\), and \(\tilde{d}_{1}\) verifies the hypothesis of Novak's theorem.

The main result of \cite{Novak1} is not valid in \((\ACI,d)\), for example via differential flow we can construct \(\rho:\R \to \ACI\) with a finite numbers of fix points  (Example \ref{ejemplo con dos discontinuidades en un intervalo}). 

Also this result is not valid in \((\AIET(\S^{1}),d)\). More precisely the existence of exotic circles in \(\PL_{+}(S^{1})\) (\cite{Minakawa}) implies the existence of continuous homomorphisms \(\rho: \R \to (\AIET(\S^{1}),d)\) such that are not conjugated to a standard torus action. 

But we can adapt this result to our context. This theorem can be interpreted in terms of domains. We will use the notation of \cite{Dahmani1} to work in domains other than [0,1).

\begin{defi} \label{inclusion in domain}
A domain \(\mathcal{D}\) is a disjoint finite union of circles \(\R /l\Z\) and semi-open bounded intervals \([\alpha,\beta)\).
Each component of \(\mathcal{D}\) has a natural metric and orientation. 

We consider any domain \(\mathcal{D}\) with an order in his components and denote by \(\mathcal{I}_{j}\) the canonical inclusion of \(j\)-th component in \(\mathcal{D}\).

An interval exchange transformation \(h: \mathcal{D} \to \mathcal{D}^{\prime}\) between two domains \(\mathcal{D},\, \mathcal{D}^{\prime}\) is a bijection function which is piecewise isometric, orientation-preserving, right continuous and with only finitely many discontinuity points.
The set of all interval exchange transformation between  \(\mathcal{D},\, \mathcal{D}^{\prime}\) is denote by \(\IET(\mathcal{D},\mathcal{D}^{\prime})\). Also we denote \(\IET(\mathcal{D}) = \IET(\mathcal{D},\mathcal{D})\).
\end{defi}

Note that by the previous definition \(\IET([0,1)) = \IET\).

In this article we work with domains whose components are circles.

The result of \cite{Novak1}  can be viewed as: for any continuous homomorphism $\tau: \R \to \IET$ there exist a domain \(\mathcal{D}\) and \(f \in \IET(\S^{1},\mathcal{D})\) such that \(f \circ \tau(t) \circ f^{-1} \in \IET(\mathcal{D})\) is continuous for any \(t \in \R\).

We adapt this result to the group \((\ACI,d)\). In particular this show that this property does not need the piecewise isometric condition.

\begin{teo} \label{main theorem}
Let $\rho:\R \to \ACI$ a continuous homomorphism. Then there exist a domain $\mathcal{D}$ and an interval exchange transformation $f \in \IET(\S^{1},\mathcal{D})$ such that the map $\hat{\rho}:\R \to \mathcal{P}\AC_{+}(\mathcal{D})$ defined by $\hat{\rho}(t) = f \circ \rho(t) \circ f^{-1}$ is a continuous homomorphism and verifies \(\hat{\rho}(t)\) is continuous for any \(t\in \R\).
\end{teo}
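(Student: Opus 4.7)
The strategy is to identify a finite set $\Sigma\subset\S^{1}$ containing all ``essential'' discontinuities shared by the family $\{\rho(t)\}_{t\in\R}$, and to build $\mathcal{D}$ by cutting $\S^{1}$ at the points of $\Sigma$. The interval exchange $f:\S^{1}\to\mathcal{D}$ will be the tautological cut that separates the right- and left-neighborhoods at each $p\in\Sigma$. To make this work one must show (i) that $\Sigma$ is finite, (ii) that every discontinuity of every $\rho(t)$ lies in $\Sigma$, and (iii) that the one-sided images $\rho(t)(p^{+})$ and $\rho(t)(p^{-})$ for $p\in\Sigma$ also lie in $\Sigma$; these three facts together will guarantee that $\hat\rho(t)=f\circ\rho(t)\circ f^{-1}$ has no discontinuity on $\mathcal{D}$.

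First I would carry out a local analysis near $t=0$. Continuity of $\rho$ in the metric $d$ of Definition \ref{distance in ACI} forces $\rho(t)\to\mathrm{id}$ in $L^{1}$; combined with the homomorphism relation $\rho(kt)=\rho(t)^{k}$, this should yield a neighborhood $U=(-\delta,\delta)$ of $0$ and an integer $N$ bounding the number of discontinuities of $\rho(t)$ for $t\in U$. The $L^{1}$-type metric does not directly control jump counts, so the key input here is the group law: an unbounded family of small jumps in $\rho(t_{n})$ with $t_{n}\to 0$ would, upon iterating $k$ times with $kt_{n}$ held near a fixed value, produce a configuration of jumps incompatible with the continuity of $\rho$ at the accumulation point.

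With such a bound, define $\Sigma$ as the closure of the set of $x\in\S^{1}$ arising as limits of discontinuities of $\rho(t_{n})$ for some sequence $t_{n}\to 0$. Then $|\Sigma|\leq N$, and using $\rho(t+s)=\rho(t)\circ\rho(s)$ I would show that $\Sigma$ is stable under both the right- and left-limit actions of each $\rho(t)$, and that every discontinuity of every $\rho(t)$ lies in $\Sigma$: otherwise composing with $\rho(s)$ for $s\to 0$ would produce a new accumulation of discontinuities outside $\Sigma$, contradicting its definition. Enumerating $\Sigma=\{p_{1}<p_{2}<\cdots<p_{n}\}$ cyclically, let $\mathcal{D}=\bigsqcup_{i}[p_{i},p_{i+1})$ with the inherited metric and orientation, and let $f:\S^{1}\to\mathcal{D}$ be the canonical cut, which is by construction an element of $\IET(\S^{1},\mathcal{D})$.

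Finally I would check that $\hat\rho(t)=f\circ\rho(t)\circ f^{-1}$ is continuous on $\mathcal{D}$: at each potential cut point, the right- and left-limits of $\rho(t)$ now sit in distinct components of $\mathcal{D}$ by invariance, so within each component $\hat\rho(t)$ is continuous, and the homomorphism property for $\hat\rho$ together with the continuity in $t$ follows from Proposition \ref{ACI is a topological group} since conjugation by the fixed $f$ is a continuous operation. The main obstacle is the uniform bound $N$ on the number of discontinuities near $0$: the weakness of the $L^{1}$ metric $d$ means a naive argument fails, and one must use the homomorphism structure quantitatively to exclude the possibility that many small-jump discontinuities accumulate as $t\to 0$. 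The secondary difficulty is bookkeeping the left-limit versus right-limit images at cut points so that the invariance claim (iii) holds exactly as needed to kill the discontinuity after passing to $\mathcal{D}$.
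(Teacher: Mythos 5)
Your step (ii) --- ``every discontinuity of every $\rho(t)$ lies in $\Sigma$'' --- is false, and it is precisely the point where the real work lies. The set $\Sigma$ you define (limit points of $BP_{0}(\rho(t_{n}))$ for $t_{n}\to 0$; the paper's set $B$) only \emph{attracts} the discontinuities as $t\to 0$: for a fixed small $t\neq 0$ the map $\rho(t)$ in general has discontinuity points strictly inside the intervals cut out by $\Sigma$, which merely converge to $\Sigma$ as $t\to 0$. The paper makes this explicit: for $t$ in the set $T$ of local maxima of $\sharp\circ\rho$ one has $B\subset BP_{0}(\rho(t))$, but $BP_{0}(\rho(t))\setminus B$ is typically non-empty (its points are endpoints of the small ``type 1'' intervals, and they are mapped \emph{into} $B$), and Example \ref{ejemplo con dos discontinuidades en un intervalo} even exhibits $\#(\mathring{B_{1}}\cap BP_{0}(\rho(t)))=2$. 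Consequently, cutting $\S^{1}$ at $\Sigma$ and conjugating by the tautological cut map does not remove these interior jumps, and your final continuity argument (``the one-sided limits sit in distinct components, so within each component $\hat\rho(t)$ is continuous'') does not follow --- if the two one-sided limits at an interior point of a component land in different components, the conjugated map is discontinuous there, not continuous.

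There is also a structural error in your choice of $\mathcal{D}$: taking $\mathcal{D}=\bigsqcup_{i}[p_{i},p_{i+1})$ with the inherited metric fails already for the model case of a one-parameter standard torus action. There the cut points are the $\beta_{j}$, and on each cut piece $\rho(t)$ acts as a rotation, i.e. as an exchange of two subintervals, which is discontinuous as a map of the half-open interval although continuous as a map of a circle. This is why the paper's domain has \emph{circle} components, and why the conjugating map $f$ is not a mere cut: one must also re-glue the intervals $B_{i}$, possibly several of them into a single circle, in the cyclic order dictated by the transition combinatorics of the small intervals (the maps $\tau$ and $\sigma$ built from where $\rho(\pm t_{q})$ sends the endpoints $b_{i}$, and their orbit decomposition). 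That re-gluing is exactly what converts the interior discontinuities $a\in BP_{0}(\rho(t))\setminus B$ --- whose images and left limits lie in $B$ --- into continuity points of $f\circ\rho(t)\circ f^{-1}$, first along a sequence $t_{q}\to 0$ in $T$ and then for all $t$ via Proposition \ref{continuidad en una sucesion implica continuidad en dominios}. Your proposal is missing this combinatorial gluing step entirely, so the construction as described cannot yield the theorem.
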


We prove this theorem in section 5,

In the section 6 we exhibit several examples to analyze the hypotheses of the main theorem.

\section{Preliminaries and Notations}

\subsection{Notations for circles}

\begin{nota} \label{distance S1}
We denote $(\S^{1})_{l} = \R/l\Z$, and $\pi_{l}:\R\to(\S^{1})_{l}$ the canonical projection.
Let $(\d)_{l}$ the distance induced by $(\pi)_{l}$ in \((\S^{1})_{l}\) and $(\mu)_{l}$ the measure generated in $(\S^{1})_{l}$ by $(\d)_{l}$.
In the case of unit circle we omit the sub index \(l=1\).
\end{nota}

We define the intervals in \((\S^{1})_{l}\) with the counterclockwise orientation. 

\begin{nota} \label{notation intervals}
Let $a,b \in (\S^{1})_{l}$ with $a \neq b$, we denote by $[a,b] = \pi_{l}([\hat{a},\hat{b}])$, where $\pi_{l}(\hat{a}) = a,\ \pi_{l}(\hat{b}) = b$ and $\pi_{l}([\hat{a},\hat{b}]) \subsetneq (\S^{1})_{l}$.
We will use the notation $(a,b)$, $[a,b)$ and $(a,b]$ in the same way.

For any interval $I = [a,b) \subset \R$, with $b - a = l$ we will denote $\iota_{I}:I \to (\S^{1})_{l}$ the canonical bijection (i.e. $\iota_{I}(x) = \pi_{l}(x - a)$). 

Note that \(\iota_{I} \in \IET(I,(\S^{1})_{l})\) and determines, a canonical bijection \(\tilde{\iota}_{I}: \IET(I) \to \IET((\S^{1})_{l})\) define by \(\tilde{\iota}(g)(x) = (\iota_{I} \circ g \circ (\iota_{I})^{-1})(x)\).
\end{nota}

The measure of intervals and the distance of their endpoints are related in the obvious way.

\begin{obs} \label{interval measure-distance endpoints}
Let $a,b \in \S^{1}$. Then $diam((a,b)) = \mu((a,b))$ in particular we have that $\d(a,b) = \mu([a,b])$ or $\d(a,b) = \mu([b,a])$.
Also, if  $\mu([a,b]) \leq 1/2$ then $\d(a,b) = \mu([a,b])$.
\end{obs}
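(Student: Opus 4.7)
The plan is to lift the situation to $\R$ via the projection $\pi:\R\to\S^{1}$ used to define the quotient metric and measure. Choose lifts $\tilde{a},\tilde{b}\in\R$ with $\pi(\tilde{a})=a$, $\pi(\tilde{b})=b$, and $0<\tilde{b}-\tilde{a}<1$. By the counterclockwise convention of Notation~\ref{notation intervals}, the arc $[a,b]$ is $\pi([\tilde{a},\tilde{b}])$ and thus has measure $\tilde{b}-\tilde{a}$, while the complementary arc $[b,a]$ is $\pi([\tilde{b},\tilde{a}+1])$ and has measure $1-(\tilde{b}-\tilde{a})$; in particular the two arc-measures add to $\mu(\S^{1})=1$, which is the identity driving the rest of the argument.

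From the definition of the quotient distance,
\[
\d(a,b)=\min_{k\in\Z}|\tilde{b}-\tilde{a}-k|=\min\bigl(\tilde{b}-\tilde{a},\,1-(\tilde{b}-\tilde{a})\bigr)=\min\bigl(\mu([a,b]),\mu([b,a])\bigr),
\]
which immediately gives the dichotomy $\d(a,b)\in\{\mu([a,b]),\mu([b,a])\}$. If in addition $\mu([a,b])\leq 1/2$, then the two measures summing to $1$ force $\mu([a,b])\leq\mu([b,a])$, so the minimum above is $\mu([a,b])$ and hence $\d(a,b)=\mu([a,b])$.

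For the diameter claim, any $x,y\in(a,b)$ lift to $\tilde{x},\tilde{y}\in(\tilde{a},\tilde{b})$ and satisfy $\d(x,y)\leq|\tilde{x}-\tilde{y}|<\tilde{b}-\tilde{a}=\mu((a,b))$, giving $\mathrm{diam}((a,b))\leq\mu((a,b))$; the reverse bound follows by letting $\tilde{x}\to\tilde{a}^{+}$ and $\tilde{y}\to\tilde{b}^{-}$ and invoking the previously established identity $\d=\mu$ for arcs of measure at most $1/2$ (in the case $\mu((a,b))>1/2$ one argues on the complementary arc, which realizes the same intrinsic length). There is no genuine obstacle here: the entire statement reduces to bookkeeping once the orientation convention is used to pin down which of the two arcs between $a$ and $b$ is named $[a,b]$ and which is $[b,a]$, and everything then follows directly from the defining properties of $\pi$, $\mu$, and $\d$ introduced in Notation~\ref{distance S1} and Notation~\ref{notation intervals}.
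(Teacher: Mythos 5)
The paper states this remark without proof (it is presented as immediate bookkeeping), so the only question is whether your argument is sound. Your treatment of the two distance claims is correct and is exactly the routine computation the paper leaves implicit: lifting $a,b$ to $\tilde a,\tilde b$ with $0<\tilde b-\tilde a<1$ gives $\mu([a,b])=\tilde b-\tilde a$, $\mu([b,a])=1-(\tilde b-\tilde a)$, and $\d(a,b)=\min_{k\in\Z}\vert\tilde b-\tilde a-k\vert=\min\bigl(\mu([a,b]),\mu([b,a])\bigr)$, which yields the dichotomy and the case $\mu([a,b])\leq 1/2$ at once.

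The weak point is the diameter clause. Your upper bound $\mathrm{diam}((a,b))\leq\mu((a,b))$ and the lower bound via $\tilde x\to\tilde a^{+}$, $\tilde y\to\tilde b^{-}$ work when $\mu((a,b))\leq 1/2$, but the sentence ``in the case $\mu((a,b))>1/2$ one argues on the complementary arc, which realizes the same intrinsic length'' is not a proof of anything: if $\mathrm{diam}$ is taken with respect to $\d$, then every subset of $\S^{1}$ has diameter at most $1/2$, so for an arc of measure, say, $0.9$ the asserted equality $\mathrm{diam}((a,b))=\mu((a,b))$ is simply false and no argument on the complementary arc can rescue it. The statement is only coherent if either one restricts to arcs of measure at most $1/2$ (the only situation in which the paper ever uses it) or one reads $\mathrm{diam}$ as the intrinsic (arc-length) diameter, in which case the equality with $\mu((a,b))$ is immediate from the same lift and needs no case distinction. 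You should either make that reading explicit or drop the $\mu>1/2$ case rather than paper over it; the rest of your argument stands as is.
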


\begin{defi} \label{half-interval}
We define the right half-interval of an interval \([a,b] \subsetneq \S^1$ as $[a,b]_{+} = \pi\left(\left[\frac{\hat{a}+ \hat{b}}{2},\hat{b}\right]\right)$, where $\pi(\hat{a}) =a, \pi(\hat{b}) = b$ and $\pi([\hat{a},\hat{b}]) \neq \S^{1}$.
Also, we define the left half-interval of $[a,b]$ as \\ $[a,b]_{-} = \pi\left(\left[\hat{a},\frac{\hat{a}+ \hat{b}}{2}\right]\right)$.

We will use $(a,b)_{+}$, $(a,b)_{-}$, $[a,b)_{+}$, $[a,b)_{-}$, $(a,b]_{-}$ and $(a,b]_{+}$ analogously. 
\end{defi}

\begin{figure}\centering
\includegraphics[scale = 0.6]{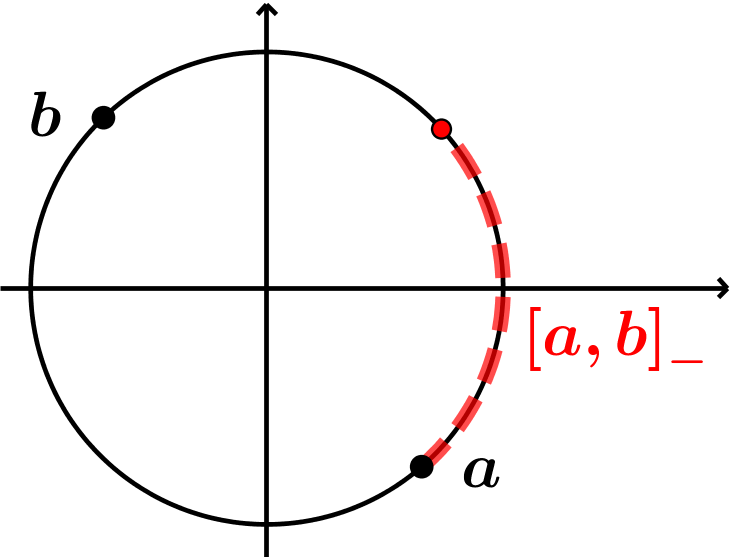}
\caption{Interval representation}
\end{figure}

Now we can define right continuous and absolutely continuous in the circles \((S^{1})_{l}\).

\begin{defi}\label{def rlim rcont abs cont}
Let \(f:(\S^{1})_{l} \to (\S^{1})_{l}\), \([a,b) \subsetneq (\S ^{1})_{l}\) and \(\hat{a},\, \hat{b} \in \R\) such that \(\pi_{l}(\hat{a}) = a,\, \pi_{l}(\hat{b}) = b\).
We say that:
\begin{enumerate}
\item The function \(f\) has right (left) limit in \(a\) if there exist \(c \in (\S^{1})_{l}\), \(\hat{c} \in \R\),  and \(\delta > 0\) such that \(\pi_{l}(\hat{c}) = c\), and \(((\iota_{l})\vert_{[\hat{c},\hat{c}+l)})^{-1} \circ f \circ (\iota_{l})\vert_{[\hat{a}-\delta,\hat{a} + \delta)}\) has right (left) limit in \(\hat{a}\). Also we say \(f\) is right (left) continuous in a if \(((\iota_{l})\vert_{[\hat{c},\hat{c}+l)})^{-1} \circ f \circ (\iota_{l})\vert_{[\hat{a}-\delta,\hat{a} + \delta)}\) is right (left) continuous in \(\hat{a}\).
\item The function \(f\) is absolutely continuous in \([a,b)\) if there exists \(\hat{c} \) such that \\ \(((\iota_{l})\vert_{[\hat{c},\hat{c}+l)})^{-1} \circ f \circ (\iota_{l})\vert_{[\hat{a},\hat{a} + l)}\) is absolutely continuous in \([\hat{a},\hat{b})\).
\end{enumerate}
\end{defi}

Since \(f \in \ACI\) preserves orientation in the intervals of continuity, it is right continuous, there is a finite  discontinuity points and is a bijection, then the definition of right and left limits can be expressed in terms of balls.

\begin{nota}
Let $p \in \S^{1}$ and $\delta \in (0,\frac{1}{2})$ we denote the ball of center $p$ and radius $\delta$ by $B(p,\delta) = \{x \in \S^{1} : \d(x,p) < \delta\}$. 
Since $B(p,\delta) = (a,b)$ for some $a,b \in \S^{1}$ we define $B(p,\delta)_{+} = [a,b)_{+}$, $B^{*}(p,\delta)_{+} = (a,b)_{+}$, $B(p,\delta)_{-} = (a,b]_{-}$ and $B^{*}(p,\delta)_{-} = (a,b)_{-}$.
\end{nota}

\begin{obs}
Let \(f \in \ACI\) and \(a \in (\S^{1})_{l}\).

The right limit verifies \(\displaystyle \lim_{x \to a^{+}} f(x) = b\) if only if for any \(\epsilon > 0\) exists \(\delta > 0\) such that \(f(B(a,\delta)_{+}) \subset B(b,\epsilon)_{+}\).

The left limit verifies \(\displaystyle \lim_{x \to a^{-}} f(x) = b\), if only if for any \(\epsilon > 0\) exists \(\delta > 0\) such that \(f(B^{*}(a,\delta)_{-}) \subset B^{*}(b,\epsilon)_{-}\).
\end{obs}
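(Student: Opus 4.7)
The plan is to unfold the chart-based Definition~\ref{def rlim rcont abs cont} and reduce both equivalences to standard facts about orientation-preserving continuous injections on real intervals, using the three structural features of $f\in\ACI$: finiteness of the discontinuity set (item~2 of Definition~\ref{interval def of ACI}), absolute continuity on every interval of continuity (item~3), and the image-of-interval identity (item~4). I also use the observation, following Definition~\ref{interval def of ACI}, that $f$ preserves orientation on every arc of continuity. Throughout I identify small half-balls with small lifted arcs via the chart $\iota_l$: for $\delta\in(0,l/2)$, the half-ball $B(a,\delta)_+$ corresponds to $\pi_l([\hat a,\hat a+\delta))$ and $B^*(a,\delta)_-$ to $\pi_l((\hat a-\delta,\hat a))$, and analogously at $b$, where $\hat a,\hat b\in\R$ are chosen lifts.

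For the right-limit case, item~2 provides $\delta_0\in(0,l/2)$ such that $f$ has no discontinuity in the arc $\pi_l((\hat a,\hat a+\delta_0))$; right-continuity at $a$ (item~1) forces $b=f(a)$ whenever the right limit exists. Continuity, injectivity and orientation preservation on this arc imply that, for every $\delta\in(0,\delta_0)$, $f(B(a,\delta)_+)$ is an arc of the form $\pi_l([\hat b,\hat b+\Delta(\delta)))$, and the chart-based definition of $\lim_{x\to a^+}f(x)=b$ translates precisely into $\Delta(\delta)\to 0$ as $\delta\to 0^+$. The equivalence with the inclusion $f(B(a,\delta)_+)\subset B(b,\epsilon)_+$ is then immediate: given $\epsilon$, choose $\delta$ so that $\Delta(\delta)<\epsilon$; conversely, the inclusion directly bounds the chart-distance between $f(x)$ and $b$ for $x\in(a,a+\delta)$.

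For the left-limit case, choose $\delta_0\in(0,l/2)$ so that $f$ is continuous on $\pi_l([\hat a-\delta_0,\hat a))$. Because $f$ is injective, continuous and orientation-preserving on this arc, its image is an arc $\pi_l([f(a-\delta_0),b))$ whose right endpoint $b=\lim_{x\to a^-}f(x)$ is precisely the left limit in the chart sense. Restricting $\delta<\delta_0$ then gives $f(B^*(a,\delta)_-)=\pi_l((c_\delta,b))$ for some $c_\delta$ with $c_\delta\to b$ as $\delta\to 0^+$, which is exactly the content of the inclusion $f(B^*(a,\delta)_-)\subset B^*(b,\epsilon)_-$ for small $\delta$. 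The only real care needed is the chart identification of half-balls with arcs and, in the left case, the fact that orientation preservation on the continuity arc forces the image to abut the left limit at the missing endpoint; beyond this bookkeeping no substantive obstacle arises.
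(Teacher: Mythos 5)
Your proposal is correct and follows essentially the same route as the paper, which states this remark without a detailed argument, justifying it only by the preceding sentence that invokes exactly the ingredients you use: orientation preservation on intervals of continuity (item 4 of Definition~\ref{interval def of ACI}), right continuity, finiteness of the discontinuity set, and bijectivity. Your write-up simply fills in the chart bookkeeping (identifying $B(a,\delta)_{+}$ and $B^{*}(a,\delta)_{-}$ with lifted arcs and noting that the image of a one-sided continuity arc is an arc abutting the one-sided limit on the correct side), which is the intended content of the paper's one-line justification.
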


We define a distance in domains in the same way as in circles.

\begin{defi}
For a domain \(D = \uplus_{i \leq n} (\S^{1})_{\lambda_{i}}\) we define the distance \(\d_{D}\) by 

\(\d_{D}(x,y)= \left\{ \begin{matrix}
\d_{\lambda_{j}}(x,y) & \text{ if there exists } j \text{ such that } x,y \in (\S^{1})_{\lambda_{j}}, \\
\sum_{i \leq n} \lambda_{i} & \text{ in other case. }
\end{matrix} \right.\)
\end{defi}

\subsection{Discontinuity points of the maps of $\ACI$}

Recall that in this work any domain is an union of circles.

From now on, for the sake of simplicity we denote the points \(x\) of \(\S^{1}\) with his canonical identification in \([0,1)\) via \(\iota^{-1}\).

The points of discontinuity are crucial for proving several results in this work.
In this subsection we will study some properties and notation associated to them.

Since any map $f \in \ACI$ preserves orientation in the intervals of continuity (item 4 of Definition \ref{interval def of ACI}) the order of points in each one is preserved.

\begin{defi}
Let $k \in \N, k \geq 3$, a $k$-tuple in $\S^{1}$, $v = (x_{1},...,x_{k})$ is a ordered k-tuple if and only if for all 3-tuple $(i_{1},i_{2},i_{3})\in \N^{3}$ with $1 \leq i_{1} < i_{2} < i_{3} \leq k$ we have $x_{i_{2}} \in (x_{i_{1}},x_{i_{3}})$.
\end{defi}

\begin{obs} \label{continuity in ordered tuples}
Let $f \in \ACI$.
\begin{enumerate}
\item If $(x_{1},x_{2},x_{3})$ is an ordered $3$-tuple and $f$ is continuous on $[x_{1},x_{3}]$ then $(f(x_{1}),f(x_{2}),f(x_{3}))$ is an ordered 3-tuple.
\item Applying the previous item to $f^{-1}$, if $(f(x_{1}),f(x_{2}),f(x_{3}))$ is an ordered 3-tuple and $(x_{1},x_{2},x_{3})$ is not, then $f^{-1}$ is not continuous in $[f(x_{1}),f(x_{3})]$ so $f$ is not continuous in $[x_{1},x_{3}]$.
\item In summary $f$ is continuous in $[a,b)$ if and only if $f^{-1}$ is continuous in $[f(a),f(b))$.
\end{enumerate}
\end{obs}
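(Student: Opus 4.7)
The proof plan is essentially unpacking Definition \ref{interval def of ACI}, especially item 4, together with the bijectivity of $f$; the hints inside the statement already indicate the structure.

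For item 1, I would argue as follows. Assume $(x_1,x_2,x_3)$ is ordered and $f$ is continuous on $[x_1,x_3]$. Since the discontinuity set of $f$ is finite, continuity on the closed interval means continuity on some $[x_1,x_3+\epsilon)$, and in particular on $[x_1,x_3)$. By item 4 of Definition \ref{interval def of ACI}, $f([x_1,x_3)) = [f(x_1),f(x_3))$. Because $x_2 \in (x_1,x_3) \subset [x_1,x_3)$, we get $f(x_2) \in [f(x_1),f(x_3))$, and since $f$ is a bijection with $x_2 \neq x_1$, actually $f(x_2) \in (f(x_1),f(x_3))$. This is exactly what it means for $(f(x_1),f(x_2),f(x_3))$ to be an ordered $3$-tuple.

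For item 2, I would note that $f^{-1} \in \ACI$ by definition, so item 1 applies to it. If, toward a contradiction, $f^{-1}$ were continuous on $[f(x_1),f(x_3)]$, then item 1 applied to $f^{-1}$ and the ordered tuple $(f(x_1),f(x_2),f(x_3))$ would force $(f^{-1}(f(x_1)), f^{-1}(f(x_2)), f^{-1}(f(x_3))) = (x_1,x_2,x_3)$ to be ordered, contradicting the hypothesis. Hence $f^{-1}$ fails to be continuous on $[f(x_1),f(x_3)]$. The final assertion of item 2 (``so $f$ is not continuous in $[x_1,x_3]$'') then follows from item 3, which I would handle independently.

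For item 3, I would prove both directions directly from item 4 of Definition \ref{interval def of ACI}. If $f$ is continuous on $[a,b)$, then $f([a,b)) = [f(a),f(b))$ and $f$ restricted to $[a,b)$ is an orientation-preserving bijection onto $[f(a),f(b))$; the inverse of such a bijection is automatically continuous on $[f(a),f(b))$. The converse is identical with the roles of $f$ and $f^{-1}$ swapped, using that $f^{-1} \in \ACI$. There is no real obstacle here, the entire statement is a bookkeeping consequence of the fact that the axioms of $\ACI$ force intervals of continuity to be mapped bijectively and orientation-preservingly onto intervals of the same form, so only a careful reading of Definition \ref{interval def of ACI} and attention to the closed-versus-half-open interval distinction (handled via the finiteness of the discontinuity set) are required.
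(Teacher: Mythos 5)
The paper offers no proof of this remark, treating it as an immediate consequence of Definition \ref{interval def of ACI} (item 4) together with bijectivity, and your unpacking is correct and follows exactly that intended route. The only step worth making explicit is the last clause of item 2: item 3 only yields continuity of $f^{-1}$ on the half-open arc $[f(x_{1}),f(x_{3}))$, so continuity of $f^{-1}$ at the endpoint $f(x_{3})$ must be supplied by the same device you already used in item 1, namely extending the interval of continuity slightly past $x_{3}$ (possible since $BP_{0}(f)$ is finite and $x_{3}$ is a continuity point) and applying item 3 to the extended interval.
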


\begin{defi} \label{Notation BP_0 and delta}
Let \(\mathcal{D}, \mathcal{D}^{\prime}\) be two domains and \(f:\mathcal{D} \to \mathcal{D}^{\prime}\) a function  with finite number of discontinuity points. We define $BP_{0}(f) = \{ x \in \mathcal{D} :  f \mbox{ is not continuous at } x \}$, \(\sharp f = \# BP_{0}(f)\) and $Cont(f) = \mathcal{D} \setminus BP_{0}(f)$. 

Also, for $f \in \ACI$, we define:
\begin{itemize}
\item  $\delta_{f}(p) = \min\{1,\d(BP_{0}(f) \setminus \{ p \},p)\}$ and $\delta_{f} = \min (\{1,\delta_{f}(p) : p \in BP_{0}(f)  \} )$.
\item In the case of \(\mathcal{D} = \S^{1}\) and $BP_{0}(f) \neq \emptyset$, we use the following notation  $BP_{0}(f) = \{p_{1},...,p_{k}\}$ where $(p_{1},...,p_{k})$ is an ordered k-tuple and $0 \in (p_{k},p_{1}]$. Let $I_{f}(i) = [p_{i},p_{i+1})$ for $i < k$ and $I_{f}(k) = [p_{k},p_{1})$ (see figure 2).
The intervals $I_{f}(j)$ are maximal intervals of continuity of $f$. In the case that $f$ is a continuous function we will denote $I_{f}(0)= \S^{1}$.
\end{itemize}

\begin{figure}\centering
\includegraphics{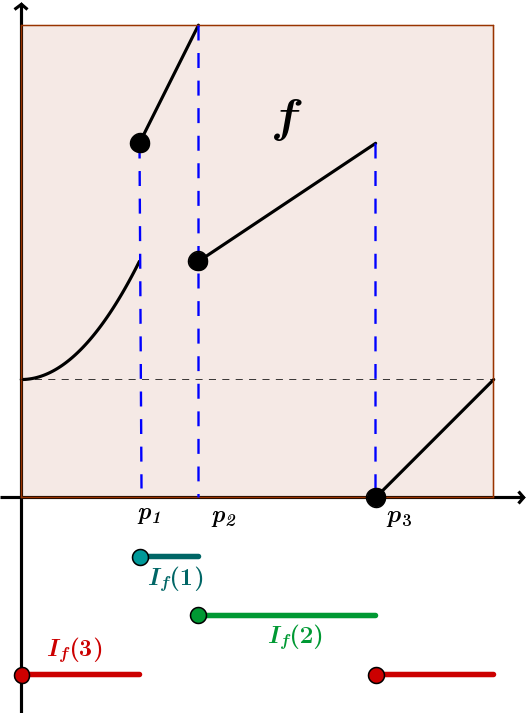}
\caption{Example of a map $f \in \ACI$}
\end{figure}

\end{defi}

\begin{obs}
Let $f \in \mathcal{PAC}_{+}((\S^{1})_{l})$ a discontinuous function. Then $3 \leq \sharp f$ and $0 < \delta_{f} < l$.
\end{obs}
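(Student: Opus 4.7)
The plan is to split the statement into the combinatorial claim $\sharp f \geq 3$ and the metric claim $0 < \delta_f < l$, with the metric bounds being easy consequences of the combinatorial one.

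To prove $\sharp f \geq 3$, I would argue by contradiction and rule out $\sharp f = 1$ and $\sharp f = 2$. The key observation in both cases is the following: suppose $p \in BP_0(f)$ and that there exists $a \neq p$ such that $f$ is continuous on the arc $[a, p) \subsetneq (\S^1)_l$. Property (4) of Definition \ref{interval def of ACI} then gives $f([a, p)) = [f(a), f(p))$; transported to $\R$ via the charts $\iota$ of Notation \ref{notation intervals}, $f|_{[a, p)}$ becomes a continuous bijection between two half-open real intervals, hence monotone. The orientation-preserving property recorded in the paragraph after Definition \ref{interval def of ACI} rules out the decreasing branch, whence $\lim_{x \to p^-} f(x) = f(p)$. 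Combined with right-continuity at $p$ (property (1)), this makes $f$ continuous at $p$, contradicting $p \in BP_0(f)$. This kills $\sharp f = 1$ (take any $a \neq p$ in $(\S^1)_l$) and $\sharp f = 2$ (take $a = p_2$ where $BP_0(f) = \{p_1, p_2\}$, so that $[p_2, p_1)$ is automatically a maximal interval of continuity). Since $f$ is discontinuous we have $\sharp f \geq 1$, and the above forces $\sharp f \geq 3$.

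For the metric bounds, $\delta_f > 0$ is immediate: $BP_0(f)$ is a finite subset of $(\S^1)_l$, so the pairwise distances of distinct points are positive, and $\delta_f$ is the minimum of $1$ and finitely many such positive numbers. For $\delta_f < l$, the $\sharp f \geq 3$ discontinuities partition $(\S^1)_l$ into at least three arcs whose lengths sum to $l$; the shortest such arc has length at most $l/3 \leq l/2$, in which range arc length coincides with circular distance by Remark \ref{interval measure-distance endpoints} (adapted to $(\S^1)_l$). Hence two consecutive discontinuities lie at circular distance at most $l/3$, yielding $\delta_f \leq \min(1, l/3) < l$.

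The main subtle point is the continuity argument in the first paragraph: property (4) determines the image arc $[f(a), f(p))$ only as a set, so one must combine it with orientation preservation and the monotonicity of continuous bijections between half-open real intervals to conclude that the left limit of $f$ at $p$ is precisely $f(p)$. Once this is in hand, everything else reduces to counting or finiteness.
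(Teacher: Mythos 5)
The metric half of your proposal is fine, and the plan of ruling out $\sharp f \in \{1,2\}$ is the right one, but the key step of the combinatorial half has a genuine gap: you invoke item (4) of Definition \ref{interval def of ACI} for an interval $[a,p)$ whose right endpoint $p$ is itself a discontinuity point, to get $f([a,p)) = [f(a),f(p))$ and hence $\lim_{x\to p^{-}}f(x) = f(p)$. Notice that nothing in this step uses $\sharp f \leq 2$: for an arbitrary $f\in\ACI$ and an arbitrary $p\in BP_{0}(f)$ you may pick $a$ inside the maximal continuity interval ending at $p$, and the identical reasoning would give continuity at $p$. So, read this way, your lemma proves $BP_{0}(f)=\emptyset$ for every $f\in\ACI$, which is false (any $3$-interval exchange, or the standard torus action of Definition \ref{standard torus action}, is discontinuous) and is also incompatible with Remark \ref{left limit in BP part 1}, where the paper records that $f(I_{f}(i)) = I_{f^{-1}}(j)$ and that $\lim_{x\to b^{-}}f(x)$ is a discontinuity point of $f^{-1}$, in general different from $f(b)$. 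The conclusion is that condition (4) cannot be applied across a point of $BP_{0}(f)$: its intended content, as stated right after Definition \ref{interval def of ACI}, is orientation preservation on intervals of continuity, not a link between the left limit and the value of $f$ at a discontinuity. Your ``main subtle point'' is exactly the place where the argument breaks.

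The statement itself is true and can be proved along lines close to what you attempted, but using left limits instead of $f(p)$: on each maximal interval of continuity $I_{f}(i)=[p_{i},p_{i+1})$, continuity, injectivity and orientation preservation of the restriction give $f(I_{f}(i)) = [f(p_{i}),q_{i})$ with $q_{i} = \lim_{x\to p_{i+1}^{-}}f(x)$, and discontinuity at $p_{i+1}$ (together with right continuity) says precisely that $q_{i}\neq f(p_{i+1})$. Since $f$ is a bijection, these arcs are pairwise disjoint and cover $(\S^{1})_{l}$. If $\sharp f = 1$, the single arc must close up, forcing $q_{1}=f(p_{1})$; if $\sharp f = 2$, two disjoint half-open arcs covering the circle must interlock, forcing $q_{1}=f(p_{2})$ and $q_{2}=f(p_{1})$; in either case some $p_{i}$ would be a continuity point, a contradiction, so $\sharp f\geq 3$. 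Your argument for $0<\delta_{f}<l$ can then be kept verbatim. (The paper states this remark without proof, so there is no argument of the paper to compare with; the above is the natural repair of yours.)
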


The composition of continuous functions is continuous, so we can relate $BP_{0}(f)$ and $BP_{0}(g)$ with $BP_{0}(f \circ g)$.

\begin{obs} \label{Discontinuidad de la inversa}
Let \(\mathcal{D}, \mathcal{D}^{\prime}, \mathcal{D}^{\prime \prime} \) be domains and $g:\mathcal{D} \to \mathcal{D}^{\prime}, f:\mathcal{D}^{\prime} \to \mathcal{D}^{\prime \prime}$ bijections with finite discontinuity points.
Then:
\begin{enumerate}
\item $BP_{0}(f \circ g) \subset BP_{0}(g) \cup g^{-1}(BP_{0}(f))$.
\item If $x \in BP_{0}(g)$ and $g(x) \in \mbox{Cont}(f)$, then $x \in BP_{0}(f \circ g)$. Also if $x \in \mbox{Cont}(g)$ and $g(x) \in BP_{0}(f)$, then  $x \in BP_{0}(f \circ g)$.

In summary, $$\left[BP_{0}(g) \cap g^{-1}(Cont(f)) \right] \cup \left[Cont(g) \cap g^{-1}(BP_{0}(f)) \right]  \subset BP_{0}(f \circ g).$$

Then, the cardinal of $BP_{0}(f \circ g)$ verifies the follow inequality 
$$ \vert\#  BP_{0}(f) - \# BP_{0}(g) \vert \leq \#BP_{0}(f \circ  g) \leq \# BP_{0}(f) + \# BP_{0}(g).$$

\item In the case $g = f^{-1}$ it holds $\# BP_{0}(f)  =  \# BP_{0}(f^{-1})$. More precisely $BP_{0}(f^{-1}) = f(BP_{0}(f))$.

\end{enumerate} 
\end{obs}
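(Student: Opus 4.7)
The plan is to handle the three items in order, each by a contrapositive argument that leverages the finiteness of the discontinuity sets to convert ``continuous at a point'' into ``continuous on an open neighborhood.''

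For item $(1)$, the argument is the standard fact that a composition of continuous functions is continuous: if $x \notin BP_{0}(g) \cup g^{-1}(BP_{0}(f))$ then $g$ is continuous at $x$ and $f$ is continuous at $g(x)$, whence $f \circ g$ is continuous at $x$; contrapositively, $BP_{0}(f \circ g) \subset BP_{0}(g) \cup g^{-1}(BP_{0}(f))$, and the upper bound $\#BP_{0}(f \circ g) \leq \#BP_{0}(f) + \#BP_{0}(g)$ drops out because $g$ is a bijection, so $\vert g^{-1}(BP_{0}(f))\vert = \#BP_{0}(f)$.

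For item $(2)$, I would isolate the following key observation: since $BP_{0}(f)$ is finite, whenever $g(x) \in \mbox{Cont}(f)$ the map $f$ is continuous on an entire open interval $V \ni g(x)$, and a continuous bijection between intervals is strictly monotone, so $f\vert_{V}\colon V \to f(V)$ is a homeomorphism onto the open set $f(V)$. Now suppose $x \in BP_{0}(g)$ and $g(x) \in \mbox{Cont}(f)$, and argue by contradiction: if $f \circ g$ were continuous at $x$ then for $y$ near $x$ we would have $f(g(y)) \in f(V)$, so global injectivity of $f$ forces $g(y) \in V$, and composing with $(f\vert_{V})^{-1}$ gives $g(y) \to g(x)$, contradicting $x \in BP_{0}(g)$. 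The symmetric sub-case is analogous: take an open neighborhood $U$ of $x$ on which $g$ is continuous, transfer it through the homeomorphism $g\vert_{U}$ to an open neighborhood $U' \ni g(x)$, and read off continuity of $f$ at $g(x)$ from the formula $f\vert_{U'} = (f \circ g) \circ (g\vert_{U})^{-1}$.

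Finally, for item $(3)$, I would apply item $(2)$ with $g \leftarrow f$ and $f \leftarrow f^{-1}$ to the identity $f^{-1} \circ f = \mbox{id}$: if there were $x \in BP_{0}(f)$ with $f(x) \in \mbox{Cont}(f^{-1})$, then item $(2)$ would place $x$ in $BP_{0}(\mbox{id}) = \emptyset$, contradiction; hence $f(BP_{0}(f)) \subset BP_{0}(f^{-1})$, and swapping the roles of $f$ and $f^{-1}$ yields the reverse inclusion together with the cardinality equality. The lower bound $\vert \#BP_{0}(f) - \#BP_{0}(g) \vert \leq \#BP_{0}(f \circ g)$ then follows by applying the upper bound to the identities $f = (f \circ g) \circ g^{-1}$ and $g = f^{-1} \circ (f \circ g)$ and invoking item $(3)$ to trade $\#BP_{0}(g^{-1})$ and $\#BP_{0}(f^{-1})$ for $\#BP_{0}(g)$ and $\#BP_{0}(f)$. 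The only non-routine step in the whole argument is the local-homeomorphism claim in item $(2)$, which rests on monotonicity of continuous injections on intervals; everything else is set-theoretic bookkeeping.
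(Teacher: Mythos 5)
The paper gives no proof of this Remark: it is stated as an observation, justified only by the preceding sentence that compositions of continuous maps are continuous, which is exactly your item (1). Your write-up is correct and supplies the details the paper leaves implicit. The genuinely non-routine ingredient is the one you isolate: finiteness of $BP_{0}$ upgrades continuity at a point to continuity on a whole neighbourhood, and a continuous injection of an arc is strictly monotone, hence a homeomorphism onto an open arc; with that, your two contradiction arguments (global injectivity of $f$ forcing $g(y)\in V$, and the identity $f\vert_{U'}=(f\circ g)\circ(g\vert_{U})^{-1}$) correctly give both halves of item (2), and item (3) follows as you say by applying item (2) to $f^{-1}\circ f=\mathrm{id}$ in both directions. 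Two small points are worth recording. First, the openness of $f(V)$ and $g(U)$ uses that every point has a two-sided neighbourhood, i.e. that the components of the domains are circles; for a half-open interval component the first assertion of item (2) can actually fail at the initial endpoint, but the paper explicitly restricts to domains all of whose components are circles, so this is harmless. Second, your derivation of the lower bound $\vert \#BP_{0}(f)-\#BP_{0}(g)\vert \leq \#BP_{0}(f\circ g)$ from $f=(f\circ g)\circ g^{-1}$ and $g=f^{-1}\circ(f\circ g)$ quietly uses $\#BP_{0}(g^{-1})\leq \#BP_{0}(g)$ and $\#BP_{0}(f^{-1})\leq \#BP_{0}(f)$, i.e. part of item (3), whose proof as you give it presupposes that the inverses also have finitely many discontinuity points; the needed inclusion $BP_{0}(g^{-1})\subset g(BP_{0}(g))$ in fact follows from finiteness of $BP_{0}(g)$ alone by the same local-homeomorphism argument, so the step is easily made airtight, and it is in any case automatic in the paper's setting, where all maps and their inverses belong to the relevant groups.
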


The last claim of the remark 2.10 can be adapted to $I_{f}(i)$.

\begin{obs}\label{left limit in BP part 1}
Since $I_{f}(i)$ is a maximal interval of continuity of $f$, then $f(I_{f}(i))= I_{f^{-1}}(j)$ for some j. Moreover $\lim_{x \to b^{-}} f(x) \in BP_{0}(f^{-1})$ for any $b \in BP_{0}(f)$.
\end{obs}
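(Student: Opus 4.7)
The plan is to combine property~(4) of Definition~\ref{interval def of ACI} (which controls the image of an interval of continuity) with Remark~\ref{Discontinuidad de la inversa}(3), which gives \(BP_{0}(f^{-1}) = f(BP_{0}(f))\) and in particular \(\#BP_{0}(f^{-1}) = \#BP_{0}(f)\).

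I would first prove the second claim. Fix \(b = p_{i+1} \in BP_{0}(f)\). Since \(f\) is continuous and orientation-preserving on \(I_{f}(i) = [p_{i},p_{i+1})\), the left limit \(c := \lim_{x \to b^{-}} f(x)\) exists and \(f\) traces an arc ending at \(c\) as \(x \to b^{-}\). Right-continuity of \(f\) at \(b\) together with \(b \in BP_{0}(f)\) force \(f(b) \neq c\), hence \(f^{-1}(c) \neq b\). However, as \(y \to c\) through values in the arc \(f(I_{f}(i))\), the preimage \(f^{-1}(y)\) tends to \(b\). Therefore \(f^{-1}\) is discontinuous at \(c\), so \(c \in BP_{0}(f^{-1})\).

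For the first claim, property~(4) applied to \([p_{i},p_{i+1})\) gives that \(f(I_{f}(i))\) is the arc from \(f(p_{i})\) to \(c\), with both endpoints in \(BP_{0}(f^{-1})\) (the former by Remark~\ref{Discontinuidad de la inversa}(3), the latter by the previous paragraph). Since \(f\) is a bijection and the \(I_{f}(i)\) partition \(\S^{1}\), so do the arcs \(f(I_{f}(i))\); they form a cyclic partition into \(\#BP_{0}(f) = \#BP_{0}(f^{-1})\) arcs whose endpoints all lie in \(BP_{0}(f^{-1})\). A counting argument then forces the right endpoint of each arc to coincide with the left endpoint of the next (both in \(BP_{0}(f^{-1})\)), so each \(f(I_{f}(i))\) is bounded by two adjacent points of \(BP_{0}(f^{-1})\) and hence equals some \(I_{f^{-1}}(j)\).

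The main subtlety is the interpretation of property~(4) of Definition~\ref{interval def of ACI} when \(b \in BP_{0}(f)\): since \(f\) is right-continuous and discontinuous at \(b\), the symbol \(f(b)\) in the expression \([f(a),f(b))\) must be read as the left limit \(\lim_{x\to b^{-}} f(x)\) rather than the value of \(f\) at \(b\). Once this reading is fixed, the rest of the argument is bookkeeping on endpoints of a cyclic partition.
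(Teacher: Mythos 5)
Your proof is correct and takes essentially the route the paper intends: the statement appears there as an unproved remark justified only as an ``adaptation'' of Remark \ref{Discontinuidad de la inversa}(3), and your argument fleshes this out using $BP_{0}(f^{-1}) = f(BP_{0}(f))$ together with orientation preservation on maximal intervals of continuity, plus the partition of $\S^{1}$ by the images $f(I_{f}(i))$. Your reading of property (4) of Definition \ref{interval def of ACI} (interpreting the right endpoint via the left limit) agrees with the paper's own gloss that property (4) just encodes orientation preservation on intervals of continuity, so no gap remains.
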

 
\begin{obs} \label{acotacion de conjuntos por h}
Let $f \in \ACI$ and $\delta > 0$. Then there exists $\delta_{0} \in (0,1/2)$ such that for any countable union of intervals $A \subset \S^{1}$  with $\mu(A) \leq \delta_{0}$ then $\mu(f(A)) \leq \delta$.
\end{obs}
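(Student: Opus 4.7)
The plan is to reduce the statement to the classical absolute continuity property on each maximal interval of continuity of $f$, and then to promote the standard ``finite union of intervals'' version of absolute continuity to a ``countable union'' version via continuity of measure from below.

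First I would decompose $\S^{1}$ using the finite partition by maximal intervals of continuity $\S^{1} = \bigsqcup_{i=1}^{k} I_f(i)$ supplied by Definition \ref{Notation BP_0 and delta}. By Definition \ref{interval def of ACI}(3), the restriction of $f$ to each $I_f(i)$ is absolutely continuous, and by property (4) it is orientation-preserving (hence monotone in the canonical chart $\iota_{I_f(i)}$). So for each $i$, applying the standard definition of absolute continuity on $I_f(i)$ to the tolerance $\delta/k$, I get some $\delta_i > 0$ such that whenever $\{(a_j,b_j)\}_{j=1}^{N}$ is a finite family of pairwise disjoint sub-intervals of $I_f(i)$ with $\sum_j (b_j - a_j) < \delta_i$, one has $\sum_j \d(f(a_j),f(b_j)) < \delta/k$. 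I then set $\delta_0 = \min\{1/2,\delta_1,\dots,\delta_k\}$.

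Next I handle a general countable union of intervals $A \subset \S^{1}$ with $\mu(A) \leq \delta_0$. I first rewrite $A$ as a countable disjoint union of intervals (which is always possible), and then split $A = \bigsqcup_{i=1}^{k} (A \cap I_f(i))$. Each piece $A \cap I_f(i)$ is still a countable disjoint union of intervals contained in $I_f(i)$, say $A \cap I_f(i) = \bigsqcup_{n \in \N} (a_n^{i}, b_n^{i})$, with $\sum_n (b_n^{i} - a_n^{i}) \leq \mu(A) \leq \delta_0 \leq \delta_i$. For each finite truncation $N$, the inequality $\sum_{n \leq N} (b_n^{i}-a_n^{i}) < \delta_i$ combined with absolute continuity and the fact that $f|_{I_f(i)}$ sends disjoint sub-intervals to disjoint sub-intervals yields
\[
\mu\Bigl(f\Bigl(\bigsqcup_{n \leq N}(a_n^{i},b_n^{i})\Bigr)\Bigr) = \sum_{n \leq N} \d(f(a_n^{i}),f(b_n^{i})) \leq \delta/k.
\]
Letting $N \to \infty$ and using continuity of measure from below on the increasing sequence $f\bigl(\bigsqcup_{n \leq N}(a_n^{i},b_n^{i})\bigr)$ gives $\mu(f(A \cap I_f(i))) \leq \delta/k$. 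Summing over $i$ yields $\mu(f(A)) \leq \delta$, as required.

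The only mildly delicate point is the passage from finite to countable families, but this is immediate from the monotonicity of $\mu$ and the orientation-preserving property on each continuity interval (which guarantees $f(\bigsqcup J_n) = \bigsqcup f(J_n)$ inside each $I_f(i)$). Everything else is bookkeeping on the finite partition into continuity pieces.
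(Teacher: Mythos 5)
Your argument is correct, and since the paper states Remark \ref{acotacion de conjuntos por h} without proof, what you wrote is exactly the justification it leaves implicit: split $\S^{1}$ into the finitely many maximal continuity intervals $I_{f}(i)$, use the finite-family $\epsilon$--$\delta$ form of absolute continuity on each with tolerance $\delta/k$, and pass from finite to countable disjoint subfamilies by continuity of measure from below (the leftover endpoints being a countable set, whose image under the bijection $f$ is countable, hence null). One small rewording: the quantity that absolute continuity in the paper's sense (Definition \ref{def rlim rcont abs cont}, via lifts) controls is the lift increment, i.e.\ the measure of the image arc $f((a_{j},b_{j}))$, and this coincides with $\d(f(a_{j}),f(b_{j}))$ only when that arc has measure at most $1/2$; so state your key estimate with the lift (equivalently the arc length) instead of the circle distance, after which the proof goes through verbatim.
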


Finally, given $f \in \ACI$ we consider a neighbourhood of $BP_0(f)$.

\begin{defi} \label{Vn}
Let $f \in \ACI$, we define $V_{n}(f) = \cup_{p \in BP_{0}(f)} B(p,\delta)$, where $\delta = \frac{1}{2n(\sharp f + 1)}$.
\end{defi}

\begin{obs}
For any $f \in \ACI$ and $n \in \N \setminus \{0\}$ it holds that $\mu(V_{n}(f)) \leq 1/n$, $\S^{1} \setminus V_{n}(f)$ is a closed set, and $f\vert_{\S^{1} \setminus V_{n}(f)}$ is absolutely continuous and, in particular, uniformly continuous.
\end{obs}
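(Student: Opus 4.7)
The plan is to verify the three claims directly from the definition of $V_n(f)$. For the measure estimate, $V_n(f)$ is a union of $\sharp f$ open balls of radius $\delta = \frac{1}{2n(\sharp f + 1)}$; since each such ball is an open arc of length $2\delta$, subadditivity of $\mu$ yields
\[
\mu(V_n(f)) \leq \sharp f \cdot 2\delta = \frac{\sharp f}{n(\sharp f + 1)} < \frac{1}{n},
\]
with the degenerate case $\sharp f = 0$ being trivial (then $V_n(f) = \emptyset$). Because $V_n(f)$ is a finite union of open balls it is open, so $\S^{1} \setminus V_n(f)$ is closed, which settles the second assertion.

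For the third assertion, observe that every $p \in BP_0(f)$ lies in $V_n(f)$ (as the centre of $B(p,\delta) \subset V_n(f)$), so $\S^{1} \setminus V_n(f) \subset Cont(f)$. Being a closed subset of $\S^{1}$ obtained by removing finitely many open balls, $\S^{1} \setminus V_n(f)$ is a finite disjoint union of closed arcs, and each such arc is contained in some maximal interval of continuity $I_f(i) = [p_i, p_{i+1})$. Item 3 of Definition \ref{interval def of ACI} provides absolute continuity of $f$ on each $I_f(i)$, and absolute continuity is inherited by closed sub-intervals; hence $f$ is absolutely continuous on each closed arc in the complement, and therefore on the finite disjoint union $\S^{1} \setminus V_n(f)$. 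Uniform continuity on the compact set $\S^{1} \setminus V_n(f)$ then follows either from absolute continuity piece-by-piece or simply from continuity on a compact set.

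No genuine obstacle arises. The one point one might worry about is that the balls $B(p,\delta)$ need not be pairwise disjoint nor individually contained in distinct $I_f(i)$ when $\delta \geq \delta_f$; but the argument only uses $BP_0(f) \subset V_n(f)$ and $\S^{1} \setminus V_n(f) \subset \bigcup_i I_f(i)$, both of which hold unconditionally on the size of $\delta$. Essentially this observation packages the standard "cut out a small neighbourhood of the singularities" trick, calibrating $\delta$ so that $\sharp f$ copies of $2\delta$ sum to less than $1/n$, so that later arguments can work with a nice compact set on which $f$ behaves like an absolutely continuous function on a finite union of intervals.
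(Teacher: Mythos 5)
Your verification is correct, and it follows the straightforward route the paper leaves implicit (the statement is given as a remark without proof): the measure bound comes from summing $\sharp f$ arcs of length $2\delta$ with $\delta = \tfrac{1}{2n(\sharp f+1)}$, closedness from $V_n(f)$ being open, and absolute continuity because each closed arc of the complement avoids $BP_0(f)$ and hence sits inside a single maximal interval of continuity, where item 3 of Definition \ref{interval def of ACI} applies. Your side remark that the argument is insensitive to whether the balls overlap or exceed $\delta_f$ is accurate, and the degenerate case $\sharp f = 0$ is handled correctly.
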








\section{$\ACI$ is a topological group}

In this section we prove that $(\ACI,d)$ is a topological group (Proposition \ref{ACI is a topological group}), and we study basic topological properties of it.

In \cite{Novak1}, Novak considered all topological group structures that satisfy some properties.
These topologies verify the following naive statement. For $f, g \in \IET$, if $\sharp f = \sharp g$, $BP_{0}(f)$ is close to $BP_{0}(g)$ and \(\sigma_{f} = \sigma_{g}\) (i.e. has the same rearrangement permutation) then $f$ is close to $g$. Since the elements of \(\IET\) are piecewise isometric, this property is closed by inverse. But this is not true in \(\ACI\).

The distance $\tilde{d}_{1}$ (Definition \ref{distance in ACI}) restricted to \(\IET\) (distance $\underline{d}$ defined in the Proposition 2.2 of \cite{Novak1}), verifies this property. However  $\tilde{d}_{1}$ is not a good distance in \(\ACI\).

The following example shows that $(\ACI, \tilde{d}_{1})$ is not a topological group. Also the restriction to \(\AIET(\S^{1})\) is not a topological group.

\begin{ej}
For $n \in \N$ let denote $a_{n}(k) = \frac{1}{4} + \frac{k}{4n}$ and $b_{n}(k) = 1 - \frac{1}{4n} + \frac{k}{4n^{2}}$. 
Note that $a_{n}(k+1) - a_{n}(k) = \frac{1}{4n}$, $\cup_{k < n}[a_{n}(k),a_{n}(k+1)) = [\frac{1}{4},\frac{1}{2})$, $b_{n}(k+ 1) - b_{n}(k)  = \frac{1}{4n^{2}}$ and $\cup_{k < n}[b_{n}(k),b_{n}(k+1)) = [1 - \frac{1}{4n},1)$

Let $f_{n} \in \AIET(\S^{1})$ a sequence defined by
$$f_{n}(x) = \left \{ \begin{matrix} 
x & \mbox{ if } x \in \left[0,\frac{1}{4}\right) 
\\ \frac{1}{2}(x - a_{n}(k)) +a_{n}(k) & \mbox{ if } x \in [a_{n}(k),  a_{n}(k+1)) \mbox{ with } k < n
\\ \frac{2n}{2n-1}(x - \frac{1}{2}) + \frac{1}{2} & \mbox{ if } x \in \left[\frac{1}{2},1-\frac{1}{4n}\right)
\\ \frac{n}{2}(x - b_{n}(k)) + a_{n}(k) + \frac{1}{8n}& \mbox{ if } x \in [b_{n}(k),b_{n}(k+1)) \mbox{ with } k < n
 \end{matrix}\right.$$

\begin{figure}[h]\centering
\includegraphics[scale = 0.7]{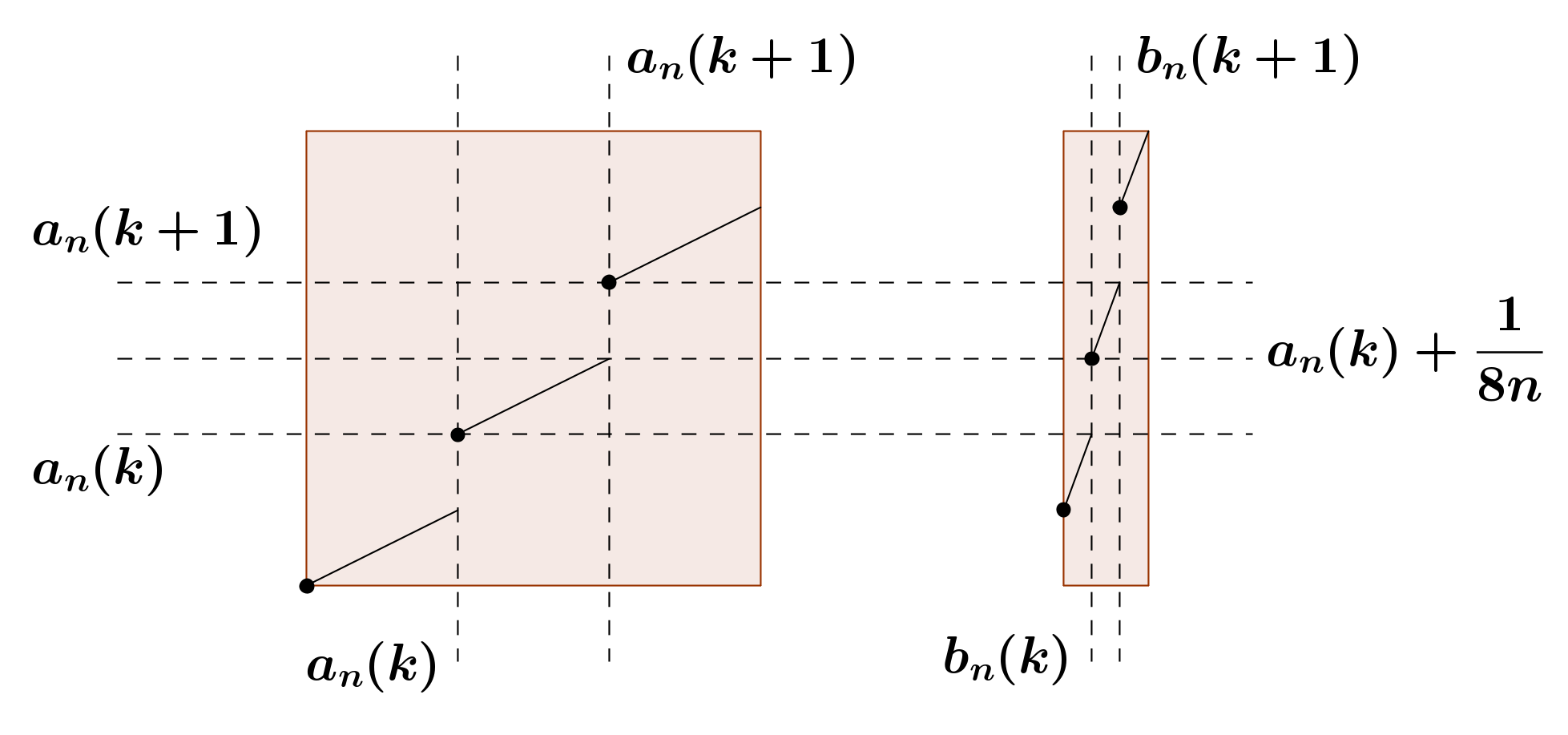}
\caption{Representation of maps \(f_{n}\) of Example 3.1.}
\end{figure}

It is easy to check that $\d(f_{n}(x),x) \leq \frac{1}{4n},\ \forall x < 1 - \frac{1}{4n}$, then $\tilde{d}_{1}(f_{n},id) \leq \frac{1}{2n}$, in particular $\tilde{d}_{1}(f_{n},id) \to 0$. 

On the other hand, $\d(f_{n}^{-1}(x),x) \geq \frac{1}{4}$ for all $x \in f_{n}([1-\frac{1}{4n},1))$, since $\mu(f_{n}([1-\frac{1}{4n},1)) = \frac{1}{8}$ so $\tilde{d}_{1}(f_{n}^{-1},id) \geq \frac{1}{24}$.
\end{ej}

This example shows that \((\AIET(\S^{1}),\tilde{d}_{1})\) is not topological group.
The reason is that small intervals can be transformed in uniformly big ones.
This fact does not happen in \(\IET\) because its elements are piecewise isometric. 

\begin{prop} \label{function close to identity chenge little the measure}
Let $\delta > 0$. Then there exists $\epsilon > 0$ such that $\vert \mu(I) - \mu(g(I)) \vert \leq \delta$ for any interval $I \subset \S^{1}$ and $g \in B(id, \epsilon)$.
\end{prop}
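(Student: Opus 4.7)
The strategy is to bound the symmetric difference $I\triangle g(I)$ and use
$$|\mu(I)-\mu(g(I))|\leq \mu(I\triangle g(I)).$$
Writing $\mu(g(I))=\int \mathbbm{1}_I(g^{-1}(y))\,d\mu(y)$, one sees directly that $y\in I\triangle g(I)$ iff exactly one of $y,\ g^{-1}(y)$ lies in $I$. In that case the short arc joining $y$ and $g^{-1}(y)$ must cross $\partial I$, so there exists $p\in\partial I$ with $\d(y,p)\leq \d(y,g^{-1}(y))$. Hence
$$I\triangle g(I)\ \subset\ \{y\in\S^{1}\ :\ \d(y,\partial I)\leq \d(y,g^{-1}(y))\}.$$

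The next step is to split this set at a threshold $\eta>0$ to be chosen in terms of $\delta$. On the part where $\d(y,g^{-1}(y))>\eta$, Markov's inequality applied to $\tilde{d}_1(g^{-1},id)=\int \d(y,g^{-1}(y))\,d\mu(y)$ gives a measure bound of $\tilde{d}_1(g^{-1},id)/\eta\leq d(g,id)/\eta\leq\epsilon/\eta$. On the remaining part, $\d(y,\partial I)\leq \eta$, and since $\partial I$ consists of at most two points, this neighborhood has $\mu$-measure at most $4\eta$. Combining,
$$\mu(I\triangle g(I))\ \leq\ \frac{\epsilon}{\eta}+4\eta.$$

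Taking $\eta=\delta/8$ and $\epsilon=\delta^{2}/16$ makes the right-hand side equal to $\delta$, which proves the proposition. The corner cases $I=\S^{1}$ (where $g(I)=\S^{1}$ trivially) and $I$ a point (both sides zero) are immediate, so we may assume $I$ is a proper arc with exactly two boundary points, justifying the $4\eta$ bound.

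The argument has no serious obstacle: the only delicate point is the geometric claim that crossing $\partial I$ forces $\d(y,\partial I)\leq \d(y,g^{-1}(y))$, which must be read with the short-arc distance on $\S^{1}$ but follows immediately once one observes that when $\d(y,g^{-1}(y))\leq\eta<\tfrac12$ the short arc between $y$ and $g^{-1}(y)$ is well defined and any crossing of a boundary point of $I$ occurs inside it. Note that the estimate depends only on $d(g,id)$ and not on the combinatorial data of $g$ (e.g.\ $\sharp g$), which is what makes the statement true uniformly over $g\in B(id,\epsilon)$ and over all intervals $I$.
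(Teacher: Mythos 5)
Your argument is correct: the bound $\vert \mu(I)-\mu(g(I))\vert \leq \mu(I\,\triangle\, g(I))$, the observation that a point of the symmetric difference and its image under $g^{-1}$ lie on opposite sides of $\partial I$ so that $\d(y,\partial I)\leq \d(y,g^{-1}(y))$, the Chebyshev/Markov bound $\mu(\{\d(y,g^{-1}(y))>\eta\})\leq \tilde{d}_{1}(g^{-1},id)/\eta \leq d(g,id)/\eta$, and the bound $4\eta$ for the $\eta$-neighbourhood of the (at most two) boundary points all check out, and the choice $\eta=\delta/8$, $\epsilon=\delta^{2}/16$ closes the estimate (with the harmless reduction to $\delta<1$ so that $\eta<1/2$). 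The paper proceeds differently in form though by the same underlying mechanism: it splits into the two cases $\mu(g(I))>\mu(I)$ and $\mu(g(I))<\mu(I)$ and in each case exhibits an explicit set $I_{1}$ (namely $g(I)\setminus B(I,\tilde{\delta}/4)$, respectively a shrunken copy of $I$ minus $g(I)$, where $\tilde{\delta}=\vert\mu(I)-\mu(g(I))\vert$) of measure at least $\tilde{\delta}/2$ on which $\d(g^{-1}(x),x)\geq \tilde{\delta}/4$, integrating to get the inverse inequality $d(g,id)\geq \tilde{\delta}^{2}/8$ and concluding contrapositively, with the same constant $\epsilon=\delta^{2}/16$. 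Your route replaces the case analysis and the ad hoc set $I_{1}$ by a single covering of $I\,\triangle\, g(I)$ into a ``large displacement'' part controlled by the $L^{1}$ distance and a ``near the boundary of $I$'' part of measure $O(\eta)$; this is slightly cleaner, makes the dependence on $d(g,id)$ alone explicit, and generalizes immediately to sets with finitely many boundary points (replacing $4\eta$ by $2k\eta$), while the paper's version gives the same quantitative content as a direct lower bound on $d(g,id)$ in terms of the measure discrepancy. Both proofs use only $\tilde{d}_{1}(g^{-1},id)\leq d(g,id)$, i.e. the displacement of the inverse, which is exactly what fails for $\tilde{d}_{1}$ alone, as the paper's Example 3.1 shows.
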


\begin{proof}
Let $I$ be an interval, and $g \in \ACI$. We can assume that $\mu(I) \leq \frac{1}{2}$.

Let $\delta_{0} = \mu(I),$ $\delta_{1} = \mu(g(I))$ and $\tilde{\delta} = \vert \delta_{1} - \delta_{0} \vert$. We study two cases: $\delta_{1} > \delta_{0}$ and $\delta_{1} < \delta_{0}$.

Case 1: $\delta_{1} > \delta_{0}$.

Let $I_{1} = g(I) \setminus B(I,\frac{\tilde{\delta}}{4})$. Note that $I_{1}$ is a finite union of intervals and $\mu(I_{1}) \geq \frac{\tilde{\delta}}{2}$.

Also $\d(g^{-1}(x),x) \geq \frac{\tilde{\delta}}{4}$ for all $x \in I_{1}$. Then $d(g,id) \geq \int_{I_{1}} \d(g^{-1}(x),x) \geq \frac{\tilde{\delta}^{2}}{8}$.

Case 2: $\delta_{1} < \delta_{0}$.

Let $a \in \S^{1}$ be such that $I \subset \overline{B(a,\frac{\delta_{0}}{2})} )$ (i.e. $a$ is the center of $I$).

Let $I_{1} = B(a,\frac{\delta_{0}}{2} - \frac{\tilde{\delta}}{4}) \setminus g(I)$. Note that $I_{1}$ is a finite union of intervals and $\mu(I_{1}) \geq \frac{\tilde{\delta}}{2}$, and $\d(g^{-1}(x),x) \geq \frac{\tilde{\delta}}{4}$ for all $x \in I_{1}$. Then we have that $d(g,id) \geq \int_{I_{1}} \d(g^{-1}(x),x) \geq \frac{\tilde{\delta}^{2}}{8}$.

In conclusion $d(g,id) > \frac{\vert \mu(I) - \mu(g(I)) \vert^{2}}{8}$, then $\epsilon = \frac{\delta^{2}}{16}$ verifies the thesis.
\end{proof}

%
%
%
%
%
%

To prove the Proposition \ref{ACI is a topological group} we have to show that the right action, the left action and the inverse are continuous.

By symmetry of the definition of $d$, the continuity of the inverse map is trivial.

\begin{obs} \label{inverse in ACI is continuous}
The inverse map $f \mapsto f^{-1}$ is an isometry in $(\ACI,d)$. 
\end{obs}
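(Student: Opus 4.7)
The plan is to unfold the definition of $d$ and observe that it is manifestly symmetric under the swap $(f,g) \leftrightarrow (f^{-1}, g^{-1})$. Concretely, I would write
\[
d(f^{-1}, g^{-1}) = \tilde{d}_{1}(f^{-1}, g^{-1}) + \tilde{d}_{1}\bigl((f^{-1})^{-1}, (g^{-1})^{-1}\bigr),
\]
and then note that $(f^{-1})^{-1} = f$ and $(g^{-1})^{-1} = g$ since elements of $\ACI$ are bijections, so the right-hand side equals $\tilde{d}_{1}(f^{-1}, g^{-1}) + \tilde{d}_{1}(f, g) = d(f,g)$.

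Thus the inverse map is distance-preserving, hence an isometry; in particular it is a homeomorphism of $(\ACI, d)$ onto itself. No further work is needed: the content of the remark is entirely captured by the fact that the author built the second summand $\tilde{d}_{1}(f^{-1}, g^{-1})$ into the definition of $d$ precisely so that the formula becomes symmetric in $f$ and $f^{-1}$.

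There is no real obstacle here; the remark is a one-line consequence of Definition \ref{distance in ACI}. The only thing to keep in mind is that one should not try to prove the isometry property directly from $\tilde{d}_{1}$, since $\tilde{d}_{1}(f,g)$ and $\tilde{d}_{1}(f^{-1}, g^{-1})$ are genuinely different (as shown by Example 3.1, where $\tilde{d}_{1}(f_n, \mathrm{id}) \to 0$ but $\tilde{d}_{1}(f_n^{-1}, \mathrm{id}) \not\to 0$). This is exactly the phenomenon the symmetrized distance $d$ is designed to repair, and which makes the isometry statement hold for $d$ while it fails for $\tilde{d}_{1}$.
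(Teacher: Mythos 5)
Your proposal is correct and matches the paper's own justification: the remark follows immediately from the symmetry of $d(f,g) = \tilde{d}_{1}(f,g) + \tilde{d}_{1}(f^{-1},g^{-1})$ under swapping $f,g$ with their inverses, which is exactly the one-line argument the paper intends. Nothing further is needed.
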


For $h \in \ACI$, we define the functions $L_{h}, R_{h}: \ACI \to \ACI$ by $L_{h}(f) = h \circ f$, $R_{h}(f) = f \circ h$. 

The function $L_{h}$ is continuous if and only if for any sequence $\{f_{n}\}$ such that $d(f_{n},f) \to 0$ verifies $d(L_{h}(f_{n}),L_{h}(f)) \to 0$. By definition  $d(h\circ f_{n},h \circ f) = \tilde{d}_{1}(h \circ f_{n},h \circ f) + \tilde{d}_{1}(f_ {n}^{-1} \circ h^{-1}, f^{-1} \circ h^{-1})$, then we can study the continuity of $L_{h}$ and $R_{h}$ via $\tilde{d}_{1}$.

\begin{defi} Let $h \in \ACI$ we define $\tilde{L}_{h}, \tilde{R}_{h}:(\ACI,d) \to (\ACI,\tilde{d}_{1})$ by $\tilde{L}_{h}(f) = h \circ f$ and $\tilde{R}_{h}(f) = f \circ h$.
\end{defi}

\begin{obs} \label{continuity by d_0}
The function $L_{h}$ is continuous if and only if $\tilde{L}_{h}$ and $\tilde{R}_{h^{-1}}$ are continuous. The function $R_{h}$ is continuous if and only if $\tilde{R}_{h}$ and $\tilde{L}_{h^{-1}}$ are continuous.
\end{obs}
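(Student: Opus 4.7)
The proof is a direct unfolding of the definition $d(f,g) = \tilde d_{1}(f,g) + \tilde d_{1}(f^{-1},g^{-1})$ together with Remark \ref{inverse in ACI is continuous}, which states that inversion is an isometry of $(\ACI,d)$. The key identity is that for any $f \in \ACI$ one has $(h \circ f)^{-1} = f^{-1} \circ h^{-1}$, so
$$d(L_{h}(f_{n}),L_{h}(f)) \;=\; \tilde d_{1}(\tilde L_{h}(f_{n}),\tilde L_{h}(f)) \;+\; \tilde d_{1}(\tilde R_{h^{-1}}(f_{n}^{-1}),\tilde R_{h^{-1}}(f^{-1})),$$
so $d$-continuity of $L_{h}$ is the simultaneous control of these two nonnegative summands.

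For the forward implication, assuming $\tilde L_{h}$ and $\tilde R_{h^{-1}}$ are continuous from $(\ACI,d)$ to $(\ACI,\tilde d_{1})$, I take a sequence $f_{n} \to f$ in $d$; the first summand above tends to zero by continuity of $\tilde L_{h}$, and for the second I apply Remark \ref{inverse in ACI is continuous} to deduce $f_{n}^{-1} \to f^{-1}$ in $d$, whence continuity of $\tilde R_{h^{-1}}$ forces the second summand to vanish as well. For the reverse implication, if $L_{h}$ is $d$-continuous and $f_{n} \to f$ in $d$, then both nonnegative summands must tend to zero individually; the first directly yields continuity of $\tilde L_{h}$, and the second yields continuity of $\tilde R_{h^{-1}}$ once I observe (again by Remark \ref{inverse in ACI is continuous}) that every $d$-convergent sequence $g_{n} \to g$ can be written as $g_{n} = f_{n}^{-1}$ with $f_{n} = g_{n}^{-1} \to g^{-1}$ in $d$.

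The case of $R_{h}$ is strictly symmetric, using $(f \circ h)^{-1} = h^{-1} \circ f^{-1}$ to split $d(R_{h}(f_{n}),R_{h}(f))$ into $\tilde d_{1}(\tilde R_{h}(f_{n}),\tilde R_{h}(f)) + \tilde d_{1}(\tilde L_{h^{-1}}(f_{n}^{-1}),\tilde L_{h^{-1}}(f^{-1}))$ and repeating the argument verbatim.

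There is no genuine obstacle here: the remark is essentially bookkeeping whose purpose is to reduce the continuity of $L_{h}$ and $R_{h}$ with respect to the two-sided distance $d$ to four continuity statements measured against the one-sided asymmetric quantity $\tilde d_{1}$, which will be established one at a time in the subsequent results of Section 3.
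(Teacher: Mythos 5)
Your proof is correct and follows essentially the same route as the paper: the paper also splits $d(h\circ f_{n},h\circ f)=\tilde d_{1}(h\circ f_{n},h\circ f)+\tilde d_{1}(f_{n}^{-1}\circ h^{-1},f^{-1}\circ h^{-1})$ via $(h\circ f)^{-1}=f^{-1}\circ h^{-1}$ and uses that inversion is an isometry of $(\ACI,d)$ (Remark \ref{inverse in ACI is continuous}). Your write-up just makes the bookkeeping of both implications explicit, which is fine.
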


For $f,g \in \ACI$ with $f$ close to $g$ in $\tilde{d}_{1}$ 
we have that for most points \(x\), the points $f(x)$ is close to $g(x)$. So let us define the set where this fact does not happen.

\begin{defi}
Let $f,g \in \ACI$, we define the sets
\begin{itemize}
\item $U_{n}(f,g) = \{x \in \S^{1} : \d(f(x),g(x)) > n\tilde{d}_{1}(f,g)\}$, and
\item $U(f,g)[\delta] =\{ x \in \S^{1} : \d(f(x),g(x)) > \delta \}$.
\end{itemize}
Note that $U_{n}(f,g)$ and $U(f,g)[\delta]$ are countable union of intervals.
\end{defi}

\begin{obs} \label{desigualdad de U(f,g)}
Let $f,g \in \ACI$ then the following inequalities are verified
\begin{itemize}
\item $\mu(U_{n}(f,g)) \leq 1/n$
\item if $\mu(U(f,g)[\delta_{0}]) = \delta$ then $\tilde{d}_{1}(f,g) \leq \frac{1}{2} \delta  + \delta_{0}(1- \delta) \leq \frac{\delta}{2} + \delta_{0}$ 
\end{itemize}

\end{obs}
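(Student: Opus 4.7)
The plan is to prove both inequalities by splitting the integral defining $\tilde{d}_{1}$ according to the definitions of the sets involved and applying elementary bounds.

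For the first inequality, the strategy is a Markov-type argument. Assuming $\tilde{d}_{1}(f,g) > 0$ (otherwise $U_{n}(f,g)$ has measure zero trivially, since $\d(f(x),g(x)) = 0$ almost everywhere), I would bound
\[
\tilde{d}_{1}(f,g) \;=\; \int_{\S^{1}} \d(f(x),g(x))\, d\mu \;\geq\; \int_{U_{n}(f,g)} \d(f(x),g(x))\, d\mu \;\geq\; n\, \tilde{d}_{1}(f,g)\cdot \mu(U_{n}(f,g)),
\]
where the last inequality uses the defining property of $U_{n}(f,g)$. Dividing by $n\, \tilde{d}_{1}(f,g)$ yields $\mu(U_{n}(f,g)) \leq 1/n$.

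For the second inequality, the idea is to split $\S^{1}$ into $U(f,g)[\delta_{0}]$ and its complement, and use the fact that the distance $\d$ on $\S^{1}$ is bounded by $1/2$. On $U(f,g)[\delta_{0}]$ we only use $\d(f(x),g(x)) \leq 1/2$, while on the complement we use $\d(f(x),g(x)) \leq \delta_{0}$ directly from the definition of $U(f,g)[\delta_{0}]$. This gives
\[
\tilde{d}_{1}(f,g) \;\leq\; \tfrac{1}{2}\, \mu(U(f,g)[\delta_{0}]) + \delta_{0}\, \mu(\S^{1}\setminus U(f,g)[\delta_{0}]) \;=\; \tfrac{1}{2}\delta + \delta_{0}(1-\delta).
\]
The final bound $\tfrac{1}{2}\delta + \delta_{0}(1-\delta) \leq \tfrac{\delta}{2} + \delta_{0}$ follows at once from $\delta_{0}\delta \geq 0$.

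No real obstacle is expected; the content is bookkeeping with the definitions of $\tilde{d}_{1}$, $U_{n}(f,g)$ and $U(f,g)[\delta_{0}]$, plus the universal bound $\d \leq 1/2$ on $\S^{1}$. The only subtlety is handling the degenerate case $\tilde{d}_{1}(f,g)=0$ in part (1), which I would dispose of in one sentence as indicated above.
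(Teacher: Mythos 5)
Your proof is correct and matches the intended argument: the paper states this as a remark without proof, and the Markov-type bound for the first item together with the split of the integral over $U(f,g)[\delta_{0}]$ and its complement (using $\d\leq 1/2$ on $\S^{1}$) is exactly the computation the paper implicitly relies on. Your handling of the degenerate case $\tilde{d}_{1}(f,g)=0$ is a sensible extra precaution.
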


These sets allow us describe the convergence in $\tilde{d}_{1}$.

\begin{prop} \label{convergence by sets}
Let $f, \{f_{n}\} \in \ACI$. The following statements are equivalent.
\begin{enumerate}
\item $\tilde{d}_{1}(f_{n},f) \to 0$.
\item For any $\delta > 0,\ \mu(U(f_{n},f)[\delta]) \to 0$.
\item For any  $\delta > 0,\ \exists N \in\N$ such that $\mu(U(f_{n},f)[\delta]) \leq \delta,\,  \forall n > N$.
\end{enumerate}
\end{prop}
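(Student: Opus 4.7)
The plan is to prove the cyclic chain $(1) \Rightarrow (2) \Rightarrow (3) \Rightarrow (1)$. Since $\tilde{d}_1(f_n,f)$ is the integral of the bounded function $\mathfrak{d}(f_n(x), f(x))$ against the finite measure $\mu$, this is morally the classical equivalence between $L^1$-convergence and convergence in measure, and the preliminary Remark (desigualdad de U(f,g)) already contains the quantitative bound needed for the nontrivial direction.

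For $(1) \Rightarrow (2)$, I would use a one-line Chebyshev/Markov estimate. For any $\delta > 0$, since $\mathfrak{d}(f_n(x), f(x)) \geq \delta$ on $U(f_n,f)[\delta]$,
\[
\delta \cdot \mu(U(f_n,f)[\delta]) \leq \int_{U(f_n,f)[\delta]} \mathfrak{d}(f_n(x), f(x)) \, d\mu \leq \tilde{d}_1(f_n, f),
\]
so dividing by $\delta$ and letting $n \to \infty$ gives (2). The implication $(2) \Rightarrow (3)$ is immediate: given $\delta > 0$, apply (2) to that same $\delta$ to obtain $N$ such that $\mu(U(f_n,f)[\delta]) \leq \delta$ for all $n > N$.

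For $(3) \Rightarrow (1)$, the key input is the inequality from Remark (desigualdad de U(f,g)): if $\mu(U(f,g)[\delta_0]) \leq \delta$, then $\tilde{d}_1(f,g) \leq \delta/2 + \delta_0$ (the Remark states this as an equality, but monotonicity in $\mu(U(\cdot,\cdot)[\delta_0])$ gives the inequality version directly). Given $\epsilon > 0$, I would pick $\delta = 2\epsilon/3$ and apply (3) to obtain $N$ with $\mu(U(f_n,f)[\delta]) \leq \delta$ for all $n > N$. Taking $\delta_0 = \delta$ in the Remark then yields
\[
\tilde{d}_1(f_n, f) \leq \frac{\delta}{2} + \delta = \frac{3\delta}{2} = \epsilon,
\]
which closes the cycle.

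There is no real obstacle here; the only point requiring mild care is matching quantifiers. Statement (3) is tailored so that the \emph{same} $\delta$ controls both $\mu(U(f_n,f)[\delta])$ and the threshold $\delta_0$ in the Remark, which is exactly what is needed to balance the two summands $\mu(U)/2$ and $\delta_0$ and recover $L^1$-convergence from a purely measure-theoretic hypothesis.
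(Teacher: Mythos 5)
Your proposal is correct and follows essentially the same route as the paper: the implication $(3)\Rightarrow(1)$ uses the second inequality of Remark \ref{desigualdad de U(f,g)} exactly as the paper does, and your direct Chebyshev estimate for $(1)\Rightarrow(2)$ is the same idea the paper packages via the sets $U_{m_n}$ and the bound $\mu(U_{m_n})\leq 1/m_{n}$. The only differences are cosmetic (your Markov inequality is written out explicitly, and your choice $\delta=2\epsilon/3$ versus the paper's $\epsilon/2$), so nothing further is needed.
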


\begin{proof} \ 

(1 \(\Rightarrow\) 2) Let $\delta > 0$, there exists a sequence $m_{n} \in \N$ such that $m_{n} \to \infty$ and $m_{n}\tilde{d}_{1}(f_{n},f)   < \delta$. Since $U(f,f_{n})[\delta] \subset U_{m_{n}}(f,f_{n})$ and $\mu(U_{m_{n}}(f,f_{n})) \leq \frac{1}{m_{n}}$, then $\mu(U(f_{n},f)[\delta]) \to 0$.

(2 \(\Rightarrow\) 3) Immediate.

(3 \(\Rightarrow\) 1) Is equivalent to prove that fixed $\epsilon > 0$ there exists $N_{0}$ such that $\tilde{d}_{1}(f_{n},f) < \epsilon$ for all $n > N_{0}$. 
By hypothesis there exists $N_{0}$ such that for all $n > N_{0}$ we have that $U(f_{n},f)[\epsilon/2] \leq \epsilon/2$. Applying Remark \ref{desigualdad de U(f,g)} we conclude that $\tilde{d}_{1}(f_{n},f) \leq \frac{\delta}{2} + \delta < \epsilon$.
\end{proof}

Note that, in the case of $h \in C^{1}(\S^{1})$ applying substitution we have that $\tilde{R}_{h}$ is continuous. 

\begin{prop} \label{tilde omega is continuous}
Let $h \in \ACI$. The function $\tilde{R}_{h}$ is continuous.
\end{prop}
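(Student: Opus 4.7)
The plan is to translate the statement into the set-based characterization of Proposition \ref{convergence by sets}, and then use the absolute continuity of $h^{-1}$ (via Remark \ref{acotacion de conjuntos por h}) to transport small sets in the image back to small sets in the domain. More precisely, given a sequence $f_n \to f$ in $(\ACI,d)$, I want to show $\tilde{d}_1(f_n\circ h, f\circ h)\to 0$. By Proposition \ref{convergence by sets} it is enough to prove that, for every fixed $\delta>0$, there exists $N$ such that
$$\mu\bigl(U(f_n\circ h,\, f\circ h)[\delta]\bigr)\le \delta \quad\text{for all } n\ge N.$$

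The key observation is that, because the distance is evaluated pointwise after composition with $h$,
$$U(f_n\circ h, f\circ h)[\delta] = \{x : \d(f_n(h(x)),f(h(x))) > \delta\} = h^{-1}\bigl(U(f_n,f)[\delta]\bigr).$$
Now I apply Remark \ref{acotacion de conjuntos por h} to the map $h^{-1}\in\ACI$: given the fixed $\delta>0$, there exists $\delta_0\in(0,1/2)$ such that every countable union of intervals $A$ with $\mu(A)\le\delta_0$ satisfies $\mu(h^{-1}(A))\le\delta$. The set $U(f_n,f)[\delta]$ is indeed a countable union of intervals, so the remark applies to it as soon as its measure is controlled.

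The measure control comes from the hypothesis: since $d(f_n,f)\to 0$ we have in particular $\tilde{d}_1(f_n,f)\to 0$, so by Proposition \ref{convergence by sets} $\mu(U(f_n,f)[\delta])\to 0$. Pick $N$ so that $\mu(U(f_n,f)[\delta])\le \delta_0$ for all $n\ge N$; combining the two bounds yields $\mu\bigl(h^{-1}(U(f_n,f)[\delta])\bigr)\le \delta$, which is exactly what was needed.

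The main (mild) obstacle is justifying Remark \ref{acotacion de conjuntos por h} is the right tool: one might be tempted to use a direct change of variables $y=h(x)$, but $|(h^{-1})'|$ need not be bounded, so the integral form of absolute continuity through the Jacobian breaks down. The set-theoretic version of absolute continuity (small sets map to small sets) sidesteps this difficulty and, together with Proposition \ref{convergence by sets}, reduces everything to a clean two-line estimate. Nothing in this argument uses the convergence $\tilde{d}_1(f_n^{-1},f^{-1})\to 0$, so in fact continuity of $\tilde{R}_h$ already follows from $\tilde{d}_1$-convergence of $f_n$ to $f$.
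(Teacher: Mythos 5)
Your argument is correct and follows essentially the same route as the paper: reduce via Proposition \ref{convergence by sets}, use the identity $U(f_n\circ h, f\circ h)[\delta]=h^{-1}(U(f_n,f)[\delta])$, and control the measure by applying Remark \ref{acotacion de conjuntos por h} to $h^{-1}$ (the paper does the same, merely juggling two thresholds $\delta<\delta_0$ instead of your single $\delta$). Your closing observation that only $\tilde{d}_1$-convergence of $f_n$ is needed is also consistent with the paper's proof.
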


\begin{proof} Let $f,\{f_{n}\} \in \ACI$ such that $d(f_{n},f) \to 0$ let us prove that $\tilde{d}_{1}(f_{n} \circ h ,f \circ h) \to 0$. By Proposition \ref{convergence by sets}, it is enough to prove that for $\delta_{0} > 0$ there exists $N \in \N$ such that $\mu(U(f_{n} \circ h,f \circ h)[\delta_{0}]) \leq \delta_{0}$ for all $n > N$.

By Remark \ref{acotacion de conjuntos por h} there exists $\delta < \delta_{0}$ such that for any $A$ countable union of intervals with $\mu(A) < \delta $ we have that $\mu(h^{-1}(A)) \leq \delta_{0}$.

Let $N \in \N$ such that $\mu(U(f_{n},f)[\delta]) < \delta$, $\forall n > N$. Since $U(f_{n},f)[\delta]$ is a countable union of intervals and $h^{-1}(U(f_{n},f)[\delta]) = U(f_{n} \circ h , f \circ h)[\delta]$ then we conclude that
$$\mu(U(f_{n} \circ h, f \circ h)[\delta_{0}]) \leq  \mu(U(f_{n} \circ h, f \circ h)[\delta]) = \mu(h^{-1}(U(f_{n},f)[\delta])) \leq \delta_{0}$$
for any $n < N$.
\end{proof}

For the case of $\tilde{L}_{h}$ we begin with the $\mathcal{AC}_{+}(\S^{1})$ action, and we adapt this case to the general case.

\begin{lema} \label{continuity of varpi in C1}
Let $h \in \mathcal{AC}_{+}(\S^{1})$. The function $\tilde{L}_{h}$ is continuous.
\end{lema}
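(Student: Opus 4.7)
The plan is to exploit the characterization of convergence given by Proposition \ref{convergence by sets} together with the uniform continuity of $h$. Since $h \in \mathcal{AC}_{+}(\S^{1})$ is in particular a continuous bijection of the compact space $\S^{1}$, it is uniformly continuous: given any $\delta > 0$, there exists $\eta > 0$ such that $\d(x,y) \leq \eta$ implies $\d(h(x),h(y)) \leq \delta$. This is exactly the tool that lets post-composition by $h$ control distances pointwise up to a uniform modulus, which was the obstacle illustrated by Example 3.1 (where the failure was precisely that small sets could be stretched into uniformly large ones).

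First I would take a sequence $f_{n} \in \ACI$ with $d(f_{n},f) \to 0$; in particular $\tilde{d}_{1}(f_{n},f) \to 0$. By Proposition \ref{convergence by sets}, it suffices to show that for every $\delta_{0} > 0$ there is $N$ with $\mu(U(h \circ f_{n}, h \circ f)[\delta_{0}]) \leq \delta_{0}$ for all $n > N$. Fix such $\delta_{0}$, pick $\eta > 0$ from uniform continuity of $h$ (so that $\d(x,y) \leq \eta \Rightarrow \d(h(x),h(y)) \leq \delta_{0}$), and set $\eta_{0} = \min(\eta,\delta_{0})$. Applying Proposition \ref{convergence by sets} to the convergent sequence $f_{n} \to f$ with threshold $\eta_{0}$, there exists $N$ such that $\mu(U(f_{n},f)[\eta_{0}]) \leq \eta_{0}$ for $n > N$.

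The core step is the set inclusion $U(h \circ f_{n}, h \circ f)[\delta_{0}] \subset U(f_{n},f)[\eta_{0}]$. Indeed, if $x$ lies outside the right-hand set then $\d(f_{n}(x),f(x)) \leq \eta_{0} \leq \eta$, so by uniform continuity $\d(h(f_{n}(x)), h(f(x))) \leq \delta_{0}$, which means $x$ lies outside the left-hand set. Consequently $\mu(U(h \circ f_{n}, h \circ f)[\delta_{0}]) \leq \eta_{0} \leq \delta_{0}$ for $n > N$, and Proposition \ref{convergence by sets} then yields $\tilde{d}_{1}(h \circ f_{n}, h \circ f) \to 0$, proving continuity of $\tilde{L}_{h}$.

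The main (mild) obstacle is conceptual rather than technical: one must recognize that the failure mode exhibited in Example 3.1 occurs only on the $R_{h}$ side, where a bad $h$ may compress a small interval (so its preimage is large). On the $L_{h}$ side, the regularity assumption $h \in \mathcal{AC}_{+}(\S^{1})$ gives genuine continuity, hence uniform continuity on the compact circle, which is exactly what is needed. The general case where $h$ is only piecewise absolutely continuous will require extra work later, since at discontinuity points uniform continuity fails and one must separately handle neighbourhoods of $BP_{0}(h)$ via sets like $V_{n}(h)$ from Definition \ref{Vn}.
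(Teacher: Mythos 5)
Your proof is correct and follows essentially the same route as the paper: uniform continuity of $h$ on the compact circle gives the inclusion $U(h \circ f_{n}, h \circ f)[\delta_{0}] \subset U(f_{n},f)[\eta_{0}]$ (the paper's $\delta_{1}$ playing the role of your $\eta_{0}$), and then Proposition \ref{convergence by sets} is applied exactly as you do. The concluding remark about why this argument cannot cover general $h \in \ACI$ also matches the paper's subsequent strategy using $V_{n}(h)$.
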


\begin{proof}

By Proposition \ref{convergence by sets} it is enough to prove that given a sequence $\{f_{n}\} \in \ACI$ and $\delta_{0} > 0$ such that $d(f_{n},f) \to 0$ there exists $N \in \N$ such that $\mu(U(h \circ f_{n},h \circ f)[\delta_{0}]) < \delta_{0}$, for all $n > N$.

The function $h$ is uniformly continuous then there exists $\delta_{1} \in (0,\delta_{0})$ such that $\d(h(x),h(y)) < \delta_{0}$ for all pair $x,y \in \S^{1}$ such that $\d(x,y) < \delta_{1}$, in particular $U(h \circ f_{n},h \circ f)[\delta_{0}] \subset U(f_{n},f)[\delta_{1}]$.

Let $N \in \N$ such that $\mu(U(f_{n},f)[\delta_{1}]) \leq \delta_{1} < \delta_{0}$ for all $n > N$. We conclude that $\mu(U(h \circ f_{n},h \circ f)[\delta_{0}]) < \delta_{0}$ for all $n > N$.
\end{proof}

To adapt this proof for the general case, we will use that $h\vert_{\S^{1} \setminus V_{n}(h)}$ is uniformly continuous, $\mu(f^{-1}( V_{n}(h)))$ and $\mu(f_{n}^{-1} (V_{n}(h)))$ are small.

\begin{prop} \label{continuidad de derecha composition}
Let $h \in\ACI$. The function $\tilde{L}_{h}$ is continuous.
\end{prop}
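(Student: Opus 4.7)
The plan is to extend the argument of Lemma \ref{continuity of varpi in C1} by excising a neighborhood of $BP_0(h)$, on whose complement $h$ is uniformly continuous, and then controlling the measure of the preimages of this neighborhood under $f$ and $f_n$. The main tool will be the sets $V_m(h)$ of Definition \ref{Vn} together with Remark \ref{acotacion de conjuntos por h}.

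Fix $\delta_0>0$. By Proposition \ref{convergence by sets}, it suffices to produce $N\in\N$ with $\mu(U(h\circ f_n, h\circ f)[\delta_0])<\delta_0$ for all $n>N$. First I apply Remark \ref{acotacion de conjuntos por h} to $f^{-1}\in\ACI$: there is $\eta>0$ such that $\mu(A)\leq\eta$ implies $\mu(f^{-1}(A))\leq\delta_0/2$, and since $\mu(V_m(h))\leq 1/m$, taking $m_0$ with $1/m_0\leq\eta$ yields $\mu(f^{-1}(V_{m_0}(h)))\leq\delta_0/2$. I then pick $m_1>m_0$ so that the balls defining $V_{m_1}(h)$ have radius strictly smaller than those defining $V_{m_0}(h)$, which produces a positive gap $\delta_2:=\delta_{m_0}-\delta_{m_1}$ between $V_{m_1}(h)$ and $\S^1\setminus V_{m_0}(h)$. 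Since $h|_{\S^1\setminus V_{m_1}(h)}$ is uniformly continuous, there is $\delta_1>0$ with $\d(h(y),h(y'))<\delta_0$ whenever $y,y'\in\S^1\setminus V_{m_1}(h)$ and $\d(y,y')<\delta_1$.

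With these choices, I would show the inclusion
$$U(h\circ f_n, h\circ f)[\delta_0]\ \subset\ f^{-1}(V_{m_0}(h))\ \cup\ U(f_n,f)\bigl[\min(\delta_1,\delta_2)\bigr].$$
Indeed, let $x$ belong to the left hand side. If $f(x)\in V_{m_0}(h)$ there is nothing to prove. Otherwise $f(x)\notin V_{m_0}(h)$; if furthermore $f_n(x)\in V_{m_1}(h)$, the gap forces $\d(f_n(x),f(x))\geq\delta_2$, so $x\in U(f_n,f)[\delta_2]$; and if $f_n(x)\notin V_{m_1}(h)$ as well, then both values lie in $\S^1\setminus V_{m_1}(h)$ (using $V_{m_1}(h)\subset V_{m_0}(h)$), whence the uniform continuity of $h$ there together with $\d(h(f_n(x)),h(f(x)))>\delta_0$ yields $\d(f_n(x),f(x))\geq\delta_1$, placing $x$ in $U(f_n,f)[\delta_1]$.

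Finally, since $d(f_n,f)\to 0$ implies $\tilde{d}_1(f_n,f)\to 0$, Proposition \ref{convergence by sets} provides $N$ with $\mu(U(f_n,f)[\min(\delta_1,\delta_2)])<\delta_0/2$ for $n>N$; combining with $\mu(f^{-1}(V_{m_0}(h)))\leq\delta_0/2$ completes the estimate. The main obstacle that this two-scale construction circumvents is the direct estimation of $\mu(f_n^{-1}(V_{m_1}(h)))$, which is not uniform in $n$: instead, the gap between the nested neighborhoods $V_{m_1}(h)\subset V_{m_0}(h)$ lets us absorb this set into $f^{-1}(V_{m_0}(h))$ together with $U(f_n,f)[\delta_2]$, and both of those are controllable.
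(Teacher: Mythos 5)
Your proof is correct and takes essentially the same approach as the paper's: excise the neighbourhoods $V_m(h)$ of $BP_{0}(h)$, bound $\mu(f^{-1}(V_{m_0}(h)))$ via Remark \ref{acotacion de conjuntos por h}, absorb the points where $f_n$ lands near $BP_{0}(h)$ but $f$ does not into $U(f_n,f)[\cdot]$ through the gap between two nested neighbourhoods, and use uniform continuity of $h$ off the excised set. The only difference is organizational: the paper covers $U(h\circ f_n,h\circ f)[\delta_0]$ by three sets and bounds $\mu(f_n^{-1}(V_{n}(h)))$ separately by the same two-scale trick, whereas you fold that absorption directly into a two-set cover.
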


\begin{proof}
To verify the continuity of $\tilde{L}_{h}$, it is enough to prove that given a sequence $\{f_{n}\} \subset \ACI$ and $\delta_{0} > 0$ such that $d(f_{n},f) \to 0$ there exists $N \in \N$ such that $\mu(U(h \circ f_{n},h \circ f)[\delta_{0}]) < \delta_{0}$, for all $n > N$.

By the previous Lemma we can assume $BP_{0}(h) \neq \emptyset$.

Applying Remark \ref{acotacion de conjuntos por h} there exists $\delta_{1} > 0$ such that, for any countable union of intervals  $A$, with $\mu(A) < \delta_{1}$, it is verified that $\mu(f^{-1}(A)) < \frac{\delta_{0}}{8}$.

There exists $N_{0} \in \N$ such that $\frac{1}{N_{0}} < \min\{ \delta_{0},\delta_{1}\}$ and $V_{n}(h)$ is a union of $\sharp h$ disjoint intervals of measure $\frac{1}{n(\sharp h+1)}$ for all $n \geq N_{0}$. Note that $\mu(V_{n}(h)) < \frac{1}{n} \leq \frac{1}{N_{0}} <  \delta_{1}$ for any $n \geq N_{0}$, then $\mu(f^{-1}(V_{n}(h))) < \frac{\delta_{0}}{8}$. 

Since $\tilde{d}_{1}(f_{n},f) \to 0$ there exists $N_{1} > 2N_{0}$ such that $\mu\left(U(f_{n},f)\left[\frac{1}{4 N_{0}(\sharp h+1)}\right]\right) \leq \frac{1}{4 N_{0}(\sharp h+1)}$ for all $n \geq N_{1}$. 

Let $n \geq N_{1}$, if  $x \in f_{n}^{-1}(V_{n}(h)) \setminus f^{-1}(V_{N_{0}}(h))$ (equivalently \(\d(f_{n}(x),BP_{0}(h)) \leq \frac{1}{2n(\sharp h+1)}\) and \(\d(f(x),BP_{0}(h)) \geq \frac{1}{2N_{0}(\sharp h+1)}\)) we have $\d(f_{n}(x),f(x)) \geq \frac{1}{4 N_{0} (\sharp h+1)}$ so $x \in U(f_{n},f)\left[\frac{1}{4 N_{0}(\sharp h+1)}\right]$.
Then for all $n \geq N_{1}$ we have that $f_{n}^{-1}(V_{n}(h)) \subset f^{-1}(V_{N_{0}}(h)) \bigcup U(f_{n},f)\left[\frac{1}{4N_{0} (\sharp h+1)}\right]$ and $\mu(f_{n}^{-1}(V_{n}(h))) < \frac{\delta_{0}}{8} + \frac{1}{4N_{0} (\sharp h+1)} \leq \frac{\delta_{0}}{2}$.

Let $x, y \in \S^{1} \setminus  V_{N_{1}}(h)$, if $\d(x,y) \leq \frac{1}{2N_{1} (\sharp h+1)}$ then there exists $j$ such that $x, y \in I_{h}(j)$.

Since $h$ is uniformly continuous in $\S^{1} \setminus V_{N_{1}}(h)$ there exists $\delta_{2} \in \left(0,\frac{1}{4N_{1} (\sharp h+1)}\right)$  such that if $x,y \in \S^{1} \setminus V_{N_{1}}(h)$ and $\d(x,y) < \delta_{2}$ then $\d(h(x),h(y)) < \delta_{0}$.

In summary, if $x$ verify $\d(h \circ f(x) ,h \circ f_{n}(x)) > \delta_{0}$ then it satisfies one of the following properties
\begin{itemize}
\item $x \in f^{-1}(V_{N_{1}}(h))$
\item $x \in f^{-1}_{n}(V_{N_{1}}(h))$
\item $x \in U(f,f_{n})[\delta_{2}]$
\end{itemize}

Then $$U(h \circ f_{n},h \circ f)[\delta_{0}] \subset (U(f_{n},f)[\delta_{2}]) \cup f^{-1}(V_{N_{1}}(h)) \cup f_{n}^{-1}(V_{N_{1}}(h)) $$
so exists $N \in \N$ such that  $\mu(U(h \circ f_{n},h \circ f)[\delta_{0}]) \leq \frac{\delta_{0}}{8} + \frac{\delta_{0}}{8} + \frac{\delta_{0}}{2} < \delta_{0}$ for all $n > N$.
\end{proof}

Then, we have that $(\ACI,d)$ is a topological group. 

\begin{defi}
Let \(l \in \R^{+}\) and \(\mathcal{D}\) a domain. 
We define  \(\mathcal{PAC}_{+}((\S^{1})_{l})\) and \(\mathcal{PAC}_{+}(\mathcal{D})\) in the same way as \(\mathcal{PAC}_{+}(\S^{1})\).
Also we define the distance \(d_{l}\) in \(\mathcal{PAC}_{+}((\S^{1})_{l})\) and \(d_{D}\) in \(\mathcal{PAC}_{+}(\mathcal{D})\) in the same way as \(d\) in \(\mathcal{PAC}_{+}(\S^{1})\).
\end{defi}

Note that \((\mathcal{PAC}_{+}((\S^{1})_{l}),d_{l})\) and \((\mathcal{PAC}_{+}(\mathcal{D}),d_{D})\) are topological groups.

\begin{obs} \label{conjugar es continuo}
Let \(f \in IET(\mathcal{D},\mathcal{D}^{\prime})\).
The homomorphism \(\hat{f}:\mathcal{PAC}_{+}(\mathcal{D}) \to \mathcal{PAC}_{+}(\mathcal{D}^{\prime})\) defined by \(\hat{f}(g) = f \circ g \circ f^{-1}\) is a homeomorphism.
The inverse of \(\hat{f}\) is the map \(\widehat{f^{-1}}\).
\end{obs}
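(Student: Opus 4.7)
The statement combines an algebraic claim (that $\hat{f}$ is a group isomorphism with inverse $\widehat{f^{-1}}$) and a topological claim (that both are continuous with respect to the distances $d_{\mathcal{D}}$ and $d_{\mathcal{D}'}$). The plan is to dispatch the algebraic part quickly and then reduce the continuity assertion to the proofs of Propositions \ref{tilde omega is continuous} and \ref{continuidad de derecha composition}, which extend essentially verbatim to the present setting because $f$ is an interval exchange transformation, hence piecewise isometric and measure-preserving.

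For the algebraic part, first I would verify that $\hat{f}$ indeed takes values in $\mathcal{PAC}_{+}(\mathcal{D}')$: since $f$ and $f^{-1}$ have finitely many discontinuities and act as orientation-preserving right-continuous isometries on their intervals of continuity, composing a piecewise absolutely continuous bijection $g$ with $f$ on the left and $f^{-1}$ on the right produces a bijection of $\mathcal{D}'$ whose discontinuity set is contained in the finite union $f(BP_{0}(g)) \cup f(BP_{0}(f^{-1})) \cup BP_{0}(f)$, and on each maximal interval of continuity the map is absolutely continuous (composition of an absolutely continuous function with piecewise affine isometries). The homomorphism identity $\hat{f}(gh) = \hat{f}(g)\hat{f}(h)$ is immediate, and the identities $\widehat{f^{-1}} \circ \hat{f} = \mathrm{id}_{\mathcal{PAC}_{+}(\mathcal{D})}$ and $\hat{f} \circ \widehat{f^{-1}} = \mathrm{id}_{\mathcal{PAC}_{+}(\mathcal{D}')}$ follow by cancelling $f^{-1}f$ and $ff^{-1}$, so $\hat{f}$ is a bijective homomorphism with inverse $\widehat{f^{-1}}$.

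For continuity, I would split $\hat{f}(g) = f \circ g \circ f^{-1}$ as left composition with $f$ followed by right composition with $f^{-1}$, and handle each piece by mimicking the proofs already done on $\S^{1}$. Right composition with $f^{-1}$ is the easy half: because $f^{-1}$ is piecewise isometric, Remark \ref{acotacion de conjuntos por h} holds for it with the identity relation $\mu(f(A)) = \mu(A)$ on intervals of continuity, so the argument of Proposition \ref{tilde omega is continuous} applies directly to conclude $\tilde{d}_{1}(g_{n}f^{-1}, gf^{-1}) \to 0$ whenever $d_{\mathcal{D}}(g_{n},g) \to 0$. For left composition with $f$, the key reduction is that on each interval of continuity $f$ is a true isometry, so at any point $x$ with $g_{n}(x)$ and $g(x)$ both lying in the same interval of continuity of $f$ we have $\d(fg_{n}(x), fg(x)) = \d(g_{n}(x), g(x))$; the "bad" set is contained in $g^{-1}(V_{N}(f)) \cup g_{n}^{-1}(V_{N}(f))$ up to a set of the form $U(g_{n},g)[\delta]$, and this is exactly the bookkeeping carried out in Proposition \ref{continuidad de derecha composition}. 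Combining the two halves with Remark \ref{inverse in ACI is continuous} (which extends to the inter-domain setting by symmetry of $d_{\mathcal{D}'}$) yields continuity of $\hat{f}$, and applying the same reasoning to $\widehat{f^{-1}}$ gives continuity of the inverse. The main conceptual obstacle is cosmetic: to check that each estimate of Section 3, originally written for $\S^{1}$, transfers to a disjoint union of circles of various lengths with the distance $\d_{\mathcal{D}}$; this amounts to noting that all integrals split as finite sums over components and each component behaves like $(\S^{1})_{l}$.
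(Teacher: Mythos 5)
Your proof is correct and follows exactly the route the paper intends for this remark (which it leaves unproved): conjugation is an algebraic isomorphism with inverse \(\widehat{f^{-1}}\), and continuity reduces to the domain versions of Propositions \ref{tilde omega is continuous} and \ref{continuidad de derecha composition}, made even simpler here because an interval exchange transformation is measure-preserving and piecewise isometric, so the bookkeeping with \(U(\cdot,\cdot)[\delta]\) and \(V_{n}(f)\) transfers component by component. One cosmetic slip: the finite set bounding \(BP_{0}(f\circ g\circ f^{-1})\) should be \(BP_{0}(f^{-1})\cup f(BP_{0}(g))\cup f(g^{-1}(BP_{0}(f)))\) rather than the union you wrote (which mixes points of \(\mathcal{D}\) and \(\mathcal{D}^{\prime}\)), but finiteness, the only property you need there, is unaffected.
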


\section{Topological properties asociated to \(BP_{0}\) and \(\sharp\).}
Let us see now how the set $BP_{0}$ and the function $\sharp$ (Definition \ref{Notation BP_0 and delta}) are related with the distance \(d\).
For the sake of simplicity the results are proved in \(\S^{1}\) but are valid for any domains.

The function $\sharp$ is not locally bounded in any neighbourhood of $id$. 

\begin{ej}\label{sharp is not bounded close to id in IET}
Let $a_{n}:\{0,...,2^{n}\} \to [0,1]$,  $b_{n}:\{0,...,2^{n}-1\} \to [0,1)$ be sequences of functions defined by $a_{n}(k) = \frac{k}{2^{n}}$ and \(b_{n}(k) =\frac{a_{n}(k) + a_{n}(k+1)}{2}\) (i.e.: \(a_{n}\) is an equipartition of \([0,1)\) and \(b_{n}\) the middle point of the $k$-th interval).

Let $f_{n} \in \ACI$ be the function define by
 $$f_{n}(x) = \left \{ \begin{matrix} x + \frac{1}{2^{n+1}} & \mbox{ if } x \in [a_{n}(k),b_{n}(k))
\\ x - \frac{1}{2^{n+1}} & \mbox{ if } x \in [b_{n}(k),a_{n}(k+1))\end{matrix}\right.$$

Note that $d(f_{n},id) = 1/2^{n}$, so $f_{n} \to id$. Also for $n \geq 1$ we have $\sharp (f_{n}) = 2^{n+1}$, then $\sharp(f_{n}) \to \infty$.
\end{ej}

\begin{figure}[h]\centering
\includegraphics[scale = 0.7]{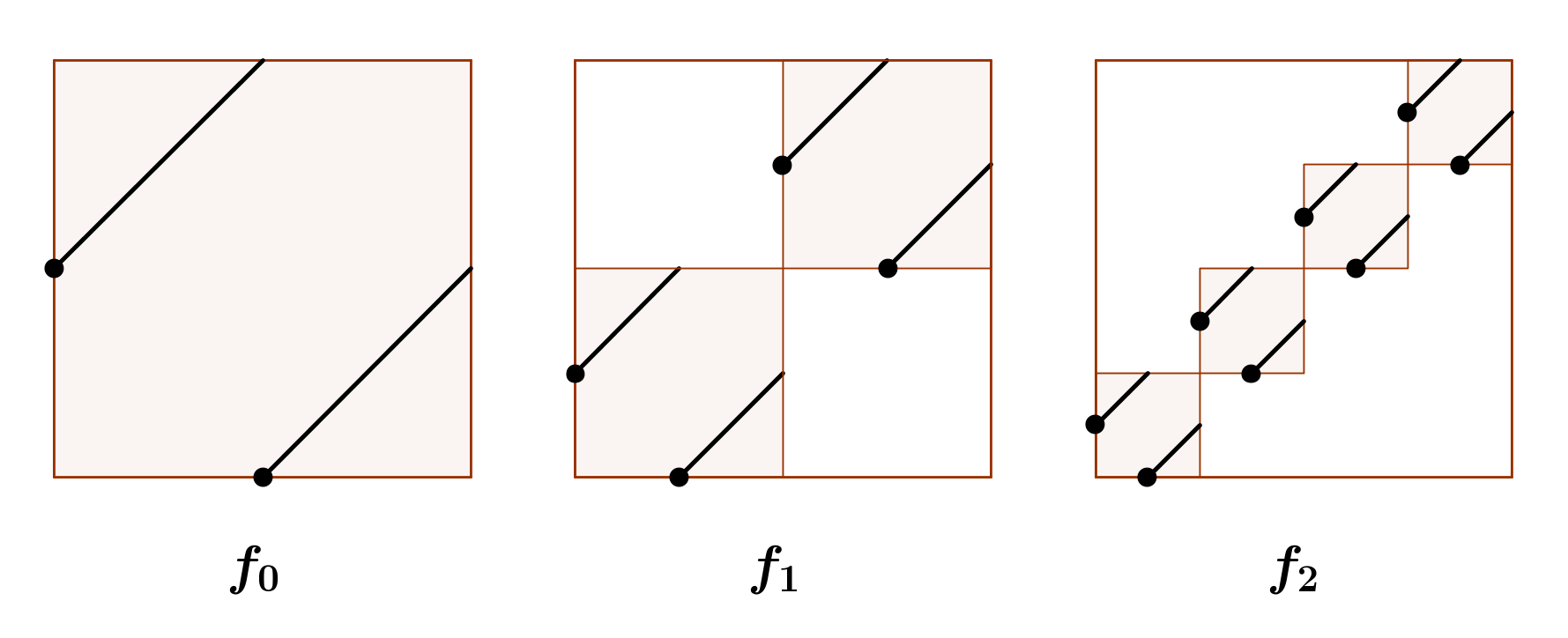}
\caption{First elements of $\{f_{n}\}$}
\end{figure}

By composition, this results is obtained in any function \(f \in \ACI\).

\begin{obs}
Let $f \in \ACI$ and $\epsilon > 0$, then $\sharp\vert_{B(f,\epsilon)}$ is not bounded.
\end{obs}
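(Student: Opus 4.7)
The plan is to reduce the statement for an arbitrary $f \in \ACI$ to the already-established case $f = id$ (Example \ref{sharp is not bounded close to id in IET}) by left-translation, using that $\ACI$ is a topological group together with the basic composition inequality for discontinuities.

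Concretely, I would start from the sequence $\{f_n\} \subset \ACI$ constructed in Example \ref{sharp is not bounded close to id in IET}, which satisfies $d(f_n,id) \to 0$ and $\sharp f_n = 2^{n+1} \to \infty$. Define $g_n = f \circ f_n$. By Proposition \ref{ACI is a topological group} (equivalently, by the continuity of $\tilde{L}_{f}$ proved in Proposition \ref{continuidad de derecha composition} together with Remark \ref{inverse in ACI is continuous} and Remark \ref{continuity by d_0}), left multiplication by $f$ is continuous, hence $d(g_n,f) = d(f\circ f_n, f\circ id) \to 0$. In particular, there is some $N$ such that $g_n \in B(f,\epsilon)$ for all $n \geq N$.

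It remains to show $\sharp g_n \to \infty$. This is immediate from the lower bound in Remark \ref{Discontinuidad de la inversa}, which gives
\[
\sharp(f \circ f_n) \;\geq\; \bigl|\sharp f - \sharp f_n\bigr| \;=\; \sharp f_n - \sharp f
\]
for all $n$ large enough that $\sharp f_n > \sharp f$. Since $\sharp f$ is a fixed finite number and $\sharp f_n \to \infty$, we conclude $\sharp g_n \to \infty$, so the restriction $\sharp|_{B(f,\epsilon)}$ is unbounded.

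There is no real obstacle here: the only two ingredients needed are the continuity of the group operation (already established in Section 3) and the elementary estimate on the cardinality of $BP_0$ under composition (Remark \ref{Discontinuidad de la inversa}). The slight subtlety worth flagging is that one cannot hope to have \emph{equality} $\sharp(f\circ f_n) = \sharp f + \sharp f_n$ because some discontinuity points of $f_n$ may be cancelled by $f$, but the lower bound from Remark \ref{Discontinuidad de la inversa} is exactly what protects us: at most $\sharp f$ of the discontinuities of $f_n$ can be cancelled, so $\sharp(f\circ f_n)$ still blows up.
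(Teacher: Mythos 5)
Your proof is correct and follows exactly the route the paper intends: the paper's own justification is the one-line remark that the result "is obtained by composition" from Example \ref{sharp is not bounded close to id in IET}, and you have simply supplied the details (continuity of $L_{f}$ from Section 3 and the lower bound of Remark \ref{Discontinuidad de la inversa}) that make that composition argument rigorous.
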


The previous result are valid in IET for any topology \(\tau\) in the hypotesis of the main theorem of \cite{Novak1} (the Example \ref{sharp is not bounded close to id in IET}, with small rotation in \([a_{n}(k),a_{n}(k+1))\) can be adapted to construct a converegence sequence to id in \(\tau\)).

Any discontinuity point of a map $f \in \ACI$ persist for a small perturbation. More precisely we have the following result. 

\begin{prop} \label{persistent discontinuities}
Let $f \in \ACI$ and $p \in BP_{0}(f)$, then for any $\delta > 0$ there exists $\epsilon > 0$ such that if $g \in B(f,\epsilon)$ then $B(p,\delta) \cap BP_{0}(g) \neq \emptyset$.
\end{prop}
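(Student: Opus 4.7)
The plan is to argue by contradiction: if $g\in B(f,\epsilon)$ were continuous throughout $B(p,\delta)$, I will use order-preservation (property~(4) of Definition~\ref{interval def of ACI}) to force the image $g([x_{-},x_{+}])$ of a short arc straddling $p$ to have \emph{macroscopic} $\mu$-measure, while simultaneously using the $\tilde{d}_{1}$-closeness of $g$ to $f$ and Proposition~\ref{function close to identity chenge little the measure} to force that same image to have tiny measure. First I would record the geometric data at $p$: by right-continuity $f(p)=p^{+}$ is the right limit of $f$ at $p$; let $p^{-}=\lim_{x\to p^{-}}f(x)$ (which exists by the piecewise absolute continuity), and set $\gamma=\d(p^{-},p^{+})>0$. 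Since a smaller $\delta$ strengthens the conclusion, I may assume $\delta<\delta_{f}(p)/2$, so that $f$ is continuous on both $B(p,\delta)_{-}\setminus\{p\}$ and $B(p,\delta)_{+}$. Then I pick $\eta=\gamma/100$ and, using the one-sided limits, choose $\delta'\in(0,\delta]$ with $f(B(p,\delta')_{\pm})\subset B(p^{\pm},\eta)$.

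Next I would determine $\epsilon$ from the continuity results of Section~3. Proposition~\ref{convergence by sets} gives $\epsilon_{1}>0$ such that $d(g,f)<\epsilon_{1}$ implies $\mu(U(g,f)[\eta])<\delta'/4$; Proposition~\ref{continuidad de derecha composition} together with Remark~\ref{inverse in ACI is continuous} provides $\epsilon_{2}>0$ with $d(g,f)<\epsilon_{2}\Rightarrow d(g\circ f^{-1},\mathrm{id})<\epsilon_{0}$, where $\epsilon_{0}$ is the threshold delivered by Proposition~\ref{function close to identity chenge little the measure} for the tolerance $\eta$. I take $\epsilon=\min(\epsilon_{1},\epsilon_{2})$ and assume for contradiction that $g\in B(f,\epsilon)$ is continuous on $B(p,\delta)$.

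Since $\mu(U(g,f)[\eta])<\delta'/4<\delta'=\mu(B(p,\delta')_{\pm})$, I can pick $x_{\pm}\in B(p,\delta')_{\pm}$ with $\d(g(x_{\pm}),f(x_{\pm}))\leq\eta$, so that $g(x_{\pm})\in B(p^{\pm},2\eta)$. Continuity and order-preservation of $g$ on $[x_{-},x_{+}]$ imply that $g([x_{-},x_{+}])$ is the positive arc from $g(x_{-})$ to $g(x_{+})$, of length either $\d(g(x_{-}),g(x_{+}))\geq\gamma-4\eta$ or $1-\d(g(x_{-}),g(x_{+}))\geq 1-\gamma-4\eta\geq\gamma-4\eta$ (using $\gamma\leq 1/2$); in both cases $\mu(g([x_{-},x_{+}]))\geq\gamma-4\eta$. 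On the other hand $f([x_{-},p))\subset B(p^{-},\eta)$ and $f([p,x_{+}])\subset B(p^{+},\eta)$ are disjoint intervals, each of measure at most $2\eta$, so applying Proposition~\ref{function close to identity chenge little the measure} to $g\circ f^{-1}$ on each piece gives
\[
\mu(g([x_{-},x_{+}]))=\mu\bigl((gf^{-1})(f([x_{-},p)))\bigr)+\mu\bigl((gf^{-1})(f([p,x_{+}]))\bigr)\leq 4\eta+2\eta=6\eta.
\]
With $\eta=\gamma/100$ these estimates are incompatible, a contradiction.

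The main obstacle I expect is the short-versus-long arc dichotomy for the image $g([x_{-},x_{+}])$: the positive arc between $g(x_{-})$ and $g(x_{+})$ could in principle be either of the two arcs joining them, but the observation that $\gamma\leq 1/2$ forces \emph{both} to have length at least $\gamma-4\eta$, so the argument is insensitive to which one $g$ produces. A secondary bookkeeping point is ensuring that $f([x_{-},p))$ and $f([p,x_{+}])$ are genuine intervals (so that Proposition~\ref{function close to identity chenge little the measure} applies directly on each piece), which is why I reduced $\delta$ below $\delta_{f}(p)/2$ at the outset.
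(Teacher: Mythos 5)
Your argument is correct, but it is genuinely different from the one in the paper. The paper's proof never measures the image of the arc: it fixes the two one-sided limits $q_{-},q_{+}$ at $p$, takes the midpoint $q$ of $(q_{-},q_{+})$, chooses an auxiliary interval $I$ \emph{outside} $[x_{-},x_{+}]$ with $f(I)\subset B(q,\delta_{1})$, and then uses the Chebyshev-type bound $\mu(U_{n}(f,g))\leq 1/n$ to find three points $x_{-},x_{0},x_{+}$ (with $x_{0}\in I$) whose $g$-images form an ordered $3$-tuple while $(x_{-},x_{0},x_{+})$ does not; Remark \ref{continuity in ordered tuples} then forces a discontinuity of $g$ in $[x_{-},x_{+}]$. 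That route needs only order-preservation and the $\tilde{d}_{1}$-estimate, with no input from the group structure. You instead run a length-versus-measure argument: continuity of $g$ on the arc straddling $p$ makes $\mu(g([x_{-},x_{+}]))\gtrsim\d(p^{-},p^{+})$, while writing $g=(g\circ f^{-1})\circ f$ and invoking Proposition \ref{function close to identity chenge little the measure} together with continuity of right translation (available since $(\ACI,d)$ is already a topological group by Section 3) caps the same measure by $6\eta$. Your version leans on heavier, already-established machinery (Proposition \ref{function close to identity chenge little the measure} and the topological-group property), whereas the paper's is more elementary and purely order-theoretic; both hinge on the jump $\d(p^{-},p^{+})>0$ and on shrinking $\delta$ below $\delta_{f}(p)$. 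Two small points to tidy up: the left half-ball should be punctured, i.e.\ you want $f(B^{*}(p,\delta')_{-})\subset B(p^{-},\eta)$ since $f(p)=p^{+}$; and the quantitative statement ``$d(g,f)<\epsilon_{1}\Rightarrow\mu(U(g,f)[\eta])<\delta'/4$'' is not literally Proposition \ref{convergence by sets} (which is phrased for sequences) but follows at once from Remark \ref{desigualdad de U(f,g)} by choosing $n$ with $1/n<\delta'/4$ and $\epsilon_{1}$ with $n\epsilon_{1}<\eta$, exactly as the paper does. Neither affects the validity of your proof.
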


\begin{proof}

We can assume $\delta < \delta_{f}(p)$ (defined in \ref{Notation BP_0 and delta}).
Let us state some auxiliary notation
\begin{itemize}
\item $q_{-} = \displaystyle\lim_{x \to p^{-}} f(x)$, $q_{+} = \displaystyle\lim_{x \to p{+}} f(x) = f(p)$. 

\item $q$ the center of $(q_{-},q_{+})$, in particular $(q_{-},q,q_{+})$ is an ordered 3-tuple.

\item \(\delta_{1} \in (0,\frac{\delta}{2})\), such that $B(q_{-},2\delta_{1})$, $B(q,2\delta_{1})$ and $B(q_{+},2\delta_{1})$ are disjoint. Note that if $x \in B(q_{-},2\delta_{1})$, $y \in B(q,2\delta_{1})$, $z \in B(q_{+},2\delta_{1})$ then \((x,y,z)\) is an ordered 3-tuple.
\end{itemize}

Let \(I \subset \S^{1}\) be a non trivial interval such that \(\mu(I) = \delta_{2} < \delta_{1}\) and \(f(I) \subset B(q,\delta_{1})\).

We can assume that \(f(B^{*}(p,\delta_{2})_{-}) \subset B(q_{-},\delta_{1})\) and  \(f(B^{*}(p,\delta_{2})_{+}) \subset B(q_{+},\delta_{1})\). 
Note that \(B^{*}(p,\delta_{2})_{-},$ $B^{*}(p,\delta_{2})_{+}, I\) are disjoint intervals. Also if \(x_{-} \in B^{*}(p,\delta_{2})_{-}$, $x_{+} \in B^{*}(p,\delta_{2})_{+}\), and \(x_{0}\in I\) we have that \(x_{0} \notin [x_{-},x_{+}]\).

Let \(n, \epsilon\) be such that \(\frac{1}{n} < \delta_{2}\) and \(n \epsilon  < \delta_{1}\). 

Let \(g \in B(f,\epsilon)\). Since \(\mu(U_{n}(f,g)) \leq \frac{1}{n}\) there exist \(x_{-}, x_{0}, x_{+} \in \S^{1}\) such that \(x_{-} \in B(p,\delta_{2})_{-}\), \(g(x_{-}) \in B(q_{-},2\delta_{1})\), \(x_{+} \in B(p,\delta_{2})_{+}\), \(g(x_{+}) \in B(q_{+},2\delta_{1})\), \(x_{0} \in I\), \(g(x_{0}) \in B(q,2\delta_{1})\).

Since \((g(x_{-}),g(x_{0}),g(x_{+}))\) is an ordered 3-tuple and  \((x_{-},x_{0},x_{+})\) is not, applying Remark \ref{continuity in ordered tuples} we have that \(g\) is not continuous in \([x_{-},x_{+}]\), in particular \(BP_{0}(g) \cap B(p,\delta_{1}) \neq \emptyset\).
\end{proof}

\begin{coro} \label{continuidad superior de sharp}
The function $\sharp : \ACI \to \N$ is continuous for the right order topology in $\N$.
\end{coro}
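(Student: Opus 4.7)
The plan is to unwind what continuity with respect to the right order topology on $\N$ means and then reduce the statement to a straightforward application of Proposition \ref{persistent discontinuities}. Recall that in the right order topology the basic open neighborhoods of an integer $n$ are the sets $\{m \in \N : m \geq n\}$. Hence $\sharp$ is continuous at $f$ in this topology if and only if there is some $\epsilon > 0$ with $\sharp(g) \geq \sharp(f)$ for every $g \in B(f,\epsilon)$; in other words, $\sharp$ is just the lower semicontinuity of $\sharp$ with respect to $d$.

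So the first step is to fix $f \in \ACI$. If $\sharp(f) = 0$ there is nothing to check, since $\sharp$ takes values in $\N$ and any neighborhood of $f$ witnesses continuity. Assume therefore that $\sharp(f) = k \geq 1$ and write $BP_{0}(f) = \{p_{1},\dots,p_{k}\}$ as in Definition \ref{Notation BP_0 and delta}.

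The second step is to separate the points of $BP_{0}(f)$ by disjoint balls. Choose $\delta \in (0, \delta_{f}/2)$, where $\delta_{f}$ is the minimum mutual distance among points of $BP_{0}(f)$ introduced in Definition \ref{Notation BP_0 and delta}. With this choice the balls $B(p_{1},\delta), \dots, B(p_{k},\delta)$ are pairwise disjoint.

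The third step is to invoke Proposition \ref{persistent discontinuities} at each $p_{i}$: there exists $\epsilon_{i} > 0$ such that every $g \in B(f,\epsilon_{i})$ satisfies $BP_{0}(g) \cap B(p_{i},\delta) \neq \emptyset$. Set $\epsilon = \min_{1 \leq i \leq k} \epsilon_{i} > 0$. Then for any $g \in B(f,\epsilon)$, the set $BP_{0}(g)$ hits each of the $k$ disjoint balls $B(p_{i},\delta)$, so $\sharp(g) = \#BP_{0}(g) \geq k = \sharp(f)$. This is exactly the neighborhood witnessing continuity at $f$ in the right order topology, and no step is really an obstacle — the argument is just a direct bookkeeping on top of Proposition \ref{persistent discontinuities}, with the one subtlety being to recognize that ``continuity for the right order topology'' is nothing but lower semicontinuity of $\sharp$.
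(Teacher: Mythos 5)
Your argument is correct and is exactly the deduction the paper intends: the corollary is stated as an immediate consequence of Proposition \ref{persistent discontinuities}, applied at each point of $BP_{0}(f)$ with pairwise disjoint balls of radius smaller than $\delta_{f}/2$, so that nearby maps have at least $\sharp(f)$ discontinuities, which is precisely lower semicontinuity, i.e.\ continuity for the right order topology. No gaps; your bookkeeping with $\epsilon=\min_i \epsilon_i$ is the same as the paper's implicit proof.
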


\begin{prop}
The group $\ACI$ is not a complete metric space.
\end{prop}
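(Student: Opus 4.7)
The plan is to exhibit an explicit Cauchy sequence \((g_n) \subset \ACI\) whose only candidate limit has infinitely many discontinuity points and therefore cannot lie in \(\ACI\). The construction accumulates one small permutation with a single jump on each element of a partition of \(\S^{1}\) into geometrically shrinking intervals; since the supports are disjoint, the partial products stabilize on every fixed piece of \(\S^{1}\) while the number of discontinuities grows without bound.

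Concretely, for each \(k \geq 1\) set \(I_k = [1 - 2^{1-k},\ 1 - 2^{-k}) \subset \S^{1}\) with midpoint \(c_k\), and let \(h_k \in \IET(\S^{1}) \subset \ACI\) be the 2-interval exchange of \(I_k\) (so \(h_k(x) = x + 2^{-k-1}\) on the left half, \(x - 2^{-k-1}\) on the right half), extended by the identity off \(I_k\); each \(h_k\) is an involution whose only discontinuity is \(c_k\). Because the supports \(I_k\) are pairwise disjoint, the \(h_k\) commute, so the partial product \(g_n = h_1 \circ h_2 \circ \cdots \circ h_n\) is itself an involution with \(g_n\vert_{I_k} = h_k\vert_{I_k}\) for \(k \leq n\) and \(g_n\vert_{I_k} = \mathrm{id}\) for \(k > n\). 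Hence \(g_n\) and \(g_{n-1}\) agree outside \(I_n\) and on \(I_n\) satisfy \(\d(g_n(x), g_{n-1}(x)) \leq 2^{-n-1}\); combined with \(g_n^{-1} = g_n\), this gives \(d(g_n, g_{n-1}) \leq 2 \cdot 2^{-n} \cdot 2^{-n-1} = 2^{-2n}\), which is summable, proving the sequence Cauchy.

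Now suppose for contradiction that \(g_n \to g\) in \((\ACI, d)\). Then \(\tilde{d}_1(g_n, g) \to 0\) is \(L^{1}\) convergence on \(\S^{1}\), so a subsequence converges almost everywhere; since \(g_n \equiv h_k\) on \(I_k\) for all \(n \geq k\), this forces \(g = h_k\) almost everywhere on \(I_k\). Both \(g\) and \(h_k\) are right-continuous everywhere, so taking right limits along the full-measure coincidence set inside \((x_0, x_0 + \delta)\) yields \(g(x_0) = h_k(x_0)\) for every \(x_0 \in I_k\). In particular \(g\) inherits the jump of \(h_k\) at \(c_k\), so \(c_k \in BP_{0}(g)\) for every \(k\), contradicting \(\sharp g < \infty\). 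Hence no limit in \(\ACI\) exists and \((\ACI, d)\) is not complete. The main (mild) subtlety is the pointwise identification \(g = h_k\) from almost-everywhere equality, which is handled by right-continuity of both sides; everything else is bookkeeping with disjoint supports.
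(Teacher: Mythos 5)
Your proof is correct and follows essentially the same route as the paper: both build a Cauchy sequence by swapping the two halves of a geometrically shrinking sequence of dyadic intervals (yours accumulating at $1$, the paper's at $0$), and both derive the contradiction that a limit in $\ACI$ would have a discontinuity at each midpoint, hence infinitely many. Your extra step upgrading almost-everywhere agreement to pointwise agreement via right-continuity is a detail the paper leaves implicit, not a different argument.
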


\begin{proof}
We will construct $\{f_{n}\} \subset \ACI$ a non-convergent Cauchy sequence. 

Let $f_{0} = id$, and $f_{n}$ by

%
%

%

$f_{n}(x) = \left \{ \begin{matrix} x & \mbox{ if } x \in [0,\frac{1}{2^{n}})\\

x + \frac{1}{2^{k+1}} & \mbox{ if } x \in [\frac{1}{2^{k}},\frac{1}{2^{k}} + \frac{1}{2^{k+1}}) \text{ for } 0 < k \leq n
\\ 

x - \frac{1}{2^{k+1}} & \mbox{ if } x \in [\frac{1}{2^{k}} + \frac{1}{2^{k+1}}, \frac{1}{2^{k-1}}) \text{ for } 0 < k \leq n. \end{matrix}\right. $

Note that $d(f_{n},f_{n+1}) \leq \frac{1}{2^{n}}$ then $f_{n}$ is a Cauchy sequence. Also, if $m > n$ then $d(f_{n},f_{m}) \leq \frac{1}{2^{n-1}}$. 

Suppose that $f_{n} \to g \in \ACI$.
Since \(f_{n}(x) = f_{m}(x)\) if \(m > n\) and \(x \geq \frac{1}{2^{n}}\). Then \(g(x) = f_{n}(x)\) for all \(x \geq \frac{1}{2^{n}}\).

We can conclude that \(p_{n} = \frac{1}{2^{n}} + \frac{1}{2^{n+1}} \in BP_{0}(g)\), so we get a contradiction.

%
%
%

\end{proof}

In the following \(\rho:\R \to \ACI\) is a continuous homomorphism. Before the proof of the main theorem we will show some basic properties of \(\rho\) and the composition \(\sharp \circ \rho\).

Since $\R = \cup_{n \in \N}(-n a, na)$ for any $a > 0$, if a non empty open interval of $0$, $J$, verifies that $\rho(J) \subset\AC_{+}(\S^{1})$ then \(\rho(\R) \subset \AC(\S^{1})\). This result can be extended to sequences.

\begin{prop} \label{continuidad en una sucesion implica continuidad}
Let \(t_{n} \in \R^{+} \) be a sequence such that \(t_{n} \to 0\). If \(\rho(t_{n})\) is continuous for all \(n \in \N\) then \(\rho(\R) \subset \mathcal{AC}(\S^{1})\).
\end{prop}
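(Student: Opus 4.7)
My plan is to show that $S := \{t \in \R : \rho(t) \in \AC_+(\S^1)\}$ equals all of $\R$. Since an element of $\ACI$ with no discontinuity points is automatically in $\AC_+(\S^1)$, we have $S = \{t : \sharp(\rho(t)) = 0\}$, and my strategy is to verify that $S$ is simultaneously a subgroup of $(\R,+)$, dense in $\R$, and closed in $\R$; any such subset of $\R$ must coincide with $\R$.

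The subgroup property is immediate from the homomorphism: if $\rho(s)$ and $\rho(t)$ are continuous, so is $\rho(s+t) = \rho(s) \circ \rho(t)$, and so is $\rho(-s) = \rho(s)^{-1}$. Density follows from the hypothesis alone: each $t_n$ lies in $S$, so by the subgroup property $\Z t_n \subset S$ for every $n$; and because $t_n > 0$ with $t_n \to 0$, the lattices $\Z t_n$ can be made arbitrarily fine, hence $\bigcup_n \Z t_n$ is already dense in $\R$.

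The crux is closedness. Here I invoke Corollary \ref{continuidad superior de sharp}: continuity of $\sharp : \ACI \to \N$ for the right-order topology on $\N$ is precisely lower semi-continuity of $\sharp$ in the usual sense, so the preimage $\sharp^{-1}(\{0\})$ is closed in $\ACI$. Composing with the continuous homomorphism $\rho$ shows that $S = (\sharp \circ \rho)^{-1}(\{0\})$ is closed in $\R$, and combined with the previous paragraph this forces $S = \R$. Therefore $\rho(\R) \subset \AC_+(\S^1) \subset \AC(\S^1)$, as required. The only real conceptual step is translating ``right-order topology'' into the correct semi-continuity statement; the hard analytical work on how discontinuities persist under perturbation has already been absorbed into Proposition \ref{persistent discontinuities} and its corollary, so no further composition or measure estimates are needed.
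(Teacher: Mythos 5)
Your proof is correct, and it takes a somewhat different route than the paper's. The paper reduces to $t>0$ (using, as you do, that $\rho(-s)=\rho(s)^{-1}$ remains continuous), writes $t=\sum_{k}t_{n_{k}}$ as a series extracted from the given sequence, observes that the partial sums have continuous images $\rho(\sum_{k\leq N}t_{n_{k}})=\rho(t_{n_{1}})\circ\cdots\circ\rho(t_{n_{N}})$ converging to $\rho(t)$ by continuity of $\rho$, and concludes with Corollary \ref{continuidad superior de sharp}. You package the same two ingredients, the homomorphism property and the lower semicontinuity of $\sharp$, into the statement that $S=(\sharp\circ\rho)^{-1}(\{0\})$ is a closed subgroup of $\R$ containing the dense set $\bigcup_{n}\Z t_{n}$, hence equal to $\R$; your translation of continuity for the right-order topology into openness of $\{\sharp>0\}$, and hence closedness of $\sharp^{-1}(\{0\})$, is exactly right, and the inverse step is justified by Remark \ref{Discontinuidad de la inversa}. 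What your formulation buys is that it sidesteps the paper's claim that every $t>0$ is a sum $\sum_{k}t_{n_{k}}$ of terms of the sequence, which, when $\sum_{n}t_{n}<\infty$, requires allowing repeated indices or a greedy selection; you replace it by the elementary density of the lattices $\Z t_{n}$. The paper's argument is more direct and constructive for a fixed $t$, but the two proofs rest on the same key lemma and are otherwise equivalent.
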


\begin{proof}
Note that \(\rho(t) \in \mathcal{AC}(\S^{1})\) if and only if \(\rho(-t) \in  \mathcal{AC}(\S^{1})\). Then it is enough to prove that \(\rho(t) \in \mathcal{AC}(\S^{1})\) for \(t \in \R^{+}\).

Let \(t \in \R^{+}\), then exists a sequence \(n_{k}\) such that \(t = \sum_{k \in \N} t_{n_{k}}\).

Since \(\rho\) is continuous the sequence \(\rho(\sum_{k=1}^{N} t_{n_{k}}) \to \rho(t)\), i.e. there exists a sequence \(f_{N} = \rho(\sum_{k= 1}^{N} t_{n_{k}})\) of continuous functions such that \(f_{N} \to \rho(t)\). Finally, since the map \(\sharp\) is continuous for the right order topology we can conclude that \(\rho(t) \in \mathcal{AC}_{+}(\S^{1})\).
\end{proof}

The previous result is valid for domains.

\begin{prop} \label{continuidad en una sucesion implica continuidad en dominios}
Let \(D = \cup_{i} S_{\lambda_{i}}\) a domain, and \(\varrho: \mathbb{R} \to \mathcal{PAC}_{+}(D)\) a continuous homomorphism. Let \(t_{n} \in \R^{+} \) a sequence such that \(t_{n} \to 0\). If \(\varrho(t_{n})\) is continuous for all \(n \in \N\) then \(\varrho(\R) \subset \mathcal{AC}(D)\).
\end{prop}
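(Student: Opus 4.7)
The plan is to adapt the argument of Proposition \ref{continuidad en una sucesion implica continuidad} essentially verbatim, relying on the fact noted at the start of Section 4 that all results of that section remain valid on an arbitrary domain $D$.

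First I would reduce to $t \in \R^{+}$: since $\varrho$ is a homomorphism, $\varrho(-t) = \varrho(t)^{-1}$, and by Remark \ref{Discontinuidad de la inversa}(3) we have $BP_0(\varrho(t)) = \varnothing$ iff $BP_0(\varrho(-t)) = \varnothing$, so it suffices to show $\varrho(t) \in \mathcal{AC}_+(D)$ for every $t > 0$. Next, fixing such a $t$, I would choose a subsequence $\{t_{n_k}\}$ of the given null sequence (possibly with repetition, which is fine since $\varrho(t_n)$ is still assumed continuous) so that $t = \sum_{k \in \N} t_{n_k}$; this is possible because $t_n \to 0^{+}$ and we can greedily pick summands.

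Then for each $N \in \N$ set $s_N = \sum_{k=1}^N t_{n_k}$ and $f_N = \varrho(s_N) = \varrho(t_{n_1}) \circ \cdots \circ \varrho(t_{n_N})$. Each $\varrho(t_{n_k})$ is continuous by hypothesis, and finite compositions of continuous elements of $\mathcal{PAC}_+(D)$ are continuous by Remark \ref{Discontinuidad de la inversa}(1), so $f_N \in \mathcal{AC}_+(D)$ for every $N$. Since $s_N \to t$ and $\varrho$ is continuous into $(\mathcal{PAC}_+(D), d_D)$, we get $d_D(f_N, \varrho(t)) \to 0$.

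Finally I would apply the domain version of Corollary \ref{continuidad superior de sharp}, namely that $\sharp : \mathcal{PAC}_+(D) \to \N$ is upper semicontinuous (right order topology), which follows from the domain version of Proposition \ref{persistent discontinuities} guaranteed by the remark opening Section 4. Since $\sharp(f_N) = 0$ for every $N$ and $f_N \to \varrho(t)$, upper semicontinuity forces $\sharp(\varrho(t)) = 0$, i.e.\ $\varrho(t) \in \mathcal{AC}_+(D)$. The only subtlety (and the one place where I would want to double-check the proof on $\S^1$ does transcribe literally) is Proposition \ref{persistent discontinuities} on $D$: its argument chooses a discontinuity point $p$ in a single component $(\S^1)_{\lambda_i}$, takes the ordered $3$-tuple $(q_-, q, q_+)$ in the same target component, and runs unchanged using $\d_D$ and $\mu_D$ restricted to that component, so no new difficulty arises.
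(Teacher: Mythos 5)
Your proposal is correct and is essentially the paper's own argument: the paper simply asserts that the proof of Proposition \ref{continuidad en una sucesion implica continuidad} transfers to domains, and that proof is exactly your scheme (reduce to $t>0$, write $t=\sum_k t_{n_k}$, note $\varrho(s_N)$ is a continuous map converging to $\varrho(t)$, and invoke the persistence of discontinuities / the semicontinuity of $\sharp$ to conclude $\sharp(\varrho(t))=0$). Your extra care about repetitions in the greedy decomposition and about checking that Proposition \ref{persistent discontinuities} transcribes componentwise to $\mathcal{D}$ only makes explicit what the paper leaves implicit.
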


\begin{prop} \label{sharp finite close to 0}
Let $\rho: \R \to \ACI$. There exists an open interval $I \subset \R$ with $0 \in I$ such that $(\sharp \circ \rho)(I)$ is bounded. 
\end{prop}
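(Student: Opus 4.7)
The plan is to exploit the lower semicontinuity of $\sharp\circ\rho$ together with the Baire category theorem to find an interval on which $\sharp\circ\rho$ is bounded, and then shift that interval to $0$ using the homomorphism property.

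First I would reinterpret Corollary \ref{continuidad superior de sharp} in analytic terms. The right order topology on $\N$ has as open sets the tails $\{n : n \geq k\}$, so continuity of $\sharp : \ACI \to \N$ is equivalent to the statement that each sublevel set $\{f \in \ACI : \sharp f \leq k\}$ is closed. Since $\rho$ is continuous, the preimages
$$B_k \;=\; \{t \in \R : \sharp(\rho(t)) \leq k\}$$
are therefore closed subsets of $\R$ for every $k \in \N$.

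Next I would apply Baire. Since $\sharp$ takes values in $\N$, we have $\R = \bigcup_{k \in \N} B_k$, and $\R$ is a complete metric space, hence a Baire space. Therefore some $B_{k_0}$ has non-empty interior, giving an open interval $J \subset \R$ and an integer $k_0 \in \N$ such that $\sharp(\rho(t)) \leq k_0$ for every $t \in J$. A priori, however, this $J$ need not contain $0$, so boundedness must still be transported to a neighbourhood of the origin.

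Finally I would translate. Pick any $t_0 \in J$ and set $I = J - t_0$, an open interval containing $0$. For $s \in I$, the homomorphism property yields
$$\rho(s) \;=\; \rho(s + t_0) \circ \rho(-t_0) \;=\; \rho(s + t_0) \circ \rho(t_0)^{-1},$$
and Remark \ref{Discontinuidad de la inversa} (the subadditivity $\sharp(f \circ g) \leq \sharp f + \sharp g$ and the identity $\sharp(f^{-1}) = \sharp f$) gives
$$\sharp(\rho(s)) \;\leq\; \sharp(\rho(s + t_0)) + \sharp(\rho(t_0)) \;\leq\; k_0 + \sharp(\rho(t_0)).$$
Since $\sharp(\rho(t_0))$ is a fixed integer, $(\sharp\circ\rho)(I)$ is bounded by $k_0 + \sharp(\rho(t_0))$, which is the desired conclusion. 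The main conceptual obstacle is recognizing that mere lower semicontinuity of $\sharp\circ\rho$ does not directly give an upper bound near $0$ — one needs the Baire argument to force an increasing union of closed sets to exhaust an interval — and then the homomorphism structure to transport that interval to the origin while only incurring an additive constant in $\sharp$.
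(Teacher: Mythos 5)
Your proof is correct, but it follows a genuinely different route from the paper's. Both arguments rest on the same two ingredients — the lower semicontinuity of $\sharp$ (Corollary \ref{continuidad superior de sharp}) and the composition inequalities of Remark \ref{Discontinuidad de la inversa} — but the paper argues by contradiction and by hand: assuming $t_{n}\to 0$ with $\sharp(\rho(t_{n}))\to\infty$, it inductively extracts a rapidly summable subsequence whose partial sums $s_{k}$ satisfy $\sharp(\rho(s_{k+1}))\geq \sharp(\rho(t_{n_{k+1}}))-\sharp(\rho(s_{k}))\geq 2\sharp(\rho(s_{k}))$ (the \emph{lower} bound in Remark \ref{Discontinuidad de la inversa}), and then uses the persistence radii $\delta_{k}$ to force $\sharp(\rho(s))=\infty$ at the limit time $s$, contradicting $\rho(s)\in\ACI$. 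You instead observe that the sublevel sets $B_{k}=\{t:\sharp(\rho(t))\leq k\}$ are closed, cover $\R$, and apply Baire to get an interval $J$ of boundedness, then translate it to $0$ using the homomorphism property and the \emph{upper} bound $\sharp(f\circ g)\leq\sharp f+\sharp g$, at the cost of an additive constant. Your version is shorter and avoids the delicate inductive bookkeeping ($2^{k+1}t_{n_{k+1}}<\min_{m\leq k}\delta_{m}$, etc.), and it produces boundedness on a whole interval around any point, not just near $0$; the paper's construction works directly at $0$ without translation and is the form the author later transplants to complete metric groups in which the identity is a limit point. Note that your argument generalizes in the same way (to any Baire topological group acting as the source), and it fails exactly where it should: in the paper's counterexample the source group is the dyadic rationals, a countable metric group without isolated points, hence not a Baire space, so neither argument applies — consistent with the counterexample given there.
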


\begin{proof}
Suppose that there exists a sequence $t_{n} \in\R$ such that $t_{n} \to 0$, and $f_{n} = \rho(t_{n})$ verifies $\sharp(f_{n}) \to \infty$.

We can assume $t_{n} > 0$ and $\sum_{n} t_{n} < \infty$, we denote $r_{n} = \sum_{j=1}^{n} t_{j}$. 

The sequence defined by $\rho(r_{n})$ is convergent, since $r_{n}$ is a convergent sequence in $\R$ and the map $\rho$ is continuous. This result is valid for any subsequence of $t_{n}$.

We construct a sequence $s_{k} = \sum_{i=1}^{k} t_{n_{i}}$ such that the limit $s = \displaystyle\lim_{n\to +\infty} s_{n}$ verifies $\sharp (\rho(s)) = \infty$, so we get a contradiction.

Now we choose $t_{n_{k}}$ by induction.

\begin{itemize}
\item Let $t_{n_{1}}$, be such that $\sharp (f_{n_{1}}) > 2$.

\item For $h_{k} = \rho(s_{k})$ applying Corollary \ref{continuidad superior de sharp} exists $\epsilon_{k}$  such that $\sharp g \geq \sharp h_{k}$ for any $g \in B(h_{k},\epsilon_{k})$. This property can be translated to $\R$ by $\rho$, more precisely, there exists $\delta_{k}$ such that $\sharp (\rho(t)) \geq \sharp (\rho(s_{k}))$, for any $t \in (s_{k}-\delta_{k},s_{k} + \delta_{k})$.

\item To choose $t_{n_{k+1}}$ we will have two considerations.

\begin{enumerate}
\item $\sharp(\rho(t_{n_{k+1}})) \geq 3\sharp(\rho(s_{k}))$

\item $2^{k+1}t_{n_{k+1}} < \min\{\delta_{m} : m \leq k\}$
\end{enumerate}
\end{itemize}

Let us denote $s = \displaystyle\lim_{k \to +\infty} s_{k}$. By construction $\vert s - s_{k} \vert < \frac{\delta_{k}}{2^{k}}$, so $\sharp (\rho(s)) \geq \sharp (\rho(s_{k}))$.
In the other hand applying Remark \ref{Discontinuidad de la inversa} we have that $\sharp (\rho(s_{k + 1})) \geq 2 \sharp (\rho(s_{k})) \geq 2^{k + 1}$, so $\sharp ( \rho(s)) = \infty$.
\end{proof}

By composition this result can be translated to any interval $J \subset \R$, in particular $\sharp \circ \rho$ is locally bounded. 

Also a corollary of the previous proposition is the existence of maximum local for the function $\sharp \circ \rho$.

\begin{nota}
We denote by $T$ the set $T = \{ t \in \R : \sharp \circ \rho \mbox{ has a local max in } t\}$.

Note that if \(t \in T\) then \(-t \in T\). Also \(T\) is dense in \(\R\).
\end{nota}

Finally, by Proposition \ref{persistent discontinuities} we have continuity of $BP_{0}$ in a local maximum of $\sharp \circ \rho$.

\begin{obs} \label{continuity in local max}
Let $\rho: \R \to \ACI$ a continuous homomorphism, $t \in T$ and $M = \sharp \circ \rho(t)$. 

Then there exists an open interval \(J_{t}\) of $t$ such that $\sharp \circ \rho$ is constant in $J_{t}$, in particular $J_{t} \subset T$.

The function $Disc_{J_{t}}:J_{t} \to \left(\S^{1}\right)^{M}/ Perm(M)$ defined by $Disc(s) = BP_{0}(\rho(s))$ is continuous. 

\end{obs}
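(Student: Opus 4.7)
Both parts follow quickly from results already in hand.

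For the first claim, the plan is to squeeze $\sharp\circ\rho$ between upper and lower bounds near $t$. By Corollary~\ref{continuidad superior de sharp}, $\sharp$ is continuous into $\N$ with the right order topology, so the sets $\{g\in\ACI:\sharp(g)\geq N\}$ are open; hence $\{s\in\R:\sharp(\rho(s))\geq M\}$ is an open set containing $t$, and therefore contains some open interval $J_{t}^{-}\ni t$. Since $t\in T$, there is also an open interval $J_{t}^{+}\ni t$ on which $\sharp\circ\rho\leq M$. Then $J_{t}=J_{t}^{-}\cap J_{t}^{+}$ is an open interval on which $\sharp\circ\rho\equiv M$; in particular every point of $J_{t}$ is a local maximum of $\sharp\circ\rho$, so $J_{t}\subset T$.

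For the second claim, fix $s_{0}\in J_{t}$ and enumerate $BP_{0}(\rho(s_{0}))=\{p_{1},\dots,p_{M}\}$. The plan is to use Proposition~\ref{persistent discontinuities} to corral the discontinuities of nearby $\rho(s)$ into disjoint balls around the $p_{i}$'s, then close the argument with a pigeonhole step. Choose $\delta>0$ with $\delta<\delta_{\rho(s_{0})}/2$ so that the balls $B(p_{i},\delta)$ are pairwise disjoint. Proposition~\ref{persistent discontinuities} applied at each $p_{i}$ gives $\epsilon_{i}>0$ such that every $g\in B(\rho(s_{0}),\epsilon_{i})$ has a discontinuity in $B(p_{i},\delta)$. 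Setting $\epsilon=\min_{i}\epsilon_{i}$, the continuity of $\rho$ provides an open neighborhood $J'\subset J_{t}$ of $s_{0}$ with $\rho(J')\subset B(\rho(s_{0}),\epsilon)$. For each $s\in J'$, every ball $B(p_{i},\delta)$ contains at least one point of $BP_{0}(\rho(s))$; but $\sharp(\rho(s))=M$ forces exactly one point in each ball, call it $q_{i}(s)$. The tuple $(q_{1}(s),\dots,q_{M}(s))$ is then a lift of $Disc_{J_{t}}(s)$ lying within coordinatewise distance $\delta$ of $(p_{1},\dots,p_{M})$, which gives continuity at $s_{0}$ in the quotient topology of $(\S^{1})^{M}/\textnormal{Perm}(M)$.

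There is no real obstacle; the only point requiring care is that the quotient topology on $(\S^{1})^{M}/\textnormal{Perm}(M)$ is precisely the one in which closeness up to \emph{some} permutation of coordinates suffices, so the matching assignment $p_{i}\mapsto q_{i}(s)$ produces an admissible lift without further bookkeeping.
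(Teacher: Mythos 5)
Your argument is correct and is essentially the paper's own (the paper leaves this as a remark whose justification is exactly the combination you use: lower semicontinuity of $\sharp$ from Corollary \ref{continuidad superior de sharp} plus the local-maximum property to get constancy of $\sharp\circ\rho$, and Proposition \ref{persistent discontinuities} with the exact count $M$ to pin one discontinuity in each of the $M$ disjoint balls, giving continuity of $Disc$). The only point worth making explicit is that $\delta$ can be taken arbitrarily small below $\delta_{\rho(s_{0})}/2$, which is what turns your estimate into continuity at $s_{0}$.
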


\begin{coro}
The set $T$ is open.
\end{coro}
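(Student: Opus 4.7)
The plan is to derive this corollary as an immediate consequence of Remark~\ref{continuity in local max}. For any $t \in T$, that remark produces an open interval $J_t$ containing $t$ on which $\sharp \circ \rho$ is constant, and already asserts $J_t \subset T$. Hence every point of $T$ has an open neighborhood contained in $T$, which is exactly the definition of $T$ being open.

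To make the inclusion $J_t \subset T$ fully explicit: if $s \in J_t$ and $M = \sharp\circ\rho(t) = \sharp\circ\rho(s)$, then $J_t$ itself is an open neighborhood of $s$ on which $\sharp\circ\rho \leq M$ (in fact equal to $M$), so $s$ is a local maximum of $\sharp\circ\rho$ and therefore $s \in T$. Writing this line is the only thing to add beyond the citation.

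There is essentially no obstacle here: the substantive work---namely that $\sharp\circ\rho$ is locally constant around each of its local maxima---was already carried out in Remark~\ref{continuity in local max}, which in turn rests on Proposition~\ref{sharp finite close to 0} (local boundedness of $\sharp\circ\rho$ near $0$, pushed around by composition with translations) together with Corollary~\ref{continuidad superior de sharp} (upper semicontinuity of $\sharp$ from Proposition~\ref{persistent discontinuities}). The present corollary is merely the repackaging of the existence of such a $J_t$ as the topological statement ``$T$ is open''.
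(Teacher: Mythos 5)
Your proposal is correct and follows exactly the paper's (implicit) argument: the corollary is stated as an immediate consequence of Remark \ref{continuity in local max}, which already provides, for each $t \in T$, an open interval $J_{t} \ni t$ on which $\sharp \circ \rho$ is constant and hence $J_{t} \subset T$. Your extra sentence making explicit why every $s \in J_{t}$ is a local maximum is a harmless elaboration of the same reasoning.
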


\section{Proof of the main theorem}

Let \(\rho:\R \to \ACI\) is a continuous homomorphism.

In the case of $\rho(\R) \subset \AC_{+}(\S^{1})$, the thesis is verified immediately, so we can assume that this is not the case.

In the next we introduce the sketch of the proof main theorem. In particular the construction of the conjugation map \(f\).

If \(f \circ \rho(t) \circ f^{-1}\) is continuous and \(x \in BP_{0}(f)\) then applying Remark \ref{Discontinuidad de la inversa} we have that \(x \in \rho(t) \circ f^{-1}(BP_{0}((\rho(t) \circ f^{-1})))\). Also, by Remark \ref{Discontinuidad de la inversa} we have that  \(BP_{0}(\rho(t) \circ f^{-1}) \subset BP_{0}(f^{-1}) \cup f(BP_{0}(\rho(t))\), then \(x \in \rho(t)(BP_{0}(f)) \cup BP_{0}(\rho(-t))\). 
In other words, if \(x \notin \rho(t)(BP_{0}(f))$ then \(x \in BP_{0}(\rho(-t))\).  

We focus in the condition \(x \in BP_{0}(\rho(-t))\). In particular we define the ``cut'' points of \(f\) as the limit points of \(BP_{0}(\rho(t))\) with \(t \to 0\). We denote this set as \(B\).

To see that the set \(B\) is finite we prove \(B \subset BP_{0}(\rho(t))\) for \(t \in T\). We denote \(B_i = [b_i,b_{i+1})\) the intervals delimited by elements of \(B\).

We prove that for any \(B_i = [b_i,b_{i+1}) \) the function \(\rho(t)\) has at most three intervals of continuity if \(t\) close to \(0\). 
One of them uniformly big, and the others uniformly small, and with one endpoint in common with \(B_i\). To define the ``glue'' of \(f\), we study the dynamics of a sequence \(\rho(t_{k})\) with \(t_k \in T\) and \(t_k \to 0\) in the small intervals.

We finally conclude that the conjugation by \(f\) is continuous in this sequence and, by Proposition  \ref{continuidad en una sucesion implica continuidad en dominios}, we have the result for all \(t \in \R\).

\subsection{Construction of the singular set B (cut points)}

\begin{defi} \label{disc permanentes}
We denote by $B$ the set $$B = \{ b \in \S^{1} : \forall \delta_{1}, \delta_{2}^{\prime} > 0, \exists s \in (-\delta_{2}^{\prime},\delta_{2}^{\prime}) \mbox{ such that } BP_{0}(\rho(s)) \cap B(b,\delta_{1})\neq \emptyset \}.$$
\end{defi}

By compactness of \(\S^{1}\) we have the following remark on \(B\).

\begin{obs} \label{disc cerca de B} \

\begin{enumerate}
\item The set $B$ is not empty.

\item Let $b \in \S^{1}$, then $b \in B$ if and only if there exist a pair of sequence  $t_{n} \in \R$ and $x_{n} \in \S^{1}$ such that $t_{n} \to 0$, $x_{n} \in BP_{0}(\rho(t_{n}))$ and $x_{n} \to b$.

\item Let $t_{n} \in \R$ and $x_{n} \in \S^{1}$ be sequences such that $t_{n} \to 0$, $x_{n} \in BP_{0}(\rho(t_{n}))$. Then any limit point of $x_{n}$ is in $B$.

\item For any $\delta_{1} > 0$ there exists $\delta_{2}^{\prime} > 0$ such that $BP_{0}(\rho(t)) \subset B(B,\delta_{1})$ for all $t \in (-\delta_{2}^{\prime},\delta_{2}^{\prime})$.
\end{enumerate}
\end{obs}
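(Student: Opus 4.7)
The plan is to prove the four items essentially in order, with the key observation being Proposition 4.9 (which provides item 1) and a repeated use of sequential compactness of $\S^{1}$.

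For item 1, I would argue by contradiction via Proposition \ref{continuidad en una sucesion implica continuidad}. Since we have reduced to the case $\rho(\R) \not\subset \AC_{+}(\S^{1})$, that proposition forbids the existence of any sequence $t_{n} \to 0$ with every $\rho(t_{n})$ continuous. Consequently, for each $n \in \N$ the interval $(-1/n,1/n)$ must contain some $t_{n}$ with $BP_{0}(\rho(t_{n})) \neq \emptyset$; pick such a $t_{n}$ and any $x_{n} \in BP_{0}(\rho(t_{n}))$. By compactness of $\S^{1}$ the sequence $\{x_{n}\}$ has a subsequential limit $b \in \S^{1}$, and by construction every ball $B(b,\delta_{1})$ meets $BP_{0}(\rho(t_{n_{k}}))$ for all $k$ large enough, whence $b \in B$.

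For item 2, the reverse implication ($b \in B$ given the sequences) is immediate: given any $\delta_{1}, \delta_{2}^{\prime} > 0$, choose $n$ large enough so that $|t_{n}| < \delta_{2}^{\prime}$ and $\d(x_{n},b) < \delta_{1}$; then $x_{n} \in BP_{0}(\rho(t_{n})) \cap B(b,\delta_{1})$. For the forward implication, apply the definition with $\delta_{1} = \delta_{2}^{\prime} = 1/n$ to produce $t_{n} \in (-1/n,1/n)$ and $x_{n} \in BP_{0}(\rho(t_{n})) \cap B(b,1/n)$; then $t_{n} \to 0$ and $x_{n} \to b$.

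Item 3 is now a direct consequence of item 2: if $x_{n_{k}} \to b$ is a convergent subsequence, then $(t_{n_{k}}, x_{n_{k}})$ witnesses $b \in B$ via item 2. Item 4 then follows by contradiction: if some $\delta_{1} > 0$ admits, for every $n$, a $t_{n} \in (-1/n,1/n)$ and $x_{n} \in BP_{0}(\rho(t_{n}))$ with $\d(x_{n}, B) \geq \delta_{1}$, extract a convergent subsequence $x_{n_{k}} \to b$; then $\d(b,B) \geq \delta_{1}$ (since $\d(\cdot,B)$ is continuous), while item 3 forces $b \in B$, a contradiction.

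Nothing here is really hard; the only subtle point is item 1, which relies essentially on Proposition \ref{continuidad en una sucesion implica continuidad} to rule out the possibility that $\rho$ is continuous on a full sequence approaching $0$ without being globally absolutely continuous. Once this is in hand, sequential compactness of $\S^{1}$ does all the remaining work.
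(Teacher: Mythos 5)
Your proof is correct and follows essentially the route the paper intends: the paper states this remark without proof, attributing it to compactness of $\S^{1}$, and your argument supplies exactly that (definition-chasing for items 2--4, sequential compactness, and the standing reduction $\rho(\R) \not\subset \AC_{+}(\S^{1})$ together with Proposition \ref{continuidad en una sucesion implica continuidad} for item 1). One cosmetic slip: the proposition you invoke is Proposition \ref{continuidad en una sucesion implica continuidad} (numbered 4.6 in the paper), not ``Proposition 4.9'', which is a notation; your later explicit reference is the correct one.
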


The item 2 of the previous Remark can be translated to any \(t \in \R\).

\begin{prop} \label{densitiy of B in Im(rho)}
Let $t \in \R$ and $b \in B$. If $b \notin BP_{0}(\rho(t))$ then for any pair $\delta, \delta_{0}^{\prime} \in  \R^{+}$, there exists $s \in (t - \delta_{0}^{\prime}, t + \delta_{0}^{\prime})$ such that $BP_{0}(\rho(s)) \cap B(b,\delta) \neq \emptyset$.
\end{prop}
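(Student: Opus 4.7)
Using Observation~\ref{disc cerca de B}, the plan first picks a sequence $t_n \to 0$ in $(-\delta_0', \delta_0')$ with $x_n \in BP_0(\rho(t_n))$ and $x_n \to b$. Setting $\eta := \min_{q \in BP_0(\rho(t))} \d(q, b) > 0$ (nonzero since $b \notin BP_0(\rho(t))$), one may shrink $\delta$ to $\min(\delta, \eta/4)$; so $x_n \in B(b, \delta/2)$ for $n$ large. Setting $s_n = t + t_n \in (t - \delta_0', t + \delta_0')$ and writing $\rho(s_n) = \rho(t) \circ \rho(t_n)$, Remark~\ref{Discontinuidad de la inversa}(2) gives $x_n \in BP_0(\rho(s_n))$ whenever $\rho(t_n)(x_n) \in \mathrm{Cont}(\rho(t))$. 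If this occurs for any single $n$, then $s = s_n$ finishes the proof.

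Otherwise $\rho(t_n)(x_n)$ always lies in the finite set $BP_0(\rho(t))$; by pigeonhole, after passing to a subsequence, $\rho(t_n)(x_n) = p$ for a fixed $p \in BP_0(\rho(t))$, with $p \neq b$. The idea is to contradict this by producing a chain $x_n = y_n^{(0)}, y_n^{(1)}, \ldots$ of distinct discontinuities of $\rho(t_n)$ inside $B(b, \delta)$ whose $\rho(t_n)$-images are pairwise distinct elements of $BP_0(\rho(t))$. Injectivity of $\rho(t_n)$ caps the chain at $k := \#BP_0(\rho(t))$, so the $(k+1)$-st step would produce a discontinuity whose image lies in $\mathrm{Cont}(\rho(t))$, contradicting the standing assumption.

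For the inductive step, assume $y \in BP_0(\rho(t_n)) \cap B(b, \delta/2)$ has $\rho(t_n)(y) = q \in BP_0(\rho(t))$ but the next discontinuity $\alpha$ of $\rho(t_n)$ lies outside $B(b, \delta)$. Then the segment $[y, y + \delta/2)$ sits inside $[y, \alpha) \cap B(b, \delta)$, where $\rho(t_n)$ is continuous and orientation-preserving, mapping it onto an interval starting at $q$ of some diameter $D_n$. If $D_n \leq \eta/2$, the image lies in $B(q, \eta/2)$, so every $z \in [y, y + \delta/2)$ satisfies $\d(\rho(t_n)(z), z) \geq \eta - \delta - \eta/2 \geq \eta/4$, placing a segment of length $\delta/2$ inside $U(\rho(t_n), \mathrm{id})[\eta/4]$ and contradicting $\mu(U(\rho(t_n), \mathrm{id})[\eta/4]) \to 0$ via Proposition~\ref{convergence by sets}. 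If instead $D_n > \eta/2$, the arc $[q, q + \eta/2]$ of the image lies inside $B(q, \eta/2)$ and is mapped by $\rho(-t_n)$ into $[y, y + \delta/2) \subset B(b, \delta)$, so each $w$ there satisfies $\d(\rho(-t_n)(w), w) \geq \eta/4$, placing an arc of length $\eta/2$ inside $U(\rho(-t_n), \mathrm{id})[\eta/4]$ and likewise contradicting $\mu(U(\rho(-t_n), \mathrm{id})[\eta/4]) \to 0$. The main obstacle is precisely this dichotomy, which requires both halves of $d = \tilde{d}_1(\cdot, \cdot) + \tilde{d}_1((\cdot)^{-1}, (\cdot)^{-1})$; once it is in place, the chain construction and pigeonhole finish the proof.
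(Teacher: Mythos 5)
Your argument is correct in substance, but it is a genuinely different route from the paper's. The paper fixes a single small time $t_{1}$ with a discontinuity $x_{0}$ of $\rho(t_{1})$ in $B(b,\delta/8)$, chooses anchor points $x_{-},x_{+}$ on either side of $b$ that are moved by less than $\delta/8$ (using $U_{N_{1}}(\rho(t_{1}),id)$), and then argues by cyclic order (Remark \ref{continuity in ordered tuples}): either $\rho(t_{1})([x_{-},x_{+}])$ stays inside $B(b,\delta)\subset Cont(\rho(t))$, in which case the discontinuity $x_{0}$ survives into $\rho(t+t_{1})$, or some image point escapes $B(b,\delta)$, in which case $\rho(t+t_{1})$ fails to preserve the order of $(x_{-},x',x_{+})$ and so must be discontinuous somewhere in $[x_{-},x_{+}]\subset B(b,\delta/2)$. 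This sidesteps entirely the question of \emph{where} the discontinuities of $\rho(t_{1})$ are sent. You instead confront that question head on: assuming each $\rho(t_{n})(x_{n})$ lands exactly on the finite set $BP_{0}(\rho(t))$, you build a chain of successive discontinuities of $\rho(t_{n})$ near $b$ with pairwise distinct images in $BP_{0}(\rho(t))$, and use closeness of both $\rho(t_{n})$ and $\rho(-t_{n})$ to the identity (via Proposition \ref{convergence by sets}, in the same spirit as Proposition \ref{function close to identity chenge little the measure}) to show each step produces a new discontinuity nearby; pigeonhole on $\#BP_{0}(\rho(t))$ then forces some discontinuity of $\rho(t_{n})$ in $B(b,\delta)$ to map into $Cont(\rho(t))$, which survives into $\rho(t+t_{n})$ by Remark \ref{Discontinuidad de la inversa}. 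What the paper's argument buys is brevity and no dependence on the cardinality of $BP_{0}(\rho(t))$; what yours buys is finer information (successive discontinuities of $\rho(t_{n})$ accumulate near $b$ in short counterclockwise steps), at the cost of heavier bookkeeping.

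Two repairs are needed in your write-up, both routine. First, the induction as stated does not close: the hypothesis requires $y\in B(b,\delta/2)$, but the new chain element is only located in $B(b,\delta)$, so after two steps the estimate $\d(z,b)<\delta$ can fail. Since the chain has at most $k+1=\#BP_{0}(\rho(t))+1$ elements, fix this by taking the step length to be $\delta/(2(k+2))$ rather than $\delta/2$ (and note that the dichotomy actually yields the stronger conclusion that the next discontinuity lies within one step counterclockwise of $y$, which also rules out wrap-around and gives distinctness of the chain). Second, the terminal step is not a contradiction of your standing assumption (which concerned only the points $x_{n}$); it is the success case of your first paragraph: the discontinuity produced has image in $Cont(\rho(t))$ and lies in $B(b,\delta)$, hence belongs to $BP_{0}(\rho(t+t_{n}))\cap B(b,\delta)$, so $s=t+t_{n}$ works. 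Finally, use a threshold such as $\eta/8$ to respect the strict inequality in the definition of $U(\cdot,\cdot)[\cdot]$, and treat the trivial case $BP_{0}(\rho(t))=\emptyset$ separately.
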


\begin{proof}

We can assume that $\delta < \frac{1}{4}$.

Let $b \in B$ be such that $\delta_{1} = \d(b,BP_{0}(\rho(t))) > 0$. If $\delta_{1} < \delta$ then $s = t$ verifies the thesis, so we can assume that $\delta_{1} \geq \delta$. By definition of $B$ there exists $\delta_{2}^{\prime} > 0$ such that for any $\delta_{3}^{\prime} \leq \delta_{2}^{\prime}$ exists $t_{1} \in (-\delta_{3}^{\prime},\delta_{3}^{\prime})$ with $d(b,BP_{0}(\rho(t_{1}))) < \frac{\delta}{8}$. We can take \(\delta_{2}^{\prime} \leq \delta_{0}^{\prime}\).

Let $N_{1} \in \N$ such that $\frac{1}{N_{1}} < \frac{\delta}{8}$. Since $\rho$ is continuous, we can take $t_{1} < \delta_{2}^{\prime}$ such that $d(\rho(t_{1}),\rho(0)) = d(\rho(t_{1}),id) < \left(\frac{1}{N}\right)\left(\frac{\delta}{8}\right) < \left(\frac{\delta}{8}\right)^{2}$.

Let $x_{0} \in BP_{0}(\rho(t_{1})) \cap B(b,\frac{\delta}{8})$, $x_{-} \in B(b,\frac{\delta}{2})_{-} \setminus \left[ B(b,\frac{\delta}{4}) \cup U_{N_{1}}(\rho(t_{1}),id) \right]$
and $x_{+} \in B(b,\frac{\delta}{2})_{+} \setminus \left[ B(b,\frac{\delta}{4}) \cup U_{N_{1}}(\rho(t_{1}),id)\right]$. We also denote \(y_{-} = \rho(t_{1})(x_{-})\) and \(y_{+} = \rho(t_{1})(x_{+})\). 

By construction $\d(x_{-},y_{-}) \leq N_{1}d(\rho(t_{1}),id) < \frac{\delta}{8}$,  and also $\d(x_{+},y_{+}) < \frac{\delta}{8}$. In particular $y_{-}, y_{+} \in B(b,\delta)\). Since $(x_{-},b,x_{+})$ is an ordered 3-tuple, $\d(x_{-},b) \geq \frac{\delta}{4}$ and $\d(x_{+},b) \geq \frac{\delta}{4}$ then $(y_{-},b,y_{+})$ is an ordered 3-tuple and $[y_{-},y_{+}] \subset B(b,\delta) \subset Cont(\rho(t))$. We conclude that \(\rho(t)\) restricted to the interval \([y_{-},y_{+}]\) is a homeomorphism into its image. We now consider \(\rho(t_{1})([x_{-},x_{+}])\).

If $\rho(t_{1})([x_{-},x_{+}]) \subset B(b,\delta) \subset Cont(\rho(t))$, then $x_{0} \in BP_{0}(\rho(t) \circ \rho(t_{1})) = BP_{0}(\rho(t + t_{1}))$, so $s = t + t_{1}$ verifies the thesis.

In the other hand, there exists $x^{\prime} \in [x_{-},x_{+}]$ such that $y^{\prime} = \rho(t_{1})(x^{\prime}) \notin B(b,\delta) \subset Cont(\rho(t))$. 
Since $\rho(t)\vert_{[y_{-},y_{+}]}: [y_{-},y_{+}] \to [\rho(t)(y_{-}),\rho(t)(y_{+})]$ is a homeomorphism then $(\rho(t)(y_{-}) , \rho(t) (y^{\prime}), \rho(t)(y_{+}) )$ isn't an ordered 3-tuple. 
Then $s = t + t_{1}$ verifies that $\rho(s)$ has a discontinuity point in the interval $[x_{-},x_{+}] \subset B(b,\frac{\delta}{2})$.
\end{proof}

\begin{prop}
Let $t \in T$, then $B \subset BP_{0}(\rho(t))$.
\end{prop}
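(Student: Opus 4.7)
I will argue by contradiction: assume $t \in T$, $b \in B$, but $b \notin BP_{0}(\rho(t))$. The idea is to use the local maximum at $t$ to freeze the cardinality and make $BP_{0}\circ \rho$ behave continuously near $t$, and then use Proposition \ref{densitiy of B in Im(rho)} to produce discontinuity points of $\rho(s)$ approaching $b$ along parameters approaching $t$. Continuity of the configuration-valued map will then force $b$ into $BP_{0}(\rho(t))$, a contradiction.

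\emph{Step 1: Freezing the cardinality near $t$.} Since $t \in T$, Remark \ref{continuity in local max} supplies an open interval $J_{t} \ni t$ on which $\sharp \circ \rho$ is constant, equal to $M := \sharp(\rho(t))$, and such that the map $Disc_{J_{t}}: J_{t} \to (\S^{1})^{M}/Perm(M)$, $s \mapsto BP_{0}(\rho(s))$, is continuous. Shrinking $J_{t}$ we may assume it has the form $(t - \delta_{0}^{\prime}, t + \delta_{0}^{\prime})$.

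\emph{Step 2: Extracting nearby discontinuity points.} Because $b \in B$ and, by assumption, $b \notin BP_{0}(\rho(t))$, the hypotheses of Proposition \ref{densitiy of B in Im(rho)} are satisfied. Applying it with $\delta = \delta_{0}^{\prime} = 1/n$ (shrinking as needed so that the resulting $s$ lies in $J_{t}$), I obtain, for every $n \geq n_{0}$, a parameter $s_{n} \in J_{t}$ with $|s_{n} - t| < 1/n$ and a point $x_{n} \in BP_{0}(\rho(s_{n})) \cap B(b, 1/n)$. In particular $s_{n} \to t$ and $x_{n} \to b$.

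\emph{Step 3: Forcing $b$ into $BP_{0}(\rho(t))$.} By the continuity of $Disc_{J_{t}}$ from Step 1, the unordered $M$-tuples $BP_{0}(\rho(s_{n}))$ converge to $BP_{0}(\rho(t))$ in $(\S^{1})^{M}/Perm(M)$. Since $x_{n} \in BP_{0}(\rho(s_{n}))$ and $x_{n} \to b$, a standard property of the quotient topology on configuration spaces of $\S^{1}$ guarantees that $b$ is one of the $M$ limit points, i.e.\ $b \in BP_{0}(\rho(t))$. This contradicts the initial assumption, so in fact $b \in BP_{0}(\rho(t))$ for every $b \in B$, which is the claim.

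The only subtle point is Step 3: one must lift the convergence $BP_{0}(\rho(s_{n})) \to BP_{0}(\rho(t))$ to an ordered convergence (after passing to a subsequence and a permutation of coordinates) so that the convergent subsequence of labels containing $x_{n}$ picks out a specific limit in $BP_{0}(\rho(t))$; that limit must be $b$ because $x_{n} \to b$. This is routine once one records that $M$ is finite and constant on $J_{t}$. The remaining steps are direct invocations of Remark \ref{continuity in local max} and Proposition \ref{densitiy of B in Im(rho)}.
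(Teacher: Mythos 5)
Your proposal is correct and follows essentially the same route as the paper: assume $b \in B \setminus BP_{0}(\rho(t))$, use Remark \ref{continuity in local max} (constant $\sharp$ and continuity of $Disc$ near the local maximum $t$) to keep the discontinuity sets of nearby $\rho(s)$ away from $b$, and contradict this with Proposition \ref{densitiy of B in Im(rho)}. The only difference is cosmetic — you phrase the contradiction via a sequence $s_{n}\to t$, $x_{n}\to b$ and closedness of the finite limit configuration, while the paper fixes a single $\delta_{1}/2$-separation and one parameter $s$ — so no further comment is needed.
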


\begin{proof}
Suppose the thesis is not true. Take $b \in B$ such that $\delta_{1} = \d(b,BP_{0}(t)) > 0$. By Remark \ref{continuity in local max} there exists $\delta_{2}^{\prime} > 0$ such that any $s \in (t - \delta_{2}^{\prime},t + \delta_{2}^{\prime})$ verifies $\d(b,BP_{0}(s)) > \frac{\delta_{1}}{2}$.

Also applying Proposition \ref{densitiy of B in Im(rho)} to $t$ with parameters $b, \frac{\delta_{1}}{2}, \delta_{2}^{\prime}$ there exists $s \in (t - \delta_{2}^{\prime}, t + \delta_{2}^{\prime})$ such that $BP_{0}(\rho(s))  \cap B(b,\frac{\delta_{1}}{2}) \neq \emptyset$, so we get a contradiction.
\end{proof}

\begin{coro}\label{B finite}\

\begin{enumerate}
\item The set $B$ (Definition \ref{disc permanentes}) is finite and non empty.

\item For any $t \in T$, there exists $I$ an interval of $t$ such that $B \subset BP_{0}(\rho(s)),\) for any \(s \in I$.
\end{enumerate}
\end{coro}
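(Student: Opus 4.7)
The plan is to deduce both parts as direct consequences of the preceding proposition stating \(B \subset BP_{0}(\rho(t))\) for every \(t \in T\), combined with two elementary facts already on record: membership in \(\ACI\) forces \(BP_{0}\) to be finite, and \(\sharp \circ \rho\) is locally constant near any of its local maxima (Remark \ref{continuity in local max}).

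For part (1), non-emptiness of \(B\) is immediate from Remark \ref{disc cerca de B}(1), so nothing new is needed there. To prove finiteness, I would choose any \(t \in T\); such a \(t\) exists because the notation paragraph defining \(T\) records that \(T\) is dense in \(\R\), in particular non-empty. The previous proposition then gives \(B \subset BP_{0}(\rho(t))\), and the right-hand side is finite because \(\rho(t) \in \ACI\) has by definition only finitely many discontinuity points. Hence \(B\) is finite.

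For part (2), fix an arbitrary \(t \in T\). I would apply Remark \ref{continuity in local max} to produce an open interval \(J_{t}\) containing \(t\) on which \(\sharp \circ \rho\) is constant, and observe that the same remark explicitly records \(J_{t} \subset T\). Consequently, for every \(s \in J_{t}\) the hypothesis \(s \in T\) of the previous proposition is satisfied, yielding \(B \subset BP_{0}(\rho(s))\). Setting \(I = J_{t}\) then finishes the argument.

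I do not foresee any real obstacle here: the corollary is essentially a repackaging of the previous proposition with Remark \ref{continuity in local max} and the density of \(T\). The genuine work was carried out already in the propositions leading up to it, in particular in extracting \(B \subset BP_{0}(\rho(t))\) for \(t \in T\) via Proposition \ref{densitiy of B in Im(rho)}; once that containment is in hand, both assertions of the corollary follow in a few lines.
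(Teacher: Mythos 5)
Your argument is correct and is exactly the intended derivation: the paper states this as an immediate corollary of the preceding proposition ($B \subset BP_{0}(\rho(t))$ for $t \in T$), with finiteness coming from $\rho(t) \in \ACI$ having finitely many discontinuities, non-emptiness from Remark \ref{disc cerca de B}, and part (2) from the interval $J_{t} \subset T$ of Remark \ref{continuity in local max}. No discrepancies with the paper's approach.
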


\begin{nota}  \label{Notation B_i}
We use the following notation $N = \# B > 0$ and $B = \{b_{1},...,b_{N}\}$ with the same criteria of the set \(BP_{0}\) (Definition \ref{Notation BP_0 and delta}). Let $B_{i} = [b_{i},b_{i+1})$ for $i < N$ and $B_{N} = [b_{N},b_{1})$. 

Finally, we denote $\delta_{B} = \min\{\d(b_{i},b_{j}) : i \neq j\}$.
\end{nota}

\begin{obs}
Let $t \in T$. Then for all $i \leq \sharp(\rho(t))$, there exists $j \leq N$ such that $I_{\rho(t)}(i) \subset B_{j}$.
\end{obs}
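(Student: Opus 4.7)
The plan is to read the claim off directly from the inclusion $B \subset BP_{0}(\rho(t))$ granted by Corollary \ref{B finite}, by comparing the two cyclic partitions of $\S^{1}$ induced by $B$ and by $BP_{0}(\rho(t))$.

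Fix $t \in T$. By Corollary \ref{B finite}(2), every point of $B$ is a discontinuity point of $\rho(t)$, so each $b_{\ell}$ appears among the ordered tuple $(p_{1},\ldots,p_{k})$ describing $BP_{0}(\rho(t))$ in Definition \ref{Notation BP_0 and delta} (with the convention $0 \in (p_{k},p_{1}]$, shared with Notation \ref{Notation B_i}). Fix an index $i$ and consider $I_{\rho(t)}(i)=[p_{i},p_{i+1})$. By maximality of the intervals of continuity, the interior of $I_{\rho(t)}(i)$ is disjoint from $BP_{0}(\rho(t))$, and in particular disjoint from $B$. So either $p_{i}$ coincides with some $b_{j}$, or $p_{i}$ lies strictly inside a unique $B_{j}=[b_{j},b_{j+1})$; either way $p_{i} \in B_{j}$ for a uniquely determined $j$.

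The key step is to control the right endpoint $p_{i+1}$. Since $b_{j+1} \in B \subset BP_{0}(\rho(t))$, the point $b_{j+1}$ equals some $p_{m}$, and by the definition of $p_{i+1}$ as the immediate successor of $p_{i}$ in the cyclic order on $BP_{0}(\rho(t))$, we get $p_{i+1} \leq b_{j+1}$ (in the cyclic sense starting from $p_{i}$). Combining these two facts yields $I_{\rho(t)}(i) = [p_{i},p_{i+1}) \subset [b_{j},b_{j+1}) = B_{j}$, which is exactly the conclusion.

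The only real obstacle is bookkeeping the half-open, cyclic conventions so that the excluded right endpoint of $I_{\rho(t)}(i)$ is correctly matched with the excluded right endpoint of $B_{j}$; this is handled uniformly by treating the labeling on $\S^{1}$ via the shared convention $0 \in (p_{k},p_{1}]$ used to define both $BP_{0}$ (Definition \ref{Notation BP_0 and delta}) and $B$ (Notation \ref{Notation B_i}).
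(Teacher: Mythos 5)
Your argument is correct and is exactly the reasoning the paper leaves implicit: since $t \in T$ gives $B \subset BP_{0}(\rho(t))$, the partition of $\S^{1}$ by the discontinuity points $p_{i}$ refines the partition by the $b_{j}$, so each maximal continuity interval $[p_{i},p_{i+1})$ sits inside the unique $B_{j}$ containing $p_{i}$. The only cosmetic point is that the inclusion $B \subset BP_{0}(\rho(t))$ is literally the unnumbered Proposition preceding Corollary \ref{B finite} (or its Corollary with $s=t$), but that does not affect the proof.
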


\subsection{Properties of the singular set B}

Now we study the homomorphism $\rho$ restricted to intervals of 0. In particular the properties of the set $BP_{0}(\rho(t))$ when $\sharp \circ \rho$ has a relative maximum in $t$ (i.e.: \(t \in T\)).

\begin{prop} \label{discontinuidades que no estan en B van a B}
There exists $I$ an interval of 0 such that for any $t \in I \cap T$ if $p \in BP_{0}(\rho(t))\setminus B$ then 
\begin{enumerate}
\item $\rho(t)(p) \in B$,
\item $\lim_{x \to p^{-}} \rho(t)(x) \in B$.
\end{enumerate}
\end{prop}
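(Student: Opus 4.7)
The plan is to argue by contradiction, treating case 1 ($q := \rho(t)(p) \notin B$) in detail and indicating the mirror case 2 ($q^{-} := \lim_{x \to p^{-}} \rho(t)(x) \notin B$) at the end. Since $p, q \notin B$, the definition of $B$ (Definition \ref{disc permanentes}) furnishes mutually disjoint open neighborhoods $W_{p} \ni p$ and $W_{q} \ni q$, both disjoint from $B$, with $W_{p} \cap BP_{0}(\rho(t)) = \{p\}$, and a symmetric open neighborhood $V \ni 0$ in $\R$ such that $BP_{0}(\rho(s)) \cap (W_{p} \cup W_{q}) = \emptyset$ for every $s \in V$. On each of $W_{p}$ and $W_{q}$, every $\rho(s)$ with $s \in V$ is then a continuous, strictly increasing, absolutely continuous bijection onto its image.

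The algebraic input is commutativity of the one-parameter family: $\rho(s+t) = \rho(s) \circ \rho(t) = \rho(t) \circ \rho(s)$, which evaluated at $p$ yields
\[ \rho(s)(q) \;=\; \rho(s+t)(p) \;=\; \rho(t)\bigl(\rho(s)(p)\bigr). \]
The first substantive step is to upgrade the $\tilde{d}_{1}$-convergence $\rho(s) \to id$ (Proposition \ref{convergence by sets}) to pointwise convergence $\rho(s)(p) \to p$ and $\rho(s)(q) \to q$; the idea is to sandwich $\rho(s)(q)$ between $\rho(s)(y)$ and $\rho(s)(z)$ for $y < q < z$ in $W_{q}$, extract a.e.\ subsequential limits at $y, z$ from $L^{1}$-convergence, and exploit monotonicity of $\rho(s)|_{W_{q}}$ to trap $\rho(s)(q)$ arbitrarily close to $q$. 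With this, the LHS of the displayed identity tends to $q$, so the RHS $\rho(t)(\rho(s)(p))$ must also tend to $q$; since $\rho(t)$ has right limit $q$ and left limit $q^{-} \neq q$ at $p$, the only possibility is $\rho(s)(p) \geq p$ for every small $s$ of either sign (otherwise the RHS would approach $q^{-}$). Then $\rho(-s)(\rho(s)(p)) = p$ combined with the symmetric inequality $\rho(-s)(p) \geq p$ and strict monotonicity of the AC bijection $\rho(-s)|_{W_{p}}$ yields the chain $p = \rho(-s)(\rho(s)(p)) \geq \rho(-s)(p) \geq p$, forcing $\rho(-s)(p) = p$, and then injectivity of $\rho(-s)$ gives $\rho(s)(p) = p$. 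Thus $\rho(s)$ fixes $p$ for every $s \in V$. Taking $n$ large enough that $t/n \in V$ and writing $\rho(t) = \rho(t/n)^{n}$, each factor is continuous at $p$ (since $p \notin B$) and fixes $p$, so $\rho(t)$ is continuous at $p$, contradicting $p \in BP_{0}(\rho(t))$.

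Case 2 is the mirror argument: computing the left limit of $\rho(s+t)$ at $p$ in both orders, the decomposition $\rho(s) \circ \rho(t)$ gives $\rho(s)(q^{-}) \to q^{-}$ (using that $\rho(t)$ approaches $q^{-}$ from below on the interval of continuity ending at $p$, by property 4 of Definition \ref{interval def of ACI}), while $\rho(t) \circ \rho(s)$ produces the same limit only when $\rho(s)(p) \leq p$ (otherwise the left limit jumps to $q$, the right limit of $\rho(t)$ at $p$). The same monotonicity/bijectivity trick then collapses this to $\rho(s)(p) = p$ and the final contradiction is identical. The step I expect to be the main obstacle is the pointwise upgrade $\rho(s)(q) \to q$ from $\tilde{d}_{1}$-convergence alone; the crucial ingredients are monotonicity of $\rho(s)$ on the continuity window $W_{q}$ together with continuity of the limit $id$, and this is the one piece of the proof I would write out in full.
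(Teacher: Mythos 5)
Your argument is correct in substance, but it is a genuinely different route from the paper's. The paper argues by contradiction with sequences $t_{n}\to 0$ and bad points $x_{n}\in BP_{0}(\rho(t_{n}))\setminus B$, pushes everything to a fixed $t\in T$ via $\rho(t+t_{n})=\rho(t)\circ\rho(t_{n})$, and then uses the rigidity of the discontinuity set at a local maximum of $\sharp\circ\rho$ (continuity of $Disc$ in Remark \ref{continuity in local max}, together with $B\subset BP_{0}(\rho(s))$ for $s$ near $t$) to get a counting contradiction: the ball $B(a,\delta_{\rho(t)}/2)$ around the limit point $a\in B$ would contain two discontinuities of $\rho(t+t_{n})$ where it must contain exactly one; item 2 is handled the same way with left limits. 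You instead exploit commutativity of the flow, upgrade $\tilde{d}_{1}$-convergence to pointwise convergence of $\rho(s)$ at the points $p,q\notin B$ (legitimate, since Remark \ref{disc cerca de B}(4) gives a fixed window of continuity), and use the left/right-limit dichotomy plus monotonicity to force $\rho(s)(p)=p$ for all small $s$, whence $\rho(t)=\rho(t/n)^{n}$ is continuous at $p$. This buys more than the statement asks: your proof never uses $t\in T$ nor $t$ small, so it yields the conclusion for every $t\in\R$, whereas the paper's proof stays entirely within the combinatorial machinery it has already built (no new analytic lemma) and delivers exactly the local statement needed later. Two points in your sketch need the measure-distortion estimate of Proposition \ref{function close to identity chenge little the measure} (or an equivalent $\tilde d_{1}$ lower bound) to be airtight on the circle: in the sandwich step, the counterclockwise arc $\rho(s)([y,z])=[\rho(s)(y),\rho(s)(z)]$ must be shown to be the short arc rather than the complementary one, and likewise in the collapse $p=\rho(-s)(\rho(s)(p))\geq\rho(-s)(p)\geq p$ the linear-order manipulation is only valid once you rule out that the image arc $[\rho(-s)(p),p]$ wraps around the circle; both are immediate from that proposition since the source intervals have small measure. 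Also, in the final step the continuity of each factor $\rho(t/n)$ at $p$ comes from your choice of $V$ with $BP_{0}(\rho(s))\cap W_{p}=\emptyset$, not from $p\notin B$ alone; with these patches the proof goes through.
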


\begin{proof}
Let us prove item 1. 

Suppose the thesis is not true, then there exist  sequences $\{t_{n}\} \subset T$ and $x_{n} \in BP_{0}(\rho(t_{n})) \setminus B$ such that $t_{n} \to 0$ and $\rho(t_{n})(x_{n}) \not \in B$. Since $t_{n} \to 0$ applying item 3 of Remark \ref{disc cerca de B} we have that $\d(x_{n},B) \to 0$, then we can assume $x_{n} \to a \in B$. On the other hand $\rho(t_{n})(x_{n}) \in BP_{0}(\rho(-t_{n}))$, then we can assume $\rho(t_{n})(x_{n}) \to b \in B$.

Let $t \in T$. By definition $B(p,\frac{\delta_{\rho(t)}}{2})  \cap B(q,\frac{\delta_{\rho(t)}}{2}) = \emptyset$ for any pair $p \neq q$, $p,q \in BP_{0}(\rho(t))$. Since the function $Disc$ defined in Remark \ref{continuity in local max} is continuous in $t$, there exists $\delta_{1}^{\prime} \in \R^{+}$ such that $\#(BP_{0}(\rho(s)) \cap B(x,\frac{\delta_{\rho(t)}}{2})) = 1$ for any $s \in (t-\delta_{1}^{\prime},t+\delta_{1}^{\prime})$ and $x \in BP_{0}(\rho(t))$.
Also, if \(\delta_{1}^{\prime}\) is small enough $(t-\delta_{1}^{\prime},t+\delta_{1}^{\prime}) \subset T$ then $B \subset BP_{0}(\rho(s))$ for all $s \in (t-\delta_{1}^{\prime},t+\delta_{1}^{\prime})$, in particular $a, b \in BP_{0}(\rho(s))$.

Let $n$ be such that $t + t_{n} \in (t - \delta_{1}^{\prime} , t + \delta_{1}^{\prime})$, $0 < \d(x_{n},a) < \frac{\delta_{\rho(t)}}{2}$ and $0 < \d(\rho(t_{n})(x_{n}),b) < \frac{\delta_{\rho(t)}}{2}$.
Since $x_{n} \in BP_{0}(\rho(t_{n}))$ and $\rho(t_{n})(x_{n}) \in B(b, \frac{\delta_{\rho(t)}}{2}) \setminus \{b\} \subset Cont(\rho(t))$, then $x_{n} \in BP_{0}(\rho(t + t_{n}))$.
We conclude  $\#BP_{0}(\rho(t + t_{n})) \cap B(a,\frac{\delta_{\rho(t)}}{2}) \geq 2$ where we have a contradiction.

Let us prove item 2.

Suppose the thesis is not true, then there exist sequences $\{t_{n}\} \subset T$, $t_{n} \to 0$ and $x_{n} \in BP_{0}(\rho(t_{n})) \setminus B$ such that $y_{n} = \displaystyle\lim_{x \to x_{n}^{-}} \rho(t_{n})(x) \not \in B$. We can assume $x_{n} \to a \in B$. By Remark \ref{left limit in BP part 1} we have that $y_{n} \in BP_{0}(\rho(-t_{n}))$, so we can assume $y_{n} \to c \in B$.

Let $\delta_{1}^{\prime} \in \R^{+}$ be as in item 1 and \(n \in \N\) such that $t + t_{n} \in (t-\delta_{1}^{\prime},t+\delta_{1}^{\prime})$, $0 < \d(x_{n},a) < \frac{\delta_{\rho(t)}}{4}$ and $0 < \d(y_{n},c) < \frac{\delta_{\rho(t)}}{4}$.

Note that $x_{n} \in Cont(\rho(t + t_{n}))$, in other words
$$\lim_{x \to x_{n}^{-}} \rho(t + t_{n})(x) = \lim_{x \to x_{n}^{+}} \rho(t + t_{n})(x),$$
also since  $y_{n} \in Cont(\rho(t))$ we have that
$$\lim_{x \to x_{n}^{-}} \rho(t + t_{n})(x)= \lim_{y \to y_{n}^{-}} \rho(t)(y) = \rho(t)(y_{n}).$$

Finally, since $\rho(t)(y_{n}) \in Cont(\rho(-t))$ we have
$$\lim_{x \to x_{n}^{-}} \rho(t_{n})(x) = \lim_{x \to x_{n}^{-}} \rho(-t) \circ \rho(t + t_{n})(x) = \lim_{y \to y_{n}} \rho(-t) \circ \rho(t)(y) = y_{n}$$
and
$$ \lim_{x \to x_{n}^{+}} \rho(t_{n})(x) = \lim_{x \to x_{n}^{+}} \rho(-t) \circ \rho(t + t_{n})(x) = \lim_{y \to y_{n}} \rho(-t) \circ \rho(t)(y) = y_{n}$$

where we have a contradiction.
\end{proof}

\begin{coro}
Let \(I \subset \R\) be the interval of the previous Proposition.
The function $(\sharp \circ \rho)\vert_{I}$ is bounded. Also $(\sharp \circ \rho)\vert_{I} \leq 2\# B$.
\end{coro}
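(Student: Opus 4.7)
The plan is to first obtain the bound $2N$ on the dense subset $T \cap I$ by directly exploiting the previous Proposition, and then push the bound to every $t \in I$ by a lower semi-continuity argument.

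For $t \in T \cap I$, I would decompose
\[
BP_{0}(\rho(t)) = \bigl(BP_{0}(\rho(t)) \cap B\bigr) \sqcup \bigl(BP_{0}(\rho(t)) \setminus B\bigr).
\]
The first piece has at most $N = \#B$ elements trivially. For the second, the previous Proposition guarantees $\rho(t)(p) \in B$ for every $p \in BP_{0}(\rho(t)) \setminus B$. Since $\rho(t) \in \ACI$ is a bijection of $\S^{1}$, the assignment $p \mapsto \rho(t)(p)$ injects $BP_{0}(\rho(t)) \setminus B$ into $B$, so this second piece also has at most $N$ elements. Summing, $\sharp(\rho(t)) \leq 2N$ for every $t \in T \cap I$.

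To extend to all of $I$, I would invoke that $T$ is open (Corollary after Remark \ref{continuity in local max}) and dense in $\R$ (noted together with its definition), and that $I$ is an open interval, so $T \cap I$ is dense in $I$. Corollary \ref{continuidad superior de sharp} states that $\sharp$ is continuous from $\ACI$ into $\N$ with the right order topology, which is equivalent to $\sharp$ being lower semi-continuous in the standard sense; composing with the continuous $\rho$ yields lower semi-continuity of $\sharp \circ \rho$ on $\R$. For any $t \in I$, choosing $t_{n} \in T \cap I$ with $t_{n} \to t$,
\[
\sharp(\rho(t)) \leq \liminf_{n \to \infty} \sharp(\rho(t_{n})) \leq 2N.
\]

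There is no real obstacle: the substantive work has already been done in the previous Proposition. The only point to watch is the direction of semi-continuity, since Proposition \ref{persistent discontinuities} only propagates discontinuities upward under perturbation. This is exactly what is needed here, because it lets the uniform bound on the dense subset $T \cap I$ descend to a bound on all of $I$, which is the opposite of how upper bounds usually pass to limits.
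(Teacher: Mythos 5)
Your proof is correct and is essentially the argument the paper leaves implicit (the corollary is stated there without proof): on $T \cap I$ the preceding Proposition lets you inject $BP_{0}(\rho(t)) \setminus B$ into $B$ via the bijection $\rho(t)$, giving $\sharp(\rho(t)) \leq 2\#B$, and the bound passes to all of $I$ by density of $T$ together with the lower semi-continuity of $\sharp$ expressed by its continuity for the right order topology. The semi-continuity direction is used correctly, so no gap remains.
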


By item 4 of Remark \ref{disc cerca de B} and item 1 of Corolary \ref{B finite} for $t$ close to 0 the intervals $I_{\rho(t)}$ can be divided into two classes, the small and the big ones.

Let \(I\) be an interval of \(0\) such that for any \(t \in I\) verifies \(BP_{0}(\rho(t)) \subset B(B,\frac{\delta_{B}}{8})\).
 
\begin{defi}
For any $t \in I$ we say that an interval $I_{\rho(t)}$ is of:
\begin{itemize}
\item Type 1 if $\mu(I_{\rho(t)}) \leq \frac{\delta_{B}}{4}$.
\item Type 2 if $\mu (I_{\rho(t)}) \geq \frac{\delta_{B}}{2}$.
\end{itemize}
\end{defi}

\subsection{Dynamics of intervals of type 1 and type 2}


We are going to show that for \(t\) close to \(0\) and $b \in B$ we have $\# (BP_{0}(\rho(t)) \cap B^{*}(b,\frac{\delta_{B}}{2})_{+}) \leq 1$ and $\# (BP_{0}(\rho(t)) \cap B^{*}(b,\frac{\delta_{B}}{2})_{-}) \leq 1$. 
In particular \(\#  (BP_{0}(\rho(t)) \cap (B_{i}\setminus B)) \leq 2\).

\begin{prop}
There exists $J$ an interval of 0 such that $\# (BP_{0}(\rho(t)) \cap B^{*}(b,\frac{\delta_{B}}{2})_{+}) \leq 1$ for all $t \in J \cap T$ and $b \in B$.
Also $\# (BP_{0}(\rho(t)) \cap B^{*}(b,\frac{\delta_{B}}{2})_{-}) \leq 1$.
\end{prop}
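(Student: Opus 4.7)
The plan is to argue by contradiction, combining the constraint on discontinuities outside $B$ given by Proposition~\ref{discontinuidades que no estan en B van a B} with the measure-distortion bound of Proposition~\ref{function close to identity chenge little the measure}. I will first choose $J$ to be an interval of $0$ contained in the interval $I$ of Proposition~\ref{discontinuidades que no estan en B van a B} and small enough that $\rho(J)$ lies in the neighbourhood of the identity provided by Proposition~\ref{function close to identity chenge little the measure} applied with $\delta = \delta_{B}/4$; such a $J$ exists by continuity of $\rho$ at $0$. In particular, for every $t \in J$ and every interval $I^{\prime} \subset \S^{1}$ one has $|\mu(I^{\prime}) - \mu(\rho(t)(I^{\prime}))| \leq \delta_{B}/4$.

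Now suppose, for contradiction, that some $t \in J \cap T$, some $b \in B$, and two points $p_{1} < p_{2}$ in $BP_{0}(\rho(t)) \cap B^{*}(b,\delta_{B}/2)_{+}$ exist; I may assume they are consecutive in $BP_{0}(\rho(t))$, so that $[p_{1},p_{2})$ is a maximal continuity interval of $\rho(t)$ with $\mu([p_{1},p_{2})) < \delta_{B}/2$. Since $B^{*}(b,\delta_{B}/2)_{+}$ omits $b$ and every other element of $B$ lies at distance at least $\delta_{B}$ from $b$, neither $p_{1}$ nor $p_{2}$ belongs to $B$, so Proposition~\ref{discontinuidades que no estan en B van a B} gives $\rho(t)(p_{1}) = b_{i}$ and $\lim_{x \to p_{2}^{-}} \rho(t)(x) = b_{j}$ for some $b_{i}, b_{j} \in B$. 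As $\rho(t)$ restricted to $[p_{1},p_{2})$ is an orientation-preserving homeomorphism onto its image, the image is the counterclockwise arc $[b_{i}, b_{j})$.

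The measure bound then gives $\mu([b_{i}, b_{j})) \leq \mu([p_{1},p_{2})) + \delta_{B}/4 < 3\delta_{B}/4 < \delta_{B}$, which forces $b_{i} \neq b_{j}$ (otherwise the arc would be empty or all of $\S^{1}$) and excludes any $b^{\prime} \in B$ from the open arc $(b_{i}, b_{j})$: such a $b^{\prime}$ would lie in $BP_{0}(\rho(-t)) = \rho(t)(BP_{0}(\rho(t)))$, hence be the image of some $q \in BP_{0}(\rho(t)) \cap (p_{1}, p_{2})$, contradicting the consecutiveness of $p_{1}$ and $p_{2}$. Thus $[b_{i}, b_{j})$ is a counterclockwise arc joining two distinct elements of $B$ without any interior element of $B$, so its length is at least $\delta_{B}$, a contradiction. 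The analogous statement for $B^{*}(b,\delta_{B}/2)_{-}$ follows by the symmetric argument, working with $\lim_{x \to p_{1}^{-}} \rho(t)(x)$ and $\rho(t)(p_{2})$ in place of $\rho(t)(p_{1})$ and $\lim_{x \to p_{2}^{-}} \rho(t)(x)$. The most delicate point is the final arc-length lower bound, which relies on the minimality of $\delta_{B}$ together with the consecutiveness argument that rules out interior points of $B$ inside $[b_{i}, b_{j})$.
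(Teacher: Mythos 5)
Your argument is correct and takes essentially the same route as the paper's own proof: in both, Proposition \ref{discontinuidades que no estan en B van a B} forces the image of the maximal continuity interval between two discontinuities lying in the half-ball to be an arc $[b_{i},b_{j})$ with endpoints in $B$, hence of measure at least $\delta_{B}$, while Proposition \ref{function close to identity chenge little the measure} shows this image must have measure less than $\delta_{B}$ (the paper phrases this with sequences $t_{n}\to 0$, you with a fixed quantitative neighbourhood $J$), giving the contradiction. The only cosmetic difference is your extra step excluding points of $B$ from the interior of $(b_{i},b_{j})$, which is superfluous since $\mu([b_{i},b_{j})) \geq \d(b_{i},b_{j}) \geq \delta_{B}$ already.
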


\begin{proof}
We prove the thesis in $B^{*}(b,\frac{\delta_{B}}{2})_{+}$, the case $B^{*}(b,\frac{\delta_{B}}{2})_{-}$ is analogous.

Suppose the thesis is not true, then there exist sequences $\{x_{n}\},\{ y_{n}\}$ of $\S^{1}$ and $t_{n} \in T$ such that $t_{n} \to 0$, $x_{n} \neq y_{n}$, $x_{n},y_{n} \in BP_{0}(\rho(t_{n})) \cap B^{*}(b,\frac{\delta_{B}}{2})_{+}$. Then by Remark \ref{disc cerca de B} we have that $x_{n},y_{n} \to b \in B$. We can assume $(b,x_{n},y_{n})$ is an ordered 3-tuple and $\rho(t_{n})$ is continuous in $[x_{n},y_{n})$, i.e. $[x_{n},y_{n}) = I_{\rho(t_{n})}(i_{n})$ for some $i_{n}$. 

Let \(I\) be an interval such that verifies Proposition \ref{discontinuidades que no estan en B van a B}, if $t_{n} \in I \cap T$ we have $\rho(t_{n})[x_{n},y_{n}) = [a_{n},c_{n})$ with $a_{n},c_{n} \in B$.

The sequence $d_{n} =  \mu(\rho(-t_{n})([a_ {n},c_{n})))$ converges to 0, but $ \mu(a_ {n},c_{n}) \geq \delta_{B} > 0$. Since $\rho(-t_{n}) \to \rho(0) = id$ which contradicts the Proposition \ref{function close to identity chenge little the measure}. 
\end{proof}

\begin{obs} \label{type 1 intersection B = 1} 
If $I_{\rho(t)}(i)$ is an interval of type 1, then $\overline{I_{\rho(t)}(i)} \cap B \neq \emptyset$. Moreover $\# ( \overline{I_{\rho(t)}(i)} \cap B) = 1$.

In the Example \ref{ejemplo con dos discontinuidades en un intervalo} we show that this is not true for intervals of type 2,
\end{obs}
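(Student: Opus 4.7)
I would prove both assertions via a metric argument, exploiting three facts: the endpoints $p_i, p_{i+1}$ of $I_{\rho(t)}(i) = [p_i, p_{i+1})$ lie in $BP_0(\rho(t)) \subset B(B, \delta_B/8)$ by the defining property of the interval $I$ of $0$; the separation $\d(b, b') \geq \delta_B$ for distinct $b, b' \in B$; and the one-sided bound from the preceding Proposition, $\#(BP_0(\rho(t)) \cap B^*(b, \delta_B/2)_{\pm}) \leq 1$. I would work on a small enough neighbourhood of $0$ (restricting to $I \cap J \cap T$) so that all three are simultaneously available.

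Uniqueness is essentially immediate. Two distinct points $b \neq b''$ of $\overline{I_{\rho(t)}(i)} \cap B$ would satisfy $\d(b, b'') \leq \mu(\overline{I_{\rho(t)}(i)}) \leq \delta_B/4$ by Remark \ref{interval measure-distance endpoints} (since the measure in question is below $1/2$), contradicting the definition of $\delta_B$.

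For existence I would argue by contradiction: assume $\overline{I_{\rho(t)}(i)} \cap B = \emptyset$, and pick $b, b' \in B$ with $\d(p_i, b), \d(p_{i+1}, b') \leq \delta_B/8$. If $b \neq b'$, the triangle inequality forces $\d(b, b') \leq \delta_B/8 + \delta_B/4 + \delta_B/8 = \delta_B/2 < \delta_B$, contradicting the separation of $B$. If $b = b'$, then the assumption $b \notin [p_i, p_{i+1}]$ forces both endpoints to lie on one common side of $b$ within distance $\delta_B/8$; but this produces two distinct points of $BP_0(\rho(t))$ in either $B^*(b, \delta_B/2)_{+}$ or $B^*(b, \delta_B/2)_{-}$, contradicting the preceding Proposition.

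The only mildly delicate point is this second case of existence, which relies on the one-sided bound whose proof needed $t \in T$; the remedy is to insist from the outset that the classification into type 1 and type 2 is made on $I \cap J \cap T$. Everything else reduces to metric bookkeeping, comparing the diameter $\delta_B/4$ of a type 1 interval against the separation $\delta_B$ of $B$.
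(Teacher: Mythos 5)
Your argument is correct, and it is essentially the justification the paper intends: the remark is stated without proof immediately after the one-sided bound $\#\bigl(BP_{0}(\rho(t)) \cap B^{*}(b,\tfrac{\delta_{B}}{2})_{\pm}\bigr) \leq 1$, and your proof just combines that bound with the facts that the endpoints of a type~1 interval lie in $BP_{0}(\rho(t)) \subset B(B,\tfrac{\delta_{B}}{8})$, that its measure is at most $\tfrac{\delta_{B}}{4}$, and that distinct points of $B$ are $\delta_{B}$-separated. Your caveat that the type~1/type~2 classification should be taken on $I \cap J \cap T$ (so the one-sided bound is available) is consistent with how the paper actually uses the remark, namely only for parameters in $T$ near $0$.
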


For \(t\) close to 0 we have that \(U(id,\rho(t))[\frac{\delta_{B}}{8}] \leq \frac{\delta_{B}}{8}\) (Proposition \ref{convergence by sets}), we conclude that  \(\rho(t)(I_{\rho(t)}(i)) \cap I_{\rho(t)}(i) \neq \emptyset\) if \(I_{\rho(t)}(i)\) is an interval of type 2.
Since $\rho(t)(I_{\rho(t)}(i)) = I_{\rho(-t)}(j)$ we can conclude the following result.

\begin{obs} \label{type 2 no change}
There exists $I$ an interval of 0 such that, for any $t \in J \cap T$, it holds,
\begin{enumerate}
\item If $I_{\rho(t)}(i)$ is an interval of type 1 for $\rho(t)$, then exists $j$ such that $I_{\rho(-t)}(j) = \rho(t)(I_{\rho(t)}(i))$ is an interval of type 1 of $\rho(-t)$.
\item 
If $I_{\rho(t)}(i)$ is an interval of type 2 and $I_{\rho(t)}(i) \subset B_{k}$ then $\rho(t)(I_{\rho(t)}(i)) \subset B_{k}$.
\end{enumerate}
\end{obs}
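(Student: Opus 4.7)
The plan is to establish a \emph{dichotomy} --- that for $t \in J \cap T$ with $J$ small, every maximal interval of continuity of $\rho(t)$ is forced to be either of type 1 or of type 2 --- and then to use Proposition \ref{function close to identity chenge little the measure} (the measure of an interval is barely changed) to handle item 1 and Proposition \ref{convergence by sets} (the measure of the displacement set is small) to handle item 2.

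For the dichotomy, take $J$ a symmetric interval of $0$ so that $BP_{0}(\rho(t)) \subset B(B, \delta_{B}/8)$ for every $t \in J$ (item 4 of Remark \ref{disc cerca de B}) and $B \subset BP_{0}(\rho(t))$ for every $t \in J \cap T$ (Corollary \ref{B finite}). Let $I_{\rho(t)}(i) = [p_{i}, p_{i+1})$ be a maximal interval of continuity. Both endpoints lie inside $B(B, \delta_{B}/8)$, and no element of $B$ lies in the interior of the interval. Hence either $p_{i}, p_{i+1}$ both lie in $B(b_{k}, \delta_{B}/8)$ for the same $k$, forcing $\mu(I_{\rho(t)}(i)) \leq \delta_{B}/4$ (type 1), or they lie in $B(b_{k}, \delta_{B}/8)$ and $B(b_{k+1}, \delta_{B}/8)$ respectively, forcing $\mu(I_{\rho(t)}(i)) \geq \delta_{B} - \delta_{B}/4 \geq \delta_{B}/2$ (type 2).

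For item 1, I would shrink $J$ via Proposition \ref{function close to identity chenge little the measure} and the continuity of $\rho$ so that $|\mu(J') - \mu(\rho(t)(J'))| < \delta_{B}/8$ for every interval $J' \subset \S^{1}$ and every $t \in J$. If $I_{\rho(t)}(i)$ is of type 1, Remark \ref{left limit in BP part 1} yields $\rho(t)(I_{\rho(t)}(i)) = I_{\rho(-t)}(j)$ for some $j$, and
\[
\mu(I_{\rho(-t)}(j)) \leq \mu(I_{\rho(t)}(i)) + \delta_{B}/8 \leq 3\delta_{B}/8 < \delta_{B}/2,
\]
ruling out type 2. Applying the dichotomy to $-t \in T$ forces it to be of type 1.

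For item 2, shrink $J$ further using Proposition \ref{convergence by sets} so that $\mu(U(\rho(t), id)[\delta_{B}/8]) \leq \delta_{B}/8$ for every $t \in J$. If $I_{\rho(t)}(i)$ is of type 2 and $I_{\rho(t)}(i) \subset B_{k}$, then
\[
\mu\bigl(I_{\rho(t)}(i) \setminus (B(B, \delta_{B}/8) \cup U(\rho(t), id)[\delta_{B}/8])\bigr) \geq \delta_{B}/2 - \delta_{B}/4 - \delta_{B}/8 > 0,
\]
so one can pick $x \in I_{\rho(t)}(i)$ with $\d(x, B) > \delta_{B}/8$ and $\d(\rho(t)(x), x) \leq \delta_{B}/8$. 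The first bound gives $B(x, \delta_{B}/8) \subset B_{k}$, so $\rho(t)(x) \in B_{k}$; on the other hand $\rho(t)(x) \in \rho(t)(I_{\rho(t)}(i)) = I_{\rho(-t)}(j) \subset B_{k'}$ for some $k'$ (by the Remark following Notation \ref{Notation B_i}). Hence $k = k'$ and $\rho(t)(I_{\rho(t)}(i)) \subset B_{k}$.

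The main delicate point is the dichotomy: the ingredient $B \subset BP_{0}(\rho(t))$ is essential because it rules out an interval of continuity with both endpoints near the same $b_{k}$ but wrapping the long way around the circle, which would otherwise produce an interval of intermediate length and break the classification. Once the dichotomy is in place, items 1 and 2 are cheap consequences of the two previously established continuity estimates.
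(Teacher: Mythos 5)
Your proposal is correct and takes essentially the same route as the paper's terse justification: item 2 is exactly the paper's displacement-set argument ($\mu(U(id,\rho(t))[\delta_{B}/8])\leq\delta_{B}/8$ combined with $\rho(t)(I_{\rho(t)}(i))=I_{\rho(-t)}(j)$ from Remark \ref{left limit in BP part 1}), and for item 1 you supply the type dichotomy together with Proposition \ref{function close to identity chenge little the measure}, which is the same measure estimate the paper invokes at the analogous step of the proposition immediately preceding this remark. One cosmetic point: the inclusion $B\subset BP_{0}(\rho(t))$ is not actually needed for the dichotomy, since a continuity interval whose endpoints both lie near the same $b_{k}$ but which wraps the long way around would have measure at least $1-\delta_{B}/4\geq\delta_{B}/2$ and hence be of type 2 anyway; using it is harmless, as you only apply the dichotomy for $t\in T$ (and $-t\in T$), where it holds.
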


To construct the domain $D$ of the main theorem, we have to cut and glue the intervals $B_{j}$ in a  ``good order'' and the map $f$  fulfills that role.
More precisely, \(f^{-1}\) of each connected component of \(D\) will be $\tilde{C_{j}} =  \cup_{k \in K(j)} B_{k}$, where $K(j)$ is a subset of $\{1,...,N\}$.

Any interval \([x_{1},x_{2})\) of type 1 of \(\rho(t)\) has exactly one of this endpoints in \(B\).
Since \([y_{1},y_{2}) = \rho(t)([x_{1},x_{2}))\) is an interval of type 1 for \(\rho(-t)\), applying Proposition \ref{discontinuidades que no estan en B van a B} and Remark \ref{type 1 intersection B = 1} we have that \(x_{1} \in B\) if and only if \(y_{1} \notin B\). This result can be translated to semi intervals of \(B_{i}\).

\begin{obs} \label{intervalos chicos cambian de lado}
If $I_{\rho(t)}(i)$ is an interval of type 1 and $I_{\rho(t)}(i)  \subset (B_{j})_{+}$, then  $\rho(t)(I_{\rho(t)}(i)) \subset (B_{j_{1}})_{-}$ for some $j_{1}$. Analogous if $I_{\rho(t)}(i)$ is an interval of type 1 and $I_{\rho(t)}(i)  \subset (B_{j})_{-}$, then  $\rho(t)(I_{\rho(t)}(i)) \subset (B_{j_{2}})_{+}$ for some $j_{2}$.
\end{obs}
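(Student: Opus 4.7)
The plan is to treat both halves symmetrically and reduce everything to Proposition \ref{discontinuidades que no estan en B van a B} together with the size constraint on type 1 intervals. Write $I_{\rho(t)}(i) = [x_{1},x_{2})$; by Remark \ref{type 1 intersection B = 1} the closure $[x_{1},x_{2}]$ meets $B$ in exactly one point.

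I would handle first the case $I_{\rho(t)}(i) \subset (B_{j})_{+}$. Since $(B_{j})_{+}$ is the right half of $B_{j} = [b_{j},b_{j+1})$ and consecutive points of $B$ are at distance at least $\delta_{B}$, the closure of $(B_{j})_{+}$ contains no element of $B$ besides $b_{j+1}$. Hence the unique $B$-point in $[x_{1},x_{2}]$ is forced to be $x_{2} = b_{j+1}$, and in particular $x_{1} \in BP_{0}(\rho(t)) \setminus B$. Item 1 of Proposition \ref{discontinuidades que no estan en B van a B} then yields $\rho(t)(x_{1}) \in B$; by right continuity this value equals the left endpoint $y_{1}$ of the image interval $\rho(t)([x_{1},x_{2})) = [y_{1},y_{2})$, so $y_{1} = b_{j_{1}}$ for some $j_{1}$.

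To close the first case, I would place $[y_{1},y_{2})$ inside $(B_{j_{1}})_{-}$. By item 1 of Remark \ref{type 2 no change}, $[y_{1},y_{2})$ is a type 1 interval for $\rho(-t)$, and by the observation following Notation \ref{Notation B_i} it is contained in a single $B_{k}$; since its left endpoint is $b_{j_{1}}$, necessarily $k = j_{1}$. The type 1 bound gives $\mu([y_{1},y_{2})) \leq \delta_{B}/4$, while $\mu(B_{j_{1}}) \geq \d(b_{j_{1}},b_{j_{1}+1}) \geq \delta_{B}$, so $y_{2} \leq b_{j_{1}} + \mu(B_{j_{1}})/2$, which puts $[y_{1},y_{2}) \subset (B_{j_{1}})_{-}$.

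The case $I_{\rho(t)}(i) \subset (B_{j})_{-}$ is entirely analogous: now $x_{1} = b_{j} \in B$ and $x_{2} \in BP_{0}(\rho(t)) \setminus B$, so item 2 of Proposition \ref{discontinuidades que no estan en B van a B} applied to $x_{2}$ gives $y_{2} = \lim_{x \to x_{2}^{-}} \rho(t)(x) \in B$, say $y_{2} = b_{j_{2}+1}$; the same size comparison then forces $[y_{1},y_{2}) \subset (B_{j_{2}})_{+}$. There is no real conceptual obstacle, since the earlier propositions have already done the hard work; the only point requiring care is identifying which element of $B$ the closure $[x_{1},x_{2}]$ captures, which is dictated by the half-interval definition together with the $\delta_{B}$-separation of the points of $B$.
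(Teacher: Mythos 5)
Your proof is correct and takes essentially the same route as the paper's: use Remark \ref{type 1 intersection B = 1} to pin down which endpoint of the type 1 interval lies in $B$, apply Proposition \ref{discontinuidades que no estan en B van a B} (together with Remark \ref{type 2 no change}) to place the corresponding endpoint of the image in $B$, and then locate the image inside the correct half-interval. The only difference is that you spell out the final ``translation to semi-intervals'' with the explicit $\delta_{B}/4$ versus $\delta_{B}/2$ measure comparison, which the paper leaves implicit.
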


\begin{prop} \label{t o -t} There exists $I \subset \R$ interval of 0, such that any $t \in I \cap T$ verifies,

\begin{enumerate}
\item if $I _{\rho(t)}(i) \subset (B_{j})_{-}$ is an interval of type 1 of $\rho(t)$ then $\rho(-t)$ has not an interval of type 1 in $(B_{j})_{-}$.

\item if $I _{\rho(t)}(i) \subset (B_{j})_{+}$ is an interval of type 1 of $\rho(t)$ then $\rho(-t)$ has not an interval of type 1 in $(B_{j})_{+}$.
\end{enumerate}
\end{prop}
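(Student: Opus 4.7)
The plan is to prove both items by contradiction, exploiting that a type 1 interval anchored at a $B$-point pins down the value (or left limit) of $\rho(\pm t)$ at that $B$-point via inversion. I treat item 1 in detail; item 2 is parallel, with $b_j$ and right-continuity replaced by $b_{j+1}$ and left limits, and the adjacent type 2 interval placed to the left of the right-sided type 1 interval instead of to the right of the left-sided one.

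Take $t \in I \cap T$ with $I$ small enough that the previous results apply, and suppose $\rho(t)$ has a type 1 interval $[b_j, p) \subset (B_j)_-$ whose image under $\rho(t)$ is $[\alpha, b_{k+1}) \subset (B_k)_+$. Let $[p, p')$ be the next interval of continuity of $\rho(t)$. Using the type 1 measure bound $p - b_j \leq \delta_B/4$, together with the analogous bound $b_{j+1} - \alpha' \leq \delta_B/4$ should a right-sided type 1 $[\alpha', b_{j+1})$ of $\rho(t)$ also be present, an easy computation gives $\mu([p, p')) \geq \delta_B/2$ in every case, so $[p, p')$ is type 2. By Remark \ref{type 2 no change} its image lies inside $B_j$, and Proposition \ref{discontinuidades que no estan en B van a B} applied to the non-permanent discontinuity $p$ gives $\rho(t)(p) \in B$. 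Since $\rho(t)(p)$ is the left endpoint of an arc contained in the semi-open interval $B_j = [b_j, b_{j+1})$, and that arc cannot wrap around $\S^1$, the only consistent choice is $\rho(t)(p) = b_j$. Inverting the continuous bijection $\rho(t)\vert_{[p,p')}$ at $b_j$ yields the key identity $\rho(-t)(b_j) = p$.

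Now assume, for contradiction, that $\rho(-t)$ also has a type 1 interval $[b_j, q) \subset (B_j)_-$ with image $[\beta, b_{k'+1}) \subset (B_{k'})_+$. Right-continuity of $\rho(-t)$ at $b_j$ identifies $\beta = \rho(-t)(b_j)$, hence $p = \beta$. But the type 1 measure bounds force $p < b_j + \delta_B/4$, strictly to the left of the midpoint of $B_j$, while $\beta > b_{k'+1} - \delta_B/4$ lies strictly to the right of the midpoint of $B_{k'}$. If $j \neq k'$ the sets $B_j$ and $B_{k'}$ are disjoint, and if $j = k'$ the halves $(B_j)_-$ and $(B_j)_+$ intersect only at the midpoint, which is excluded by the strict inequalities. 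Either way $p \neq \beta$, contradicting $p = \beta$, which proves item 1. For item 2, the left-adjacent type 2 interval $[r_t, \alpha')$ to the right-sided type 1 $[\alpha', b_{j+1}) \subset (B_j)_+$ of $\rho(t)$ lives inside $B_j$, and Proposition \ref{discontinuidades que no estan en B van a B} together with the non-wrapping argument gives $\lim_{x \to \alpha'^{-}}\rho(t)(x) = b_{j+1}$; inversion then yields $\lim_{y \to b_{j+1}^{-}}\rho(-t)(y) = \alpha'$, and a second type 1 $[\alpha'', b_{j+1}) \subset (B_j)_+$ of $\rho(-t)$ with image $[b_{k'}, q') \subset (B_{k'})_-$ would force this same left limit to equal the right endpoint $q'$, placing $\alpha' \in (B_j)_+$ equal to $q' \in (B_{k'})_-$, which is again excluded by the disjointness of opposite halves.

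The step I expect to require the most care is the pinning of $\rho(t)(p)$ to $b_j$ (rather than $b_{j+1}$): $\rho(t)(p)$ lies in $\overline{B_j} \cap B = \{b_j, b_{j+1}\}$, and one must exclude $b_{j+1}$ by using that the continuous orientation-preserving image of $[p, p')$ is an arc inside the semi-open $B_j$ that starts at $\rho(t)(p)$, so it cannot end at the very point at which it begins without wrapping around $\S^1$. After that observation, the argument is a short bookkeeping with right-continuity, the type 1 measure constraints, and the disjointness of opposite half-intervals.
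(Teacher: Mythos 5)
Your proof is correct and follows essentially the same route as the paper: you pin down $\rho(-t)(b_j)$ as the right endpoint $p$ of the type 1 interval of $\rho(t)$ (via the adjacent type 2 interval, Remark \ref{type 2 no change} and Proposition \ref{discontinuidades que no estan en B van a B}), and then derive a contradiction because a type 1 interval of $\rho(-t)$ in $(B_j)_-$ would force $\rho(-t)(b_j)$ into a right half-interval (the content of Remark \ref{intervalos chicos cambian de lado}), incompatible with $p\in(B_j)_-$. The extra bookkeeping you add (the measure bound showing the adjacent interval is type 2, and excluding $b_{j+1}$ — which in fact follows at once from $B_j\cap B=\{b_j\}$ since $B_j$ is half-open) only spells out steps the paper cites or leaves implicit.
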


\begin{proof}
We will prove the item 1, the item 2 is analogous.

Let $[b_{i},a)$ an interval of type 1 for $\rho(t)$ then there exists $c$ such that $[a,c) \subset B_{i}$ is an interval of type 2 for $\rho(t)$. Applying Proposition \ref{discontinuidades que no estan en B van a B} item 1 and Remark \ref{type 2 no change} item 2 we have $\rho(t)(a) = b_{i}$.

Suppose that $[b_{i},a^{\prime})$ is an interval of type 1 for $\rho(-t)$. Applying Proposition \ref{intervalos chicos cambian de lado} there exists $k$ such that $\rho(-t)(b_{i}) \subset (B_{k})_{+}$ but $\rho(-t)(b_{i}) = a \in (B_{i})_{-}$. This is a contradiction.
\end{proof}

Since $B$ is finite we can define the following functions.

\begin{defi}
Let $\sigma_{1}, \sigma_{2}, \tau_{1},\tau_{2} : \{1,...,N\} \to \{1,..,N\}$  be functions and  $\{t_{q}\} \subset \R^{+} \cap T$ be a sequence and such that
\begin{enumerate}
\item \(t_{q} \to 0\)
\item  $\rho(t_{q})(b_{i}) \in B_{\sigma_{1}(i)}$ and $\lim_{x \to (b_{i+1})^{-}}\rho(t_{q})(x) \in B_{\tau_{1}(i)}$ for all $q$,
\item  $\rho(-t_{q})(b_{i}) \in B_{\sigma_{2}(i)}$ and $\lim_{x \to (b_{i+1})^{-}}\rho(-t_{q})(x) \in B_{\tau_{2}(i)}$ for all $q$.
\end{enumerate}
\end{defi}

Note that $\sigma_{1}(i) \neq i$ if and only if there exists \(c \in (B_{i})_{-}\) such that \([b_{i},c) \subset (B_{i})_{-}\) is an interval of type 1. Then applying Proposition \ref{t o -t} we have that if $\sigma_{1}(i) \neq i$ then $\sigma_{2}(i) = i$ also if $\sigma_{2}(i) \neq i$ then $\sigma_{1}(i) = i$. The functions $\tau_{i}$ verifies the same properties.

\begin{defi}
We define $\tau, \sigma:\{1,...,N\} \to \{1,...,N\}$ by

$\tau(i) = \left \{ \begin{matrix} \tau_{1}(i) & \mbox{if } \tau_{1}(i) \neq i
\\ \tau_{2}(i) & \mbox{if } \tau_{2}(i) \neq i
\\ i & \mbox{if } \tau_{1}(i) = \tau_{2}(i) = i
\end{matrix}\right. ,
\hspace{1cm}
\sigma(i) = \left \{ \begin{matrix} \sigma_{1}(i) & \mbox{if } \sigma_{1}(i) \neq i
\\ \sigma_{2}(i) & \mbox{if } \sigma_{2}(i) \neq i
\\ i & \mbox{if } \sigma_{1}(i) = \sigma_{2}(i) = i
\end{matrix}\right.$
\end{defi}

\begin{obs} \label{pseudoinversa de salto}\

\begin{enumerate}
\item If $\tau(i) \neq i$ then $\sigma (\tau (i)) = i$.

\item If $\sigma(i) \neq i$ then $\tau(\sigma(i)) = i$.
\end{enumerate}
\end{obs}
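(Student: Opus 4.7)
The plan is to exploit the symmetric structure of the definitions together with the exclusivity already recorded just before Definition~5.13: $\sigma_1(i)\ne i$ forces $\sigma_2(i)=i$, and analogously for $\tau$. The two items of the remark are interchanged by the substitution $t_q\leftrightarrow -t_q$, which swaps $\sigma_1\leftrightarrow\sigma_2$ and $\tau_1\leftrightarrow\tau_2$, so it is enough to establish item 2 in the subcase $\sigma_1(i)\ne i$; the remaining subcase and item 1 follow by the same geometric recipe read in the mirror direction (starting from a type-1 interval ending at $b_{i+1}^-$ rather than starting at $b_i$).

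So assume $\sigma_1(i)=j\ne i$, i.e.\ $\rho(t_q)(b_i)\in B_j$ for every $q$. If no type-1 interval of $\rho(t_q)$ started at $b_i$, the interval $[b_i,\cdot)$ of $\rho(t_q)$ would be of type 2, and Remark~\ref{type 2 no change}~(2) would place its image inside $B_i$, giving $\rho(t_q)(b_i)\in B_i$, a contradiction. Hence there exists a type-1 interval $[b_i,c_q)\subset(B_i)_-$ of $\rho(t_q)$. By Remark~\ref{intervalos chicos cambian de lado} its image lies in $(B_{j'})_+$ for some $j'$; since $\rho(t_q)(b_i)$ is the left endpoint of that image, the condition $\rho(t_q)(b_i)\in B_j$ forces $j'=j$. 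The image $[\rho(t_q)(b_i),\lim_{x\to c_q^-}\rho(t_q)(x))$ is a type-1 interval of $\rho(-t_q)$ contained in $(B_j)_+$; by Remark~\ref{type 1 intersection B = 1} its closure meets $B$, and since $B\cap\overline{(B_j)_+}=\{b_{j+1}\}$, the left limit equals $b_{j+1}$.

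Applying the inverse $\rho(-t_q)$ then gives $\rho(-t_q)\bigl([\rho(t_q)(b_i),b_{j+1})\bigr)=[b_i,c_q)$, so the left limit of $\rho(-t_q)$ at $b_{j+1}$ equals $c_q\in(B_i)_-\subset B_i$; that is, $\tau_2(j)=i$. Since $i\ne j$, exclusivity forces $\tau_1(j)=j$, and the definition of $\tau$ yields $\tau(j)=\tau_2(j)=i$, i.e.\ $\tau(\sigma(i))=i$. The subcase $\sigma_2(i)\ne i$ is identical after the swap $t_q\leftrightarrow -t_q$ (it produces $\tau_1(j)=i$ and hence again $\tau(\sigma(i))=i$), and item 1 is handled by the mirror argument. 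The only subtle step is the clean localization of the image landing exactly on $b_{j+1}$; once that is in place through Remarks~\ref{intervalos chicos cambian de lado} and~\ref{type 1 intersection B = 1}, the pseudo-inverse identities follow by inspection.
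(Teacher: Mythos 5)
The paper states this remark without proof, so there is nothing to match line by line; your argument supplies exactly the justification the paper intends, and your detailed case ($\sigma_{1}(i)=j\neq i$) is correct: the dichotomy via Remark \ref{type 2 no change}, the side-switching of type-1 intervals (Remark \ref{intervalos chicos cambian de lado}), the unique point of $B$ in the closure of the image (Remark \ref{type 1 intersection B = 1}), and the exclusivity coming from Proposition \ref{t o -t} are precisely the ingredients used in the proof of Proposition \ref{t o -t} and again in the continuity lemma of Section 5, and your localization of the image's right endpoint at $b_{j+1}$ is sound because $\rho(t_{q})(b_{i})\in B_{j}$ rules out the left endpoint.

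One caveat you should make explicit rather than dismiss as ``identical by mirror symmetry'': $\tau_{1},\tau_{2}$ are defined through left limits, not values, and this breaks the symmetry at one point. In the mirror of your first step, if the maximal continuity interval of $\rho(t_{q})$ ending at $b_{i+1}$ is of type 2, Remark \ref{type 2 no change} only puts its image inside $B_{i}$, hence the left limit at $b_{i+1}$ lies in $\overline{B_{i}}=B_{i}\cup\{b_{i+1}\}$; the case where it equals $b_{i+1}$ is not excluded by your dichotomy. Read literally, that case assigns $\tau_{1}(i)=i+1\neq i$ with no type-1 interval present, and then item 1 can actually fail: in Example \ref{ejemplo con dos discontinuidades en un intervalo}, $\rho(t)$ is the identity on $[\frac{1}{4},\frac{5}{8})$, the left limit at $\frac{5}{8}$ is $\frac{5}{8}\in B$, yet $\rho(t)(\frac{5}{8})$ lies in $[0,\frac{1}{4})$. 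The statement (and the exclusivity note you invoke, which your final step ``$\tau_{1}(j)=j$, hence $\tau(j)=\tau_{2}(j)$'' also needs) is only true under the evidently intended convention that a left limit equal to $b_{k+1}$ counts as landing in $B_{k}$. With that convention stated, your mirror argument for item 1 does go through, with the unique $B$-point of the image now at its left endpoint, so that $\rho(-t_{q})(b_{j})=c_{q}\in B_{i}$ gives $\sigma_{2}(j)=i$ and hence $\sigma(\tau(i))=i$.
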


By Remark \ref{type 2 no change} we get the following result.

\begin{obs} \label{pre inavriant 1}
The functions $\tau$ and $\sigma$ verify that
\begin{enumerate}
\item $\rho(t_{q})(B_{i}) \subset B_{\tau_{1}(i)} \cup B_{i} \cup B_{\sigma_{1}(i)} \subset B_{\tau(i)} \cup B_{i} \cup B_{\sigma(i)}$,

\item  $\rho(-t_{q})(B_{i}) \subset B_{\tau_{2}(i)} \cup B_{i} \cup B_{\sigma_{2}(i)} \subset B_{\tau(i)} \cup B_{i} \cup B_{\sigma(i)}$.
\end{enumerate}
\end{obs}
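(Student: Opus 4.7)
The plan is to analyze the maximal intervals of continuity of $\rho(t_q)$ inside each $B_i = [b_i, b_{i+1})$ and track their images. By the preceding proposition bounding $\#(BP_{0}(\rho(t_q)) \cap B^{*}(b,\tfrac{\delta_B}{2})_{\pm})$ by $1$ at each $b \in B$, together with Remark \ref{type 1 intersection B = 1} (each type 1 interval meets $B$ in exactly one endpoint) and Remark \ref{type 2 no change}, the decomposition of $B_i$ under $\rho(t_q)$ consists of at most three pieces: possibly a type 1 piece $[b_i, a)$ on the left, a type 2 piece in the middle, and possibly a type 1 piece $[c, b_{i+1})$ on the right.

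I then control each image. The middle type 2 piece maps into $B_i$ by Remark \ref{type 2 no change} item 2. For the left type 1 piece $[b_i, a)$, first note $a \notin B$, else $a = b_{i+1}$ and the whole $B_i$ would be type 1, contradicting $\mu(B_i) \geq \delta_B$. Thus $a \in BP_0(\rho(t_q)) \setminus B$, and Proposition \ref{discontinuidades que no estan en B van a B} item 2 yields $\lim_{x \to a^-} \rho(t_q)(x) \in B$. Since $\rho(t_q)$ is orientation-preserving on its intervals of continuity, the image of $[b_i,a)$ is $[\rho(t_q)(b_i), \lim_{x \to a^-} \rho(t_q)(x))$, and by Remark \ref{type 2 no change} item 1 this image is itself a type 1 interval of $\rho(-t_q)$, so has measure at most $\delta_B/4$. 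Its starting endpoint lies in $B_{\sigma_1(i)}$ by definition of $\sigma_1$, and its terminal endpoint lies in $B$. Since any two distinct points of $B$ are at distance $\geq \delta_B > \delta_B/4$, the only element of $B$ that can be reached counterclockwise from a point of $B_{\sigma_1(i)}$ within distance $\delta_B/4$ is $b_{\sigma_1(i)+1}$, forcing the entire image to sit inside $B_{\sigma_1(i)}$. The symmetric argument applied at $b_{i+1}$ (using Proposition \ref{discontinuidades que no estan en B van a B} item 1 for the endpoint $c$) shows the right type 1 piece maps into $B_{\tau_1(i)}$. Taking the union of the three images yields the first inclusion $\rho(t_q)(B_i) \subset B_{\tau_1(i)} \cup B_i \cup B_{\sigma_1(i)}$ of item 1.

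The second inclusion is immediate from the definition of $\sigma$ and $\tau$: when $\sigma_1(i) \neq i$ we have $\sigma(i) = \sigma_1(i)$, and when $\sigma_1(i) = i$ we have $B_{\sigma_1(i)} = B_i$; the analogous dichotomy holds for $\tau_1$, so in every case $B_{\sigma_1(i)} \subset B_{\sigma(i)} \cup B_i$ and $B_{\tau_1(i)} \subset B_{\tau(i)} \cup B_i$. Item 2 is obtained by replaying the entire argument with $-t_q$ in place of $t_q$ and $(\sigma_2,\tau_2)$ in place of $(\sigma_1,\tau_1)$; this is legitimate because $T$ is symmetric so $-t_q \in T$, and the structural results invoked hold uniformly for $\pm t$ near $0$. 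The main delicate step I anticipate is the $\S^1$-bookkeeping for the type 1 image: one must simultaneously invoke the measure upper bound $\delta_B/4$, the known starting block $B_{\sigma_1(i)}$, and the forced terminal point in $B$ to pin down both the exact terminal element and the containing block; once that is done the remark is essentially immediate.
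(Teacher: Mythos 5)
Your proposal is correct and takes essentially the same approach the paper intends: the paper presents this remark as an immediate consequence of Remark \ref{type 2 no change} (combined with Proposition \ref{discontinuidades que no estan en B van a B} and the definitions of $\sigma_{1},\tau_{1},\sigma_{2},\tau_{2}$), and your argument is just the explicit bookkeeping of that derivation, decomposing each $B_{i}$ into at most one type 1 piece at each end plus a type 2 middle piece and tracking their images. No gaps to report.
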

%
%
%
%


Now we study the dynamics of \(\tau\) and \(\sigma\). 

\begin{prop} \label{Inicio de Ci}
Let $i$ be such that $\sigma(i) = i$ and $k_{0}$ the first index such that $\tau^{k_{0}}(i)\subset \{\tau^{k}(i) : k < k_{0}\}$. Then $\tau^{k_{0}}(i) = \tau^{k_{0}-1}(i)$.
\end{prop}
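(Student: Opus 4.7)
The plan is a short proof by contradiction that leverages the pseudo-inverse relation between $\sigma$ and $\tau$ stated in the earlier remark (if $\tau(j)\neq j$ then $\sigma(\tau(j))=j$), together with the minimality of $k_0$, which guarantees that the iterates $\tau^0(i),\tau^1(i),\ldots,\tau^{k_0-1}(i)$ are pairwise distinct.

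First I would dispose of the trivial case $k_0=1$: by definition $\tau^{k_0}(i)\in\{\tau^k(i):k<k_0\}=\{i\}$, so $\tau(i)=i$ and thus $\tau^{k_0}(i)=\tau^{k_0-1}(i)$. From now on assume $k_0\ge 2$, and suppose for contradiction that $\tau^{k_0}(i)\neq\tau^{k_0-1}(i)$. Then there exists some $k$ with $0\le k\le k_0-2$ such that $\tau^{k_0}(i)=\tau^{k}(i)$.

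Next I would apply $\sigma$ to this equality. On one hand, since $\tau^{k_0}(i)\neq\tau^{k_0-1}(i)$, the value $\tau^{k_0-1}(i)$ is not a fixed point of $\tau$, so by item (1) of the pseudo-inverse remark,
\[
\sigma(\tau^{k_0}(i)) \;=\; \sigma(\tau(\tau^{k_0-1}(i))) \;=\; \tau^{k_0-1}(i).
\]
On the other hand, I would compute $\sigma(\tau^{k}(i))$ in two subcases. If $k\ge 1$, then by the minimality of $k_0$ all iterates $\tau^0(i),\dots,\tau^{k_0-1}(i)$ are distinct, so in particular $\tau^{k}(i)\neq\tau^{k-1}(i)$, and again the pseudo-inverse remark gives $\sigma(\tau^{k}(i))=\tau^{k-1}(i)$. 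Equating the two expressions yields $\tau^{k_0-1}(i)=\tau^{k-1}(i)$ with $k-1<k_0-1$, which contradicts the distinctness of the iterates. If $k=0$, then $\sigma(\tau^{k}(i))=\sigma(i)=i$ by the hypothesis $\sigma(i)=i$, so $\tau^{k_0-1}(i)=i=\tau^0(i)$; since $k_0-1\ge 1$, this again contradicts distinctness.

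The only step that requires a bit of care is the correct use of the pseudo-inverse remark: it only applies when $\tau$ is not fixing the point in question, so both non-fixity conditions ($\tau(\tau^{k_0-1}(i))\neq\tau^{k_0-1}(i)$ and, when $k\ge 1$, $\tau(\tau^{k-1}(i))\neq\tau^{k-1}(i)$) must be verified from the assumption and the minimality of $k_0$. Once this bookkeeping is in place, the contradiction is immediate, and there is no genuine obstacle beyond it.
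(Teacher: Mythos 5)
Your proof is correct and follows essentially the same route as the paper: assume $\tau^{k_0}(i)=\tau^{k}(i)$ for some $k<k_0-1$, apply the pseudo-inverse remark (splitting into the cases $k=0$, where $\sigma(i)=i$ is used, and $k\geq 1$), and contradict the minimality of $k_0$. Your extra care in checking the non-fixity hypotheses before invoking the remark is a minor refinement of the same argument, not a different approach.
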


\begin{proof}
The existence of $k_{0}$ is guaranteed by the finitude of $B$. If $k_{0} = 1$ the result is immediate.

Suppose $k_{0} > 1$ and the thesis is not true.

By definition of $k_{0}$ we have $\tau^{h}(i) \neq \tau^{h+1}(i)$ for all $h < k_{0}-1$. Let $h < k_{0} - 1$ such that $\tau^{k_{0}}(i) = \tau^{h}(i)$.
 
If $h = 0$ (i.e. $\tau^{k_{0}}(i) = i$), applying Remark \ref{pseudoinversa de salto} we have the following equalities $i = \sigma(i) = \sigma(\tau^{k_{0}}(i)) = \tau^{k_{0} - 1}(i)$, this is a contradiction.
 
In the case $h>0$ by Remark \ref{pseudoinversa de salto} $ \tau^{k_{0}-1}(i) = \sigma(\tau^{k_{0}}(i)) = \sigma( \tau^{h}(i)) = \tau^{h-1}(i)$ which is a contradiction.
\end{proof}

Interchanging $\tau$ and $\sigma$, we have the symmetric proposition.
 
\begin{prop} \label{fin de Ci}
Let $i_{0}$ such that $\tau(i) = i$.
Let $k_{0}$ be the first index such that $\sigma^{k_{0}}(i)\subset \{\sigma^{k}(i) : k < k_{0}\}$. Then $\sigma^{k_{0}}(i) = \sigma^{k_{0}-1}(i)$.
\end{prop}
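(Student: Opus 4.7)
The plan is to mirror the proof of Proposition \ref{Inicio de Ci} verbatim with the roles of $\tau$ and $\sigma$ interchanged, using the symmetry provided by Remark \ref{pseudoinversa de salto}: item~2 there says that $\tau(\sigma(j)) = j$ whenever $\sigma(j) \neq j$, which is exactly the relation that powered the previous proposition (in which item~1, $\sigma(\tau(j)) = j$, was used). Since the hypothesis is now $\tau(i) = i$ instead of $\sigma(i) = i$, the symmetric relation applies in the symmetric proof.

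First I would note that $k_{0}$ exists because $B$ is finite (Corollary \ref{B finite}), so the orbit $\{\sigma^{k}(i)\}_{k \geq 0}$ must eventually repeat. If $k_{0} = 1$, then $\sigma(i) \in \{i\} = \{\sigma^{0}(i)\}$, which forces $\sigma(i) = i = \sigma^{0}(i)$, giving the conclusion. So assume $k_{0} > 1$ and, for contradiction, that $\sigma^{k_{0}}(i) \neq \sigma^{k_{0}-1}(i)$.

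By minimality of $k_{0}$, the elements $\sigma^{0}(i), \sigma^{1}(i), \ldots, \sigma^{k_{0}-1}(i)$ are pairwise distinct; in particular $\sigma^{h}(i) \neq \sigma^{h+1}(i)$ for all $h < k_{0} - 1$. Pick the unique $h$ with $0 \leq h < k_{0} - 1$ such that $\sigma^{k_{0}}(i) = \sigma^{h}(i)$. I would split into two cases. If $h = 0$, then $\sigma^{k_{0}}(i) = i$, and since $\sigma(\sigma^{k_{0}-1}(i)) = \sigma^{k_{0}}(i) \neq \sigma^{k_{0}-1}(i)$ (by the minimality observation applied with index $k_{0}-1$), Remark \ref{pseudoinversa de salto}(2) gives $\tau(\sigma^{k_{0}}(i)) = \sigma^{k_{0}-1}(i)$; but the left side equals $\tau(i) = i$, so $\sigma^{k_{0}-1}(i) = i = \sigma^{0}(i)$, contradicting the distinctness of the first $k_{0}$ iterates.

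If instead $h > 0$, the same application of Remark \ref{pseudoinversa de salto}(2) yields $\sigma^{k_{0}-1}(i) = \tau(\sigma^{k_{0}}(i)) = \tau(\sigma^{h}(i)) = \sigma^{h-1}(i)$, again contradicting the distinctness of $\sigma^{0}(i), \ldots, \sigma^{k_{0}-1}(i)$ because $h - 1 < k_{0} - 1$. In each case we reach a contradiction, so $\sigma^{k_{0}}(i) = \sigma^{k_{0}-1}(i)$. There is no real obstacle here: the entire content of the proof is the formal duality between $\tau$ and $\sigma$ encoded in Remark \ref{pseudoinversa de salto}, and the only thing to check is that the case $h = 0$ uses the hypothesis $\tau(i) = i$ in place of the hypothesis $\sigma(i) = i$ used in Proposition \ref{Inicio de Ci}.
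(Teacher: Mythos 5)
Your proof is correct and is essentially the paper's own argument: the paper simply states that the proposition follows by interchanging $\tau$ and $\sigma$ in the proof of Proposition \ref{Inicio de Ci}, and your write-up is exactly that symmetric argument, using item 2 of Remark \ref{pseudoinversa de salto} in place of item 1 and the hypothesis $\tau(i)=i$ in place of $\sigma(i)=i$.
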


The previous Proposition allows us to separate $\{1,...,N\}$ in two categories of invariant set for $\tau$ and $\sigma$.

\begin{nota}
We denote

\begin{enumerate}
\item $O_{\tau}(i) = \{ \tau^{k}(i) : k \geq 0\}$ and $O_{\sigma}(i) = \{ \sigma^{k}(i) : k \geq 0\}$,
\item $O(i) = O_{\tau}(i) \cup O_{\sigma}(i)$ and $k_{i}= \#O(i)$.
\item $L=\{i: \sigma(i) = i \}$. Note that $O(l) = O_{\tau}(l)$ for \(l \in L\).
\item $S = \{1,...,N\} \setminus (\cup_{l \in L} O(l) )$
\item $O = \{O(i) : i \in L \cup S \}$ and $M = \# O$.
\end{enumerate}
\end{nota}

Let \(i \in \{1,...,N\}\) and \(k \in \Z^{+}\) such that \(\sigma^{k-1}(i) \neq \sigma^{k}(i) = \sigma^{k+1}(i)\) (i.e. \(\sigma^{k}(i) \in L\)). Applying Remark \ref{pseudoinversa de salto} we have that \(i = \tau^{k}(\sigma^{k}(i)) \in O_{\tau}(\sigma^{k}(i))\) in particular \(i \notin S\).

In the other hand if $i \in \{1,...,N\}$ verifies that $\tau^{k-1}(i) \neq \tau^{k}(i) = \tau^{k+1}(i)$ then applying Proposition \ref{fin de Ci} to \(\tau^{k}(i)\) there exists $h \geq k$ such that $ \sigma^{h-1}(\tau^{k}(i)) \neq \sigma^{h}(\tau^{k}(i)) = \sigma(\sigma^{h}(\tau^{k}(i))) \in L$.
 Applying Remark \ref{pseudoinversa de salto} we have that \(\sigma^{h}(\tau^{k}(i)) = \sigma^{h-k}(i)\) and \(i = \tau^{h-k}(\sigma^{h-k}(i))\) then $i \in O_{\tau}(\sigma^{h-k}(i))$ and $i \notin S$. 

In summary we have the following Remark.

\begin{obs} \label{pre cilcos en sigma y tau}
Let $i \in \{1,...,N\}$, $i \in S$ if and only if $\tau^{k+1}(i) \neq \tau^{k}(i)$ and $\sigma^{k+1}(i) \neq \sigma^{k}(i)$ for all $k$.
\end{obs}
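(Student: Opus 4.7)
The equivalence splits into two implications. The forward direction---``$i\in S\Rightarrow \tau^{k+1}(i)\neq \tau^{k}(i)$ and $\sigma^{k+1}(i)\neq \sigma^{k}(i)$ for all $k$''---is just the contrapositive of what has already been established in the two paragraphs preceding the statement: a $\sigma$-stabilization $\sigma^{k-1}(i)\neq\sigma^{k}(i)=\sigma^{k+1}(i)$ places $\sigma^{k}(i)$ in $L$ and, via iterated pseudo-inverse, puts $i$ in $O_{\tau}(\sigma^{k}(i))$; and a $\tau$-stabilization $\tau^{k-1}(i)\neq\tau^{k}(i)=\tau^{k+1}(i)$ reduces, via Proposition \ref{fin de Ci} applied to the $\tau$-fixed point $\tau^{k}(i)$, to the previous case. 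Either way $i\notin S$.

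For the converse I argue the contrapositive: if $i\notin S$, then at least one of the two orbits stabilizes. By definition $i\in O(l)$ for some $l\in L$, and since $\sigma(l)=l$ forces $O_{\sigma}(l)=\{l\}$, the excerpt has already recorded $O(l)=O_{\tau}(l)$, so $i=\tau^{k}(l)$ for some $k\geq 0$; choose $k$ minimal. If $k=0$, then $i=l$ gives $\sigma(i)=i$ and the $\sigma$-orbit of $i$ is constant from the start. If $k\geq 1$, Proposition \ref{Inicio de Ci} applied to $l$ yields an integer $k_{0}\geq 1$ such that $l,\tau(l),\ldots,\tau^{k_{0}-1}(l)$ are pairwise distinct and $\tau^{k_{0}}(l)=\tau^{k_{0}-1}(l)$; the minimality of $k$ then forces $k\leq k_{0}-1$, so $\tau^{j}(l)\neq \tau^{j-1}(l)$ for every $1\leq j\leq k$.

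Iterated application of Remark \ref{pseudoinversa de salto}(1)---which says $\sigma(\tau(x))=x$ whenever $\tau(x)\neq x$---then produces
\[
\sigma^{j}(i)=\sigma^{j}(\tau^{k}(l))=\tau^{k-j}(l)\quad\text{for every }0\leq j\leq k.
\]
In particular $\sigma^{k}(i)=l\in L$, hence $\sigma^{k+1}(i)=\sigma(l)=l=\sigma^{k}(i)$, which is the stabilization that was required.

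I do not expect any real obstacle: the whole argument is bookkeeping with the pseudo-inverse relations of Remark \ref{pseudoinversa de salto} together with the ``orbit-stabilizes-at-first-repetition'' dichotomy encoded in Propositions \ref{Inicio de Ci} and \ref{fin de Ci}. The only subtlety is choosing the exponent $k$ minimal so that the identity $\sigma\circ\tau=\mathrm{id}$ can be iterated the requisite number of times without encountering a step at which $\tau$ already fixes the iterate, and this is exactly what Proposition \ref{Inicio de Ci} guarantees.
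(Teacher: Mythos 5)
Your proposal is correct and matches the paper's treatment: the forward implication is exactly the argument of the two paragraphs preceding the remark ($\sigma$-stabilization handled via Remark \ref{pseudoinversa de salto}, $\tau$-stabilization reduced to it via Proposition \ref{fin de Ci}), which is all the paper itself makes explicit. Your converse, pulling $i=\tau^{k}(l)$ with $l\in L$ back along $\sigma$ using $\sigma(\tau(x))=x$ off fixed points to reach $\sigma^{k}(i)=l$ and hence a $\sigma$-stabilization, is sound and fills in a direction the paper leaves implicit; note it can be obtained even more directly, since Proposition \ref{Inicio de Ci} puts the $\tau$-fixed point $\tau^{k_{0}-1}(l)$ on the forward $\tau$-orbit of $i$, so the $\tau$-orbit of $i$ already stabilizes.
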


\begin{prop}
Let $s \in S$. The functions $\sigma \vert_{O(s)}$ and $\tau \vert_{O(s)}$ are bijection with 1 orbit, in particular $O(s) = O_{\sigma}(s) = O_{\tau}(s)$.
\end{prop}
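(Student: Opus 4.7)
The plan is to exploit the near-inverse relation between $\tau$ and $\sigma$ (Remark \ref{pseudoinversa de salto}) together with the characterization of $S$ given in Remark \ref{pre cilcos en sigma y tau}. Fix $s \in S$; by that remark, $\tau^{k+1}(s) \neq \tau^{k}(s)$ and $\sigma^{k+1}(s) \neq \sigma^{k}(s)$ for every $k \geq 0$. Applying Remark \ref{pseudoinversa de salto}(1) at the point $\tau^{k-1}(s)$, whose $\tau$-image differs from itself, yields the identity $\sigma(\tau^{k}(s)) = \tau^{k-1}(s)$ for every $k \geq 1$. Symmetrically, $\tau(\sigma^{k}(s)) = \sigma^{k-1}(s)$ for every $k \geq 1$.

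Next I would locate the first recurrence of the $\tau$-orbit. Since $\{1,\dots,N\}$ is finite there is a minimal $k_{0} \geq 1$ with $\tau^{k_{0}}(s) \in \{\tau^{k}(s) : k < k_{0}\}$, say $\tau^{k_{0}}(s) = \tau^{h}(s)$ with $0 \leq h < k_{0}$. The condition $s \in S$ rules out $h = k_{0}-1$. If $h \geq 1$, applying $\sigma$ to both sides and using $\sigma \circ \tau = \mathrm{id}$ on the orbit of $s$ (valid because no consecutive iterates coincide) gives $\tau^{k_{0}-1}(s) = \tau^{h-1}(s)$, contradicting the minimality of $k_{0}$. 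Therefore $h = 0$, i.e.\ $\tau^{k_{0}}(s) = s$, and the elements $s,\tau(s),\dots,\tau^{k_{0}-1}(s)$ are pairwise distinct. Hence $\tau\vert_{O_{\tau}(s)}$ is a cyclic permutation of the $k_{0}$-element set $O_{\tau}(s)$; in particular it is a bijection with exactly one orbit.

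It remains to recognize $\sigma$ on $O_{\tau}(s)$ as the inverse of this cycle and to identify $O_{\sigma}(s)$ with $O_{\tau}(s)$. Using $\sigma(\tau^{j}(s)) = \tau^{j-1}(s)$ for $1 \leq j \leq k_{0}$, together with $\tau^{k_{0}}(s) = s$, we obtain $\sigma(s) = \tau^{k_{0}-1}(s)$, $\sigma(\tau^{k_{0}-1}(s)) = \tau^{k_{0}-2}(s)$, and so on, so $\sigma$ permutes $O_{\tau}(s)$ cyclically in the opposite direction. Iterating gives $O_{\sigma}(s) = \{s,\tau^{k_{0}-1}(s),\dots,\tau(s)\} = O_{\tau}(s)$; consequently $O(s) = O_{\sigma}(s) \cup O_{\tau}(s) = O_{\tau}(s)$, and both $\sigma\vert_{O(s)}$ and $\tau\vert_{O(s)}$ are bijections with a single orbit.

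The main obstacle, albeit a mild one, is the step that forces the $\tau$-orbit to close up at $s$ rather than at an intermediate point: one must carefully invoke the partial inverse relation $\sigma\circ\tau = \mathrm{id}$ along the entire orbit, which requires the full strength of the $S$-characterization ensuring $\tau^{k+1}(s)\neq\tau^{k}(s)$ for \emph{every} $k$, not just the first one. Once this is in place, the symmetry between $\tau$ and $\sigma$ makes the remaining identifications essentially formal.
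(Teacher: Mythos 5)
Your proposal is correct and follows essentially the same argument as the paper: identify the first recurrence index of one of the maps, use the characterization of $S$ (no consecutive iterates coincide) to justify applying the pseudo-inverse relation of Remark \ref{pseudoinversa de salto}, pull back to contradict minimality unless the orbit closes at $s$, and then conclude the other map is the inverse cycle so the orbits coincide. The only difference is cosmetic: the paper runs the argument on $\sigma$ first and deduces the statement for $\tau$, whereas you run it on $\tau$ first.
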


\begin{proof}
Let prove the case of $\sigma$.

Let $k$ the first index such that $\sigma^{k}(s) \in \{\sigma^{h}(s) : h < k\}$. Note that $k > 1$. Applying Remark \ref{pre cilcos en sigma y tau} we have $\sigma^{k}(s) \neq \sigma^{k-1}(s)$.

Let $h < k$ such that $\sigma^{h}(s) = \sigma^{k}(s)$. By Remark \ref{pseudoinversa de salto} we have $s = \sigma^{k-h}(s)$, then $h = 0$.

We conclude that the function $\sigma$  is conjugated to $x \to x + 1$  in $\Z_{k_{s}}$. In particular is a bijection with one orbit. Since $k_{s} > 1$ we conclude the same properties to $\tau$, even more $\tau \vert_{O(s)} = (\sigma \vert_{O(s)})^{-1}$. Finally  $O(s) = O_{\sigma}(s) = O_{\tau}(s)$.
\end{proof}

\begin{prop}
Let \(i_{1}, i_{2} \in L\), then \(i_{1} = i_{2}\) or \(O(i_{1}) \cap O(i_{2}) = \emptyset\).
\end{prop}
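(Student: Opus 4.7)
The plan is to argue by contrapositive: supposing $O(i_{1}) \cap O(i_{2}) \neq \emptyset$, I deduce $i_{1}=i_{2}$. Since $i_{1},i_{2}\in L$, the notation gives $O(i_{m})=O_{\tau}(i_{m})$, so any common element can be written as $\tau^{k_{1}}(i_{1})=\tau^{k_{2}}(i_{2})$ for suitable $k_{1},k_{2}\geq 0$; without loss of generality $k_{1}\leq k_{2}$.

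First I would invoke Proposition \ref{Inicio de Ci} applied to each $i_{m}\in L$. This produces an index $K_{m}$ such that the iterates $\tau^{0}(i_{m}),\tau^{1}(i_{m}),\ldots,\tau^{K_{m}-1}(i_{m})$ are pairwise distinct and $\tau^{K_{m}-1}(i_{m})$ is a $\tau$-fixed point. Choosing $k_{1}$ and $k_{2}$ to be the least exponents realizing the coincidence, we may assume $0\leq k_{m}\leq K_{m}-1$.

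The engine of the proof is that $\sigma$ acts as a left-inverse of $\tau$ on the strict part of the orbit. Indeed, for $1\leq k\leq K_{m}-1$ the distinctness gives $\tau(\tau^{k-1}(i_{m}))=\tau^{k}(i_{m})\neq \tau^{k-1}(i_{m})$, so Remark \ref{pseudoinversa de salto} yields $\sigma(\tau^{k}(i_{m}))=\tau^{k-1}(i_{m})$. Applying $\sigma^{k_{1}}$ to both sides of $\tau^{k_{1}}(i_{1})=\tau^{k_{2}}(i_{2})$ then produces
\[ i_{1}=\tau^{k_{2}-k_{1}}(i_{2}). \]

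If $k_{1}=k_{2}$ we are done; otherwise $1\leq k_{2}-k_{1}\leq K_{2}-1$ and I apply $\sigma$ once more to the displayed equation. The left-hand side equals $i_{1}$ because $i_{1}\in L$ means $\sigma(i_{1})=i_{1}$, while the right-hand side equals $\tau^{k_{2}-k_{1}-1}(i_{2})$ by the same left-inverse argument. Combined with the displayed equation this forces $\tau^{k_{2}-k_{1}-1}(i_{2})=\tau^{k_{2}-k_{1}}(i_{2})$, contradicting the distinctness of iterates inside the strict part of $O_{\tau}(i_{2})$. The main obstacle is purely bookkeeping: one must verify at each $\sigma$-step that the exponent lies in the range $[1,K_{m}-1]$ where Remark \ref{pseudoinversa de salto} applies, which is guaranteed by the minimality choice of $k_{m}\leq K_{m}-1$.
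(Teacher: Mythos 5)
Your proof is correct and follows essentially the same route as the paper's: both arguments hinge on Remark \ref{pseudoinversa de salto} to peel off $\tau$-iterates by applying $\sigma$, and both obtain the final contradiction from $\sigma(i_{1})=i_{1}$ forcing two consecutive $\tau$-iterates of $i_{2}$ to coincide. The only cosmetic difference is that you organize the bookkeeping via Proposition \ref{Inicio de Ci} and minimal exponents, where the paper instead normalizes by assuming $\tau^{h_{2}}(i_{2})\neq\tau^{h_{2}-1}(i_{2})$ and runs a descending induction.
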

\begin{proof}
Supose that \(O(i_{1}) \cap O(i_{2}) \neq \emptyset\), i.e.: there exists \(h_{1}, h_{2}\) such that \(\tau^{h_{1}}(i_{1}) = \tau^{h_{2}}(i_{2})\).
Note that if \(h_{1} = h_{2} = 0\) then \(i_{1} = i_{2}\).
We can assume that \(h_{2} \neq 0\) and \(\tau^{h_{2}}(i_{2}) \neq \tau^{h_{2}-1}(i_{2})\).

If \(h_{1} = 0\), as \(i_{1} \in L\) we have that \(\sigma(\tau^{h_{2}}(i_{2})) = \sigma(i_{1})=i_{1} = \tau^{h_{2}}(i_{2})\). Since \(\tau^{h_{2}}(i_{2}) \neq \tau^{h_{2}-1}(i_{2})\) applying Remark \ref{pseudoinversa de salto} we get that \(\sigma(\tau^{h_{2}}(i_{2})) = \tau^{h_{2} - 1}(i_{2})\) which is a contradiction.

If \(h_{1} > 0\) we can assume that \(\tau^{h_{1}}(i_{1}) \neq \tau^{h_{1}-1}(i_{1})\) then 
applying Remark \ref{pseudoinversa de salto} we have that \(\tau^{h_{1}-1}(i_{1}) = \sigma(\tau^{h_{1}}(i_{1})) = \sigma(\tau^{h_{2}}(i_{2})) = \tau^{h_{2}-1}(i_{2})\). Finally, by induction we conclude  \(i_{1} = i_{2}\) and we get a contradiction.
\end{proof}

\begin{coro}
Let $i, j \in \{1,...,N\}$. Then $O(i)=O(j)$ or $O(i) \cap O(j) = \emptyset$.

Also, applying Remark \ref{pre inavriant 1} we have that for any $i \in \{1,...,N\}$ the set $\cup_{k \in O(i)} B_{k}$ is invariant for $\rho(t_{q})$ and $\rho(-t_{q})$ for all $q$.
\end{coro}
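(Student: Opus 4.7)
The plan is to analyse $O(i)$ in three cases according to whether $i \in L$, $i \in O(l) \setminus L$ for some $l \in L$, or $i \in S$, and to derive both the disjointness and the invariance from this structural description.

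First I would treat $i \in L$: here $O(i) = O_\tau(i)$ is a $\tau$-chain that stabilises at a $\tau$-fixed point, by Proposition \ref{Inicio de Ci}, with $\tau^h(i) \neq \tau^{h-1}(i)$ strictly along the chain. For $i = \tau^k(l) \in O(l) \setminus L$ with $k \geq 1$ I would use Remark \ref{pseudoinversa de salto} iteratively to walk $\sigma$ back down the chain: $\sigma^j(i) = \tau^{k-j}(l)$ for $j \leq k$ and $\sigma^{k+1}(i) = \sigma(l) = l$, whence $O_\sigma(i) \cup O_\tau(i) = O(l)$ and hence $O(i) = O(l)$. For $i \in S$ the previous proposition already gives that $O(i)$ is a single cycle on which $\tau$ and $\sigma$ act as mutually inverse bijections, so $O(j) = O(i)$ for every $j \in O(i)$.

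Next I would observe that $O(j) \subseteq S$ whenever $j \in S$: otherwise some $k \in O(j) \cap L$ would satisfy $\sigma(k) = k$ inside a cyclic orbit of $\sigma|_{O(j)}$, forcing the cycle to collapse to $\{k\}$ and hence $j = k \in L$, contradicting $j \in S$. With this observation the disjointness reduces to a short case split. If both $i,j$ lie in $\bigcup_{l \in L} O(l)$, the structural description gives $O(i) = O(l_i)$ and $O(j) = O(l_j)$ for unique $l_i, l_j \in L$, and the previous proposition (disjointness of orbits of distinct elements of $L$) forces $l_i = l_j$ as soon as the orbits meet. If one of $i,j$ lies in $S$ and the other in $\bigcup_l O(l)$, the two orbits sit in complementary subsets of $\{1,\ldots,N\}$. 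If both lie in $S$, any common point $k$ satisfies $O(k) = O(i)$ and $O(k) = O(j)$ by the cyclic structure, so $O(i) = O(j)$.

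For the invariance the orbit analysis shows at once that $O(i)$ is closed under both $\tau$ and $\sigma$, so Remark \ref{pre inavriant 1} yields $\rho(\pm t_q)(B_k) \subseteq B_{\tau(k)} \cup B_k \cup B_{\sigma(k)} \subseteq \bigcup_{m \in O(i)} B_m$ for every $k \in O(i)$, and the union is $\rho(\pm t_q)$-invariant. The main obstacle is the bookkeeping in the second case: one must verify that the iterated application of Remark \ref{pseudoinversa de salto} really does walk $\sigma$ back from $\tau^k(l)$ to $l$ and then remains at $l$. This relies on $\tau^h(l) \neq \tau^{h-1}(l)$ for $h < n_l$, which is exactly the content of the minimality statement in Proposition \ref{Inicio de Ci}. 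Once the Case B identity is in place, everything else follows from the already-proven pieces.
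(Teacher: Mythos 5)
Your proof is correct and follows the same route the paper intends: the corollary is stated there without a written proof, as an immediate consequence of the preceding orbit-structure results (Proposition \ref{Inicio de Ci}, Remark \ref{pseudoinversa de salto}, the proposition on $\sigma\vert_{O(s)}$, $\tau\vert_{O(s)}$ for $s\in S$, the disjointness of $O(i_1),O(i_2)$ for $i_1,i_2\in L$, and Remark \ref{pre inavriant 1}), and your three-case analysis simply writes that derivation out. One small point worth making explicit: your claim that $O(j)\subseteq S$ for $j\in S$ needs the chaining step you already use in the both-in-$S$ case (if $m\in O(j)\cap O(l)$ with $l\in L$, then $O(m)=O(l)$ by your Case~2 identity and $O(m)=O(j)$ by cyclicity, so $l\in O(j)\cap L$), since the cycle-collapse argument alone only rules out $O(j)\cap L\neq\emptyset$.
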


The sets $O(i)$ give a partition of the set $\{1,...,N\}$, and $\S^{1}$ via the interval $B_{i}$. For the sake of  clarity we choose a representative element of each $O(i)$ with some properties.

\begin{nota}
We will denote $(m_{0}, ... ,m_{M-1})$ a $M$ tuple such that $O = \{O_{\tau}(m_{i}) : i < M  \}$. Note that $O_{\tau}(m_{i}) \neq O_{\tau}(m_{j})$ if $i \neq j$.
\end{nota}

\begin{obs}
For any $x \in \S^{1}$ there exists an unique pair $(j,h)$ with $j < M$ and $h < k_{m_{j}}$ such that $x \in B_{\tau^{h}(m_{j})}$.
\end{obs}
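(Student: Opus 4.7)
The plan is to reduce the observation to a single bijection statement on the index set $\{1,\ldots,N\}$. Since $\{B_{1},\ldots,B_{N}\}$ is already a partition of $\S^{1}$, each $x\in\S^{1}$ lies in a unique $B_{k}$, so the observation is equivalent to showing that the map $\Phi(j,h)=\tau^{h}(m_{j})$ is a bijection from $\{(j,h):0\le j<M,\ 0\le h<k_{m_{j}}\}$ onto $\{1,\ldots,N\}$.

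First I would show that $\{O(m_{0}),\ldots,O(m_{M-1})\}$ is a partition of $\{1,\ldots,N\}$. Pairwise disjointness is immediate from the preceding corollary (either $O(i)=O(j)$ or $O(i)\cap O(j)=\emptyset$). For the covering, any $i\in\{1,\ldots,N\}$ either belongs to $S$ or to $O(l)$ for some $l\in L$, so $O(i)$ coincides with $O(i^{\prime})$ for some $i^{\prime}\in L\cup S$; by the definition of $O$ and the choice of the tuple $(m_{0},\ldots,m_{M-1})$, this $O(i^{\prime})$ equals $O(m_{j})$ for a unique $j<M$. Consequently, each $k\in\{1,\ldots,N\}$ belongs to exactly one $O(m_{j})$, which fixes the index $j$ in the pair.

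Second, for each fixed $j<M$, I would verify that $h\mapsto\tau^{h}(m_{j})$ is a bijection of $\{0,\ldots,k_{m_{j}}-1\}$ onto $O(m_{j})$, splitting into two cases. If $m_{j}\in S$ then by the preceding proposition $\tau|_{O(m_{j})}$ is a bijection with a single orbit, and since $\#O(m_{j})=k_{m_{j}}$ the iterates $\tau^{0}(m_{j}),\ldots,\tau^{k_{m_{j}}-1}(m_{j})$ are pairwise distinct and exhaust $O(m_{j})$. If $m_{j}\in L$ then $\sigma(m_{j})=m_{j}$, whence $O(m_{j})=O_{\tau}(m_{j})$; Proposition \ref{Inicio de Ci} produces a first index $k_{0}$ with $\tau^{k_{0}}(m_{j})$ revisiting an earlier iterate, and in fact $\tau^{k_{0}}(m_{j})=\tau^{k_{0}-1}(m_{j})$, so the iterates stabilise there and $O_{\tau}(m_{j})=\{\tau^{h}(m_{j}):0\le h<k_{0}\}$ has exactly $k_{0}=k_{m_{j}}$ distinct elements, again yielding a bijection.

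The main difficulty is just the bookkeeping that identifies $k_{m_{j}}=\#O(m_{j})$ with the correct enumeration length in each case: for $m_{j}\in L$ one must match $k_{m_{j}}$ with the $k_{0}$ from Proposition \ref{Inicio de Ci}, and for $m_{j}\in S$ with the cycle length of $\tau|_{O(m_{j})}$. Once these identifications are in place, the two steps combine to show that $\Phi$ is a bijection, which gives the existence and uniqueness of the pair $(j,h)$.
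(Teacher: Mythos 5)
Your argument is correct and is essentially the argument the paper leaves implicit: the remark is stated without proof as a direct consequence of the disjointness corollary, Proposition \ref{Inicio de Ci}, the single-orbit proposition for $s\in S$, and the choice of the representatives $m_{j}$, which are exactly the ingredients you assemble. The reduction to the bijectivity of $(j,h)\mapsto\tau^{h}(m_{j})$, with the case split $m_{j}\in L$ versus $m_{j}\in S$ and the identification $k_{m_{j}}=\#O(m_{j})$ in each case, matches the intended reasoning.
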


\subsection{Construction of the map \(f\).}

Finally we will build $\S^{1}_{\lambda_j}$ the component of $\mathcal{D}$ and the function $f \in IET(\S^{1},
\mathcal{D})$ of the main theorem.

Recall the definition of maps \(\mathcal{I}_{j}:(\S^{1})_{\lambda_{j}} \to D\) (Definition \ref{inclusion in domain}) canonical inclusion of j-th component of a domain \(D\), and \(\pi_{\lambda_{j}}:\R \to (\S^{1})_{\lambda_{j}}\) (Notation \ref{distance S1}) the canonical projection.

\begin{defi}
For \(j \in \{0,..,M-1\}\) we define \(\lambda_{j} = \sum_{i < k_{j}}\mu(B_{\tau^{i}(m_{j})})\), and \(\mathcal{D} = \uplus_{j < M} (\S^{1})_{\lambda_{j}}\)

We define the function \(f \in IET(\S^{1},\mathcal{D})\) by

$$f(x) = \mathcal{I}_{j}\left(\pi_{\lambda_{j}}\left(diam([b_{\tau^{h}(m_{j})},x)) +  \sum_{l < h}\mu(B_{\tau^{l}(m_{j})})\right)\right)$$

where $h < k_{m_{j}}$ and $x \in B_{\tau^{h}(m_{j})}$.

\end{defi}

%
%
%

\begin{obs} \label{discontinuidades de f-1 e intervalos invariantes}
The function $f \in IET(\S^{1},\mathcal{D})$ and $BP_{0}(f) \subset B$. 

For any $j \in \{0,...,M-1\}$ we have $$f\left(\bigcup_{h < k_{m_{j}}} B_{\tau^{h}}(m_{j})\right) = f\left(\bigcup_{i \in O(m_{j})} B_{i}\right) = \mathcal{I}_{j}((\S^{1})_{\lambda{j}}).$$

Then the set $\mathcal{I}_{j}((\S^{1})_{\lambda{j}})$ is invariant by the functions $f \circ \rho(t_{q}) \circ f^{-1}$ and $f \circ \rho(-t_{q}) \circ f^{-1}$ for all $q$.
\end{obs}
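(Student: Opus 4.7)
The plan is to establish four sub-claims of the remark, essentially all by bookkeeping on the combinatorics of the orbits of $\tau$ and $\sigma$ built in the preceding subsection: (i) $f \in \IET(\S^1, \mathcal{D})$, (ii) $BP_0(f) \subset B$, (iii) the image identity, and (iv) the invariance of $\mathcal{I}_j((\S^1)_{\lambda_j})$ under the conjugates $f \circ \rho(\pm t_q) \circ f^{-1}$.

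For (i) and (ii), I would verify the defining properties of an interval exchange between domains piece by piece. First, $\{O(m_j)\}_{j < M}$ partitions $\{1,\dots,N\}$ by the corollary following the $S$-proposition, so $\{B_i\}$ subdivides into blocks $C_j = \bigcup_{i \in O(m_j)} B_i$ and $f$ is defined everywhere. On each $B_{\tau^h(m_j)}$ the formula is a translation composed with $\pi_{\lambda_j}$, hence an isometry into $(\S^1)_{\lambda_j}$, right continuous at the left endpoint $b_{\tau^h(m_j)}$, and the offsets $\sum_{l<h}\mu(B_{\tau^l(m_j)})$ ensure that the images tile $(\S^1)_{\lambda_j}$ without overlap and with total length $\lambda_j$. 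This yields bijectivity of $f\big|_{C_j}$ onto $\mathcal{I}_j((\S^1)_{\lambda_j})$, piecewise isometry, orientation preservation, and finitely many discontinuities, establishing (i). Since $f$ is an isometry inside each $B_i$, discontinuities can only lie at endpoints of these pieces, giving (ii).

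Claim (iii) reduces to two equalities. The first, $\{\tau^h(m_j) : h < k_{m_j}\} = O(m_j)$, I would handle by cases: in the $m_j \in S$ case the $S$-proposition gives $O(m_j) = O_\tau(m_j)$ directly, and in the $m_j \in L$ case $\sigma(m_j) = m_j$ forces $O(m_j) = O_\tau(m_j)$ by definition of $O$. The second is exactly the tiling computation already carried out for (i).

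For (iv), I would invoke Remark \ref{pre inavriant 1} to get $\rho(\pm t_q)(B_i) \subset B_{\tau(i)} \cup B_i \cup B_{\sigma(i)}$ and then reduce the problem to showing $O(m_j)$ is closed under both $\tau$ and $\sigma$. Closure under $\tau$ is by definition of $O_\tau$; closure under $\sigma$ follows from Remark \ref{pseudoinversa de salto}, which gives $\sigma(\tau(k)) = k$ whenever $\tau(k) \neq k$, with the stabilizing iterate (guaranteed by Proposition \ref{Inicio de Ci} in the $L$ case) and the $S$-bijectivity handling the boundary points. Conjugating by $f$ and applying (iii) then yields the invariance of $\mathcal{I}_j((\S^1)_{\lambda_j})$. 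The main obstacle I anticipate is precisely this last combinatorial step — ensuring $\sigma$-closure is watertight at the stabilizing iterate of the $L$-orbits — but once that bookkeeping is in place, every assertion of the remark follows cleanly from the construction.
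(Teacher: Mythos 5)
Your proposal is correct and follows essentially the same route the paper intends for this remark: unpacking the definition of $f$ (piecewise translations on the arcs $B_{\tau^h(m_j)}$, breakpoints only at points of $B$, images tiling $(\S^{1})_{\lambda_j}$) together with the combinatorics of $\tau$ and $\sigma$ already established. The only difference is cosmetic: the invariance of $\bigcup_{i \in O(m_j)} B_{i}$ under $\rho(\pm t_q)$ is exactly the corollary stated just before the remark, so your explicit re-derivation of $\tau$- and $\sigma$-closure of $O(m_j)$, while sound, could simply be replaced by a citation of that corollary followed by conjugation with $f$.
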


To prove that the function $f \circ \rho(t_{q}) \circ f^{-1}$ is continuous we will build a sequence of inclusion for the set \(BP_{0}(f \circ \rho(t_{q}) \circ f^{-1})\).

\begin{lema}
It holds $BP_{0}(f \circ \rho(t_{q}) \circ f^{-1}) \subset f(BP_{0}(\rho(t_{q})))$.
\end{lema}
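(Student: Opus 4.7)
The plan is to expand the left-hand side using the composition bound on discontinuity sets from Remark \ref{Discontinuidad de la inversa}(1) applied twice, and then to recognize that each extra piece produced along the way is already swept into $f(BP_{0}(\rho(t_{q})))$ by results already established in the paper. Concretely, applying the bound first to the outer composition $(f \circ \rho(t_{q})) \circ f^{-1}$ yields
\[
BP_{0}(f \circ \rho(t_{q}) \circ f^{-1}) \subset BP_{0}(f^{-1}) \cup f\bigl(BP_{0}(f \circ \rho(t_{q}))\bigr),
\]
and applying the bound once more to $f \circ \rho(t_{q})$ gives
\[
BP_{0}(f \circ \rho(t_{q})) \subset BP_{0}(\rho(t_{q})) \cup \rho(-t_{q})\bigl(BP_{0}(f)\bigr).
\]
Combining these two inclusions and pushing by $f$ reduces the lemma to showing that both $BP_{0}(f^{-1})$ and $f(\rho(-t_{q})(BP_{0}(f)))$ are contained in $f(BP_{0}(\rho(t_{q})))$.

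For the first piece I would use Remark \ref{Discontinuidad de la inversa}(3) to rewrite $BP_{0}(f^{-1}) = f(BP_{0}(f))$. By Remark \ref{discontinuidades de f-1 e intervalos invariantes} we have $BP_{0}(f) \subset B$, and since $t_{q} \in T$, Corollary \ref{B finite}(2) gives $B \subset BP_{0}(\rho(t_{q}))$; chaining these inclusions and applying $f$ yields $BP_{0}(f^{-1}) \subset f(BP_{0}(\rho(t_{q})))$. For the second piece I would again use $BP_{0}(f) \subset B$, this time with $-t_{q} \in T$ (recall that $T$ is symmetric), so that $BP_{0}(f) \subset BP_{0}(\rho(-t_{q}))$; applying Remark \ref{Discontinuidad de la inversa}(3) to $\rho(-t_{q})$ gives $\rho(-t_{q})(BP_{0}(\rho(-t_{q}))) = BP_{0}(\rho(t_{q}))$, hence $\rho(-t_{q})(BP_{0}(f)) \subset BP_{0}(\rho(t_{q}))$, and pushing by $f$ finishes this piece.

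No individual step is particularly delicate once the structural results of Sections 5.1--5.2 are in place. The two things worth double-checking are that Remark \ref{Discontinuidad de la inversa} is stated for bijections between (possibly distinct) domains, so that it genuinely applies to $f \in \IET(\S^{1},\mathcal{D})$, and that in the double application of the composition bound the discontinuity sets are transported by the correct maps (so that $f \circ \rho(-t_{q})$ appears where the formula predicts). The heart of the lemma has already been done in establishing $B \subset BP_{0}(\rho(\pm t_{q}))$ together with $BP_{0}(f) \subset B$; after that the lemma is essentially bookkeeping.
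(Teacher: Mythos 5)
Your proposal is correct and follows essentially the same route as the paper: both apply Remark \ref{Discontinuidad de la inversa} twice to decompose $BP_{0}(f \circ \rho(t_{q}) \circ f^{-1})$, then absorb the extra terms using $BP_{0}(f^{-1}) = f(BP_{0}(f))$, $BP_{0}(f) \subset B \subset BP_{0}(\rho(\pm t_{q}))$, and $\rho(-t_{q})(BP_{0}(\rho(-t_{q}))) = BP_{0}(\rho(t_{q}))$. The only cosmetic difference is a citation detail (the inclusion $B \subset BP_{0}(\rho(t))$ for $t \in T$ comes from the proposition preceding Corollary \ref{B finite}), which does not affect the argument.
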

\begin{proof}

Applying Remark \ref{Discontinuidad de la inversa} we have the following inclusions 
$$BP_{0}(f \circ \rho(t_{q}) \circ f^{-1}) \subset BP_{0}(f^{-1}) \cup f(BP_{0}(f \circ \rho(t_{q})))  $$ $$\subset f(BP_{0}(f)) \cup f(BP_{0}(f \circ \rho(t_{q}))).$$
Also, $BP_{0}(f \circ \rho(t_{q})) \subset BP_{0}(\rho(t_{q})) \cup \rho(-t_{q})(BP_{0}(f))$. 

Since $BP_{0}(f) \subset B \subset BP_{0}(\rho(-t_q))$ and we have 
$$BP_{0}(f \circ \rho(t_{q})) \subset  BP_{0}(\rho(t_{q})) \cup \rho(-t_{q})(BP_{0}(f)) \subset BP_{0}(\rho(t_{q})) \cup \rho(-t_{q})(B) \subset $$ $$ \subset BP_{0}(\rho(t_{q})) \cup \rho(-t_{q})(BP_{0}(\rho(-t_{q}))) = BP_{0}(\rho(t_{q})).$$

On the other hand \(BP_{0}(f) \subset B \subset BP_{0}(\rho(t_{q}))\).

Then $BP_{0}(f \circ \rho(t_{q}) \circ f^{-1}) \subset f(BP_{0}(\rho(t_{q})))$.
\end{proof}

\begin{lema}
It holds $BP_{0}(f \circ \rho(t_{q}) \circ f^{-1}) \subset f(B)$.
\end{lema}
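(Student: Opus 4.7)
The plan is to combine the previous lemma with the observation that $g := f \circ \rho(t_q) \circ f^{-1}$ is actually continuous at $f(x)$ for every $x \in BP_0(\rho(t_q)) \setminus B$. Together with the inclusion $BP_0(g) \subset f(BP_0(\rho(t_q)))$ already established, this yields the claim. Fix such an $x$ and set $z := f(x)$. Since $BP_0(f) \subset B$ (Remark \ref{discontinuidades de f-1 e intervalos invariantes}) combined with Remark \ref{Discontinuidad de la inversa} gives $BP_0(f^{-1}) \subset f(B)$, the map $f^{-1}$ is continuous at $z$; by Proposition \ref{discontinuidades que no estan en B van a B}, both $\rho(t_q)(x) =: b_k$ and $y_1 := \lim_{y \to x^-} \rho(t_q)(y) =: b_{k'}$ belong to $B$.

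Right continuity of $g$ at $z$ is obtained immediately by chasing definitions: $f^{-1}(w) \to x^+$ as $w \to z^+$, then $\rho(t_q)(f^{-1}(w)) \to b_k$ from the right by right continuity of $\rho(t_q)$ at $x$, and right continuity of $f$ at $b_k$ concludes. The same tracing for left limits yields $\lim_{w \to z^-} g(w) = \lim_{v \to b_{k'}^-} f(v)$. By construction of $f$, if $k' - 1 = \tau^h(m_j)$ then the arc $f(B_{k'-1})$ is glued in $(\S^1)_{\lambda_j}$ immediately before $f(B_{\tau^{h+1}(m_j)}) = f(B_{\tau(k'-1)})$, so this left limit equals $f(b_{\tau(k'-1)})$. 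Left continuity at $z$ is therefore equivalent to the combinatorial identity $\tau(k'-1) = k$.

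Verifying this identity is the heart of the argument, and the main obstacle lies in tracking the side/orientation bookkeeping. I would split into two cases according to which side of $B_j$ contains the type 1 interval adjacent to $x$; such an interval always exists since non-$B$ discontinuities in $B_j$ sit at type 1/type 2 interfaces. If $x$ is the right endpoint of a type 1 interval $[b_j, x) \subset (B_j)_-$, then Remark \ref{intervalos chicos cambian de lado} places its image in $(B_{\sigma_1(j)})_+$, whose only endpoint in $B$ on the right is $b_{\sigma_1(j)+1}$, so $k' = \sigma_1(j) + 1$; meanwhile Remark \ref{type 2 no change} applied to the type 2 interval starting at $x$ forces $\rho(t_q)(x) \in B \cap B_j = \{b_j\}$, giving $k = j$. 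Since $\sigma_1(j) \neq j$ in this situation, one has $\sigma(j) = \sigma_1(j)$, and Remark \ref{pseudoinversa de salto} delivers $\tau(\sigma_1(j)) = j$ as needed. The symmetric case $[x, b_{j+1}) \subset (B_j)_+$ yields $k = \tau_1(j)$ and $k' = j + 1$, reducing the identity to $\tau(j) = \tau_1(j)$, which follows from the definition of $\tau$. In both cases $\tau(k' - 1) = k$, so $g$ is continuous at $z$ and the inclusion $BP_0(f \circ \rho(t_q) \circ f^{-1}) \subset f(B)$ is established.
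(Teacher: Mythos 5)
Your overall strategy is the same as the paper's: reduce via the previous lemma to points $f(a)$ with $a \in BP_{0}(\rho(t_{q}))\setminus B$, dispose of right continuity by definition chasing, and match the left limit of $f \circ \rho(t_{q}) \circ f^{-1}$ against its value through the combinatorics of $\sigma$ and $\tau$; your identifications of $k$ and $k'$ in the two side cases are correct. The gap is the step ``the arc $f(B_{k'-1})$ is glued in $(\S^{1})_{\lambda_{j}}$ immediately before $f(B_{\tau(k'-1)})$, so this left limit equals $f(b_{\tau(k'-1)})$''. By the construction of $f$, the arc following $f(B_{\tau^{h}(m_{j})})$ is $f(B_{\tau^{h+1}(m_{j})})$ only for $h < k_{m_{j}}-1$; at the last position $h = k_{m_{j}}-1$ the successor, going around the circle $(\S^{1})_{\lambda_{j}}$, is the first arc $f(B_{m_{j}})$, and this equals $f(b_{\tau(k'-1)})$ only if $\tau^{k_{m_{j}}}(m_{j}) = m_{j}$. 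That identity fails precisely when the orbit of $m_{j}$ is of type $L$ with $k_{m_{j}} > 1$, since then its terminal index is a fixed point of $\tau$ (Proposition \ref{Inicio de Ci}); so you must rule out that $k'-1$ sits at this terminal position, and your write-up never addresses it. This seam case is exactly what the paper's Cases 2 and 4 are devoted to, where the existence of such a discontinuity point is shown to force $\tau^{k_{m_{j}}}(m_{j}) = m_{j}$, respectively $\sigma(m_{j}) = \tau^{k_{m_{j}}-1}(m_{j})$.

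The repair is short and uses only facts you already invoke. In your case (i), Remark \ref{pseudoinversa de salto} gives $\tau(k'-1) = \tau(\sigma_{1}(j)) = j \neq \sigma_{1}(j) = k'-1$; in your case (ii), $\tau(k'-1) = \tau(j) = \tau_{1}(j) \neq j = k'-1$ (here you also use silently, as does the paper, that a type 1 interval adjacent to $b_{j+1}$ forces $\tau_{1}(j) \neq j$, the analogue for $\tau_{1}$ of the remark stated for $\sigma_{1}$). Hence $k'-1$ is never a fixed point of $\tau$, so it cannot be the terminal index of an $L$-orbit; therefore either $k'-1$ is an interior index of the gluing, or it is the last index of a cyclic ($S$-type) orbit, and in the latter case the successor arc is $f(B_{m_{j}}) = f(B_{\tau^{k_{m_{j}}}(m_{j})}) = f(B_{\tau(k'-1)})$ after all. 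With that observation inserted, your argument closes and amounts to the paper's proof reorganized into two cases instead of four.
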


\begin{proof}
By the previous lemma to study the discontinuity of $f \circ \rho(t_{q}) \circ f^{-1}$ it is enough to consider the set $f(BP_{0}(\rho(t_{q}))$.

Let $x_{0} = f(a)$ for $a \in BP_{0}(\rho(t_{q})) \setminus B$ with \(x_{0} \in \S_{\lambda_{j}}\).

We separate into 4 sub cases.

Case 1: $a \in \left(B_{\tau^{h}(m_{j})}\right)_{+}$ and $h < k_{m_{j}} - 1$.

Let us calculate the left-limit in $x_{0}$.

Since $a \notin BP_{0}(f)$ (or equivalently $x_{0} = f(a) \in Cont(f^{-1})$) then $\lim_{x \to x_{0}^{-}} f \circ \rho(t_{q}) \circ f^{-1}(x) = \lim_{y \to a^{-}} f \circ \rho(t_{q})(y)$.

Since $a \in BP_{0}(\rho(t_{q})) \cap \left(B_{\tau^{h}(m_{j})}\right)_{+}$, then $\lim_{y \to a^{-}} \rho(t_{q})(y) = (b_{\tau^{h}(m_{j})+1})$, also there exists \(\delta > 0\) such that \(\rho(t_ {q})(B^*(a,\delta)_{-}) \subset (B_{\tau^{h}(m_{j})})_{+}\).

So by definition of $f$ we conclude 
$$\lim_{x \to x_{0}^{-}} f \circ \rho(t_{q}) \circ f^{-1}(x) = \lim_{z \to \left(b_{\tau^{h}(m_{j})+1}\right)^{-}} f(z) = $$
 $$ \lim_{z \to \left(b_{\tau^{h}(m_{j})+1}\right)^{-}} \mathcal{I}_{j}\left(\pi_{\lambda_{j}}\left(diam([b_{\tau^{h}(m_{j})},z)) +  \sum_{l < h}\mu(B_{\tau^{l}(m_{j})})\right)\right) = $$ 
$$=\mathcal{I}_{j}\left(\pi_{\lambda_{j}}\left(\sum_{i \leq h} \mu(B_{\tau^{l}(m_{j})})\right)\right).$$

In the other hand, since $a \in (BP_{0}(\rho(t_{q})) \setminus B) \cap \left(B_{\tau^{h}(m_{j})}\right)_{+}$ then $\rho(t_{q})(a) = b_{\tau^{h+1}(m_{j})}$, so $f \circ \rho(t_{q}) \circ f^{-1}(x_{0}) = f(b_{\tau^{h+1}(m_{j})}) = \mathcal{I}_{j}\left(\pi_{\lambda_{j}}\left(\sum_{i \leq h} \mu(B_{\tau^{l}(m_{j})})\right)\right)$.

Case 2: $a \in \left(B_{\tau^{k_{m_{j} - 1}}(m_{j})}\right)_{+}$.

Since $a \in BP_{0}(\rho(t_{q})) \setminus B$ then $\tau^{k_{m_{j}}}(m_{j}) = m_{j}$, so $\rho(t_{q})(a) = b_{m_{j}}$. 
Then we conclude that $f \circ \rho(t_{q}) \circ f^{-1}(x_{0}) = \mathcal{I}_{j}(\pi_{\lambda_{j}}(\lambda_{j}))$.

Applying the analogous argument to the case 1 we have $$\lim_{x \to x_{0}^{-}} f \circ \rho(t_{q}) \circ f^{-1}(x) = \lim_{z \to \left(b_{\tau^{k_{m_{j}}-1}(m_{j})+1}\right)^{-}} f(z) = $$
 $$ \lim_{z \to \left(b_{\tau^{k_{m_{j}}-1}(m_{j})+1}\right)^{-}} \mathcal{I}_{j}\left(\pi_{\lambda_{j}}\left(diam([b_{\tau^{k_{m_{j}}-1}(m_{j})},z)) +  \sum_{l < k_{m_{j}}-1}\mu(B_{\tau^{l}(m_{j})})\right)\right) = $$
$$ = \mathcal{I}_{j}\left(\pi_{\lambda_{j}}\left(\sum_{i \leq k_{m_{j}}-1} \mu(B_{\tau^{l}(m_{j})})\right)\right) = \mathcal{I}_{j}\left(\pi_{\lambda_{j}}(\lambda_{j})\right).$$

Case 3:  $a \in \left(B_{\tau^{h}(m_{j})}\right)_{-}$ and $h > 0$.

This case is similar to the case 1, in particular, $\lim_{x \to x_{0}^{-}} f \circ \rho(t_{q}) \circ f^{-1}(x) = \lim_{y \to a^{-}} f \circ \rho(t_{q})(y)$.

Since $a \in BP_{0}(\rho(t_{q})) \cap \left(B_{\tau^{h}(m_{j})}\right)_{-}$, then $\lim_{y \to a^{-}} \rho(t_{q})(y) = (b_{\tau^{h-1}(m_{j})+1})$, also there exists \(\delta > 0\) such that \(\rho(t_ {q})(B^*(a,\delta)_{-}) \subset (B_{\tau^{h-1}(m_{j})})_{+}\). By definition of $f$ we have

 $$\lim_{x \to x_{0}^{-}} f \circ \rho(t_{q}) \circ f^{-1}(x) = \lim_{z \to \left(b_{\tau^{h-1}(m_{j})+1}\right)^{-}} f(z) =$$ 
 $$= \lim_{z \to \left(b_{\tau^{h-1}(m_{j}) + 1}\right)^{-}} \mathcal{I}_{j}\left(\pi_{\lambda_{i}}\left(diam([b_{\tau^{h-1}(m_{j})},z)) +  \sum_{l < h-1}\mu(B_{\tau^{l}(m_{j})})\right)\right) = $$
 $$= \mathcal{I}_{j}\left(\pi_{\lambda_{i}}\left(\sum_{l < h} \mu(B_{\tau^{l}(m_{j})})\right)\right).$$

In the other hand 

$$f \circ \rho(t_{q}) \circ f^{-1}(x_{0}) = f \circ \rho(t_{q})(a) = f(b_{\tau^{h}(m_{j})}) = \mathcal{I}_{j}\left(\pi_{\lambda_{i}}\left(\sum_{l < h} \mu(B_{\tau^{l}(m_{j})})\right)\right).$$

Case 4: $a \in \left(B_{m_{j}}\right)_{-}$.

This case is analogous to the case 2.

Since $a \in BP_{0}(\rho(t))$ then $\sigma(m_{j}) = \tau^{k_{j}-1}(m_{j})$. 
Applying the analogous argument to the case 2 we have that
$$\lim_{x \to x_{0}^{-}} f \circ \rho(t_{q}) \circ f^{-1}(x) = $$ 
$$ = \lim_{z \to \left(b_{\tau^{k_{m_{j}}-1}(m_{j})+1}\right)^{-}} \mathcal{I}_{j}\left(\pi_{\lambda_{j}}\left(diam([b_{\tau^{k_{m_{j}}-1}(m_{j})},z)) +  \sum_{l < k_{m_{j}}}\mu(B_{\tau^{l}(m_{j})})\right)\right)$$ 
$$= \mathcal{I}_{j}(\pi_{\lambda_{j}}(\lambda_{j})).$$

In the other hand $f \circ \rho(t_{q}) \circ f^{-1}(x_{0}) = f \circ \rho(t_{q})(a) = f(b_{m_{j}}) = \mathcal{I}_{j}(\pi_{\lambda_{j}}(0))$.

We conclude that $BP_{0}(f \circ \rho(t_{q}) \circ f^{-1}) \subset f(B)$.
\end{proof}

\begin{lema}
The function $f \circ \rho(t_{q}) \circ f^{-1}$ is continuous in $\mathcal{I}_{j}((\S^{1})_{\lambda_{j}})$.
\end{lema}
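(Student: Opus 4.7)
The plan is to use the two preceding lemmas to reduce the set of candidate discontinuities to a finite set, verify right-continuity trivially, and then prove left-continuity by a case analysis driven by the $\tau$/$\sigma$ structure and the bijectivity of $\rho(t_{q})$.

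First, by the two preceding lemmas, $BP_{0}(f \circ \rho(t_{q}) \circ f^{-1}) \cap \mathcal{I}_{j}((\S^{1})_{\lambda_{j}}) \subset \{f(b_{i}) : i \in O(m_{j})\}$, so it suffices to check continuity at each of these finitely many points. Fix $i = \tau^{h}(m_{j})$ and set $i' := \tau^{h-1}(m_{j})$ (with the cyclic convention $\tau^{-1}(m_{j}) := \tau^{k_{m_{j}}-1}(m_{j})$ when $h = 0$); note that $i \neq i'$ since consecutive $\tau$-iterates in $\{0,\ldots,k_{m_{j}}-1\}$ are distinct. Right-continuity at $f(b_{i})$ is immediate from the right-continuity of $\rho(t_{q})$ and $f$.

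For left-continuity, the construction of $f$ identifies the left-approach $x \to f(b_{i})^{-}$ in $(\S^{1})_{\lambda_{j}}$ with $y = f^{-1}(x) \to b_{i'+1}^{-}$ in $\S^{1}$. Setting $c := \lim_{y \to b_{i'+1}^{-}} \rho(t_{q})(y)$ and $R := \rho(t_{q})(b_{i})$, the left-limit of $f \circ \rho(t_{q}) \circ f^{-1}$ at $f(b_{i})$ equals $\lim_{z \to c^{-}} f(z)$ while the right-value equals $f(R)$. The plan is to reduce the whole matter to the identity
\[ c = R \quad \text{and} \quad c \notin B, \]
so that $f$ is continuous at this common point and both one-sided values coincide with $f(c) = f(R)$.

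To establish this identity, I would split according to whether $\tau(i') = i$ comes from $\tau_{1}(i')$ (Case I) or from $\tau_{2}(i')$ (Case II). In Case I, the interval of continuity of $\rho(t_{q})$ ending at $b_{i'+1}^{-}$ is a type-1 interval $[p, b_{i'+1}) \subset (B_{i'})_{+}$ whose image is $[b_{i}, c) \subset (B_{i})_{-}$; the one-$B$-endpoint property (Remark \ref{type 1 intersection B = 1}) gives $c \notin B$, and Proposition \ref{t o -t} applied to the type-1 interval $[b_i,c)$ of $\rho(-t_{q})$ in $(B_{i})_{-}$ forbids a type-1 of $\rho(t_{q})$ in $(B_{i})_{-}$, forcing $\sigma_{1}(i) = i$ and $R \in B_{i}$. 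Bijectivity of $\rho(t_{q})$, combined with the decomposition $\rho(t_{q})^{-1}(B_{i}) = \rho(-t_{q})(B_{i}) = [p, b_{i'+1}) \sqcup \rho(-t_{q})([c, b_{i+1}))$, then forces the type-2 piece of $\rho(t_{q})$ starting at $b_{i}$ to map onto $[c, b_{i+1})$, yielding $R = c$. Case II is the mirror argument: $\rho(-t_{q})$ has a type-1 $[p', b_{i'+1}) \subset (B_{i'})_{+}$ with image $[b_{i}, c')$, so applying $\rho(t_{q})$ directly gives $R = p'$, and a symmetric bijectivity argument applied now to $B_{i'}$ (whose type-2 interval of $\rho(t_{q})$ ending at $b_{i'+1}^{-}$ maps onto $[b_{i'}, p')$) yields $c = p'$ as well.

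The technical heart of the proof, and the main obstacle, is the bijectivity-plus-configuration analysis forcing $c = R$. It requires simultaneously tracking the partition of each $B_{k}$ into type-1 and type-2 pieces at both times $\pm t_{q}$, and combining the group property $\rho(t_{q}) \circ \rho(-t_{q}) = \mathrm{id}$ with Proposition \ref{t o -t} to exclude simultaneous same-side type-1 intervals at $\pm t_{q}$ on the same $B_{k}$. Once the identity $c = R \notin B$ is established, continuity of $f$ at this point gives the equality of left-limit and right-value at $f(b_{i})$, completing the proof.
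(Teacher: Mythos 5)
Your reduction to the finite set $f(B)\cap \mathcal{I}_{j}((\S^{1})_{\lambda_{j}})$ and your treatment of the \emph{interior} gluing points $f(b_{\tau^{h}(m_{j})})$, $0<h<k_{m_{j}}$, is essentially the paper's argument: your Case I/Case II dichotomy is the paper's Case 1/Case 2 (related through Remark \ref{pseudoinversa de salto}), and the target identity $c=R\notin B$ is exactly what the paper verifies. (Two small inaccuracies there: the type-2 piece of $\rho(t_{q})$ starting at $b_{i}$ maps onto the type-2 piece of $\rho(-t_{q})$ inside $B_{i}$, which need not be all of $[c,b_{i+1})$ since $\rho(-t_{q})$ may still have a type-1 piece at the right end of $B_{i}$; the conclusion $R=c$ is better obtained, as in the paper, from $c\in BP_{0}(\rho(-t_{q}))\setminus B$ together with Proposition \ref{discontinuidades que no estan en B van a B} and Remark \ref{type 2 no change}, which give $\rho(-t_{q})(c)=b_{i}$ directly.)

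The genuine gap is the point $f(b_{m_{j}})$, i.e.\ your case $h=0$ handled by the ``cyclic convention''. That convention presumes $\tau(\tau^{k_{m_{j}}-1}(m_{j}))=m_{j}$, which is true for $S$-orbits but false for $L$-orbits: when $\sigma(m_{j})=m_{j}$, Proposition \ref{Inicio de Ci} says the $\tau$-orbit is a path whose last index $i'=\tau^{k_{m_{j}}-1}(m_{j})$ is fixed by $\tau$ (and if $k_{m_{j}}=1$ even your assertion $i\neq i'$ fails). At this seam neither $\rho(t_{q})$ nor $\rho(-t_{q})$ has a type-1 interval at the right end of $B_{i'}$ or at the left end of $B_{m_{j}}$, so there is no jump on which to run Case I/II, and the identity ``$c=R\notin B$'' is not even meaningful: here $c=\lim_{y\to b_{i'+1}^{-}}\rho(t_{q})(y)$ lies in $\overline{B_{i'}}$ while $R=\rho(t_{q})(b_{m_{j}})\in B_{m_{j}}$, two different $B$-intervals when $k_{m_{j}}>1$. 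Continuity at $f(b_{m_{j}})$ in that situation amounts to the much more delicate facts that $\rho(t_{q})(b_{m_{j}})=b_{m_{j}}$ and that the left limit of $\rho(t_{q})$ at $b_{i'+1}$ is exactly $b_{i'+1}$ (this configuration really occurs, e.g.\ for a flow with a fixed point cut and rearranged by an interval exchange, where the fixed point sits at the seam), and none of this follows from the $\tau/\sigma$ bookkeeping you use. The paper deliberately avoids proving anything at this point: it establishes continuity only at the $k_{m_{j}}-1$ interior seams, concludes $BP_{0}\bigl((f\circ\rho(t_{q})\circ f^{-1})\vert_{\mathcal{I}_{j}((\S^{1})_{\lambda_{j}})}\bigr)\subset\{f(b_{m_{j}})\}$, and then invokes the fact that a discontinuous element of $\mathcal{PAC}_{+}$ of a circle has at least three discontinuity points, so having at most one forces continuity. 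Your proof needs this counting step, or some substitute argument at $f(b_{m_{j}})$, to be complete.
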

\begin{proof}

For simplicity we denote by \(f \circ \rho(t_{q}) \circ f^{-1}\) the restriction \((f \circ \rho(t_{q}) \circ f^{-1})\vert_{\mathcal{I}_{j}(\S_{\lambda_{j}})}\).

We will study the set \(f(B \setminus \{b_{m_{j}}\})\) first.

Let $x_{0} = f(b_{\tau^{h}(m_{j})})$, with $0 < h < k_{m_{j}}$.

By definition of $f$ we have $x_{0} = \mathcal{I}_{j}\left(\pi_{\lambda_{j}}\left(\sum_{l < h} \mu(B_{\tau^{l}(m_{j})})\right)\right)$. The left-limit of $f^{-1}$ verifies that \\ $\lim_{x \to x_{0}^{-}} f^{-1}(x) = b_{\tau^{h-1}(m_{j}) + 1}$ and there exists \(\delta \in \R^{+}\) such that $ f^{-1}(x) \in (B_{\tau^{h-1}(m_{j})})_{+}$, for any \(x \in B^{*}(x_{0},\delta)_{-}\).

Since $0< h < k_{m_{j}}$ (i.e. $\tau^{h}(m_{j}) \neq \tau^{h-1}(m_{j}) = \sigma(\tau^{h}(m_{j})) $) we have that $s = -t_{q}$ or $s = t_{q}$ verifies $\rho(s)(b_{\tau^{h}(m_{j})}) \in (B_{\tau^{h-1}(m_{j})})$, in particular, exists \(c \in B_{\tau^{h}(m_{j})}\) such that $[b_{\tau^{h}(m_{j})},c)$ is an interval of type 1, then $\rho(s)(b_{\tau^{h}(m_{j})}) \in (B_{\tau^{h-1}(m_{j})})_{+}$.
Let us see that both cases.


%
%
%
%

Case 1:  $\rho(-t_{q})(b_{\tau^{h}(m_{j})}) \in (B_{\tau^{h-1}(m_{j})})_{+}$

There exist \(c,d \in B_{\tau^{h}(m_{j})}\) such that \([b_{\tau^{h}(m_{j})},c)\) is an interval of type 1 of \(\rho(-t_{q})\) and \([c,d)\) is an interval of type 2 of \(\rho(-t_{q})\).

Since \(c \notin B\) and  \(\rho(-t_{q})([c,d)) \subset B_{\tau^{h}(m_{j})}\) we have \(\rho(-t_{q})(c) = b_{\tau^{h}(m_{j})}\).

Since \(\rho(-t_{q})([b_{\tau^{h}(m_{j})},c)) \subset (B_{\tau^{h-1}(m_{j})})_{+}\) is an interval of type 1 of \(\rho(t_{q})\), we have that \\ \(\rho(-t_{q})([b_{\tau^{h}(m_{j})},c)) = [\rho(-t_{q})(b_{\tau^{h}(m_{j})}),b_{\tau^{h-1}(m_{j})+1})\).
In particular \(\lim_{x \to (b_{\tau^{h-1}(m_{j})+1})^{-}} \rho(t_{q})(x) = c\).

In summary:
$$\lim_{x \to x_{0}^{-}} f \circ \rho(t_{q}) \circ f^{-1}(x) = \lim_{x \to (b_{\tau^{h-1}(m_{j})+1})^{-}} f \circ \rho(t_{q})(x) = \lim_{x \to c^{-}} f(x) = $$ 
$$f(c) = f \circ \rho(t_{q})(b_{\tau^{h}(m_{j})}) =  f \circ \rho(t_{q}) \circ f^{-1}(x_{0}),$$
then \(f \circ \rho(t_{q}) \circ f^{-1}\) is continuous in \(x_{0}\).

Case 2: $\rho(t_{q})(b_{\tau^{h}(m_{j})}) \in B(_{\tau^{h-1}(m_{j})})_{+}$.

Adapting the previous argument we have that there \(c \in 
B_{\tau^{h}(m_{j})}\) such that \([b_{\tau^{h}(m_{j})+1},c)\) is an interval of type 1 of \(\rho(t_{q})\). Also we have that \(\rho(t_{q})([b_{\tau^{h}(m_{j})}),c) = [\rho(t_{q})(b_{\tau^{h}(m_{j})}), b_{\tau^{h-1}(m_{j})+1})\subset (B_{\tau^{h-1}(m_{j})})_{+}\) is an interval of type 1 of \(\rho(-t_{q})\). 

Let \(d = \rho(t_{q})(b_{\tau^{h}(m_{j})})\), there exists \(d^{\prime} \in B_{\tau^{h-1}(m_{j})}\) such that \([d^{\prime},d)\) is an interval of type 2 of \(\rho(-t_{q})\), in particular \(\lim_{x \to d^{-}} \rho(-t_{q})(x) = b_{\tau^{h-1}(m_{j})+1}\), equivalently \(\lim_{x \to (b_{\tau^{h-1}(m_{j})+1})^{-}} \rho(t_{q})(x) = d = \rho(t_{q})(b_{\tau^{h}(m_{j})})\).

In summary:
$$\lim_{x \to x_{0}^{-}} f \circ \rho(t_{q}) \circ f^{-1}(x) = \lim_{x \to (b_{\tau^{h-1}(m_{j})+1})^{-}} f \circ \rho(t_{q})(x) = \lim_{x \to d^{-}} f(x) = $$ 
$$f(d) = f \circ \rho(t_{q})(b_{\tau^{h}(m_{j})}) =  f \circ \rho(t_{q}) \circ f^{-1}(x_{0})$$
then \(f \circ \rho(t_{q}) \circ f^{-1}\) is continuous in \(x_{0}\).
%
%
%

We conclude that \(BP_{0}(f \circ \rho(t_{q}) \circ f^{-1}) \subset \{ f(b_ {m_{j}})\}\), in particular \(\# BP_{0}( f \circ \rho(t_{q})) \circ f^{-1}) \leq 1\) then \(f \circ \rho(t_{q}) \circ f^{-1}\) is continuous.

%
%
\end{proof}

We need to extend this result from the sequence \(t_{q}\) to any \(t \in \R\).

\begin{lema} \label{Ij is inveiant for rho(t)}
The set $\mathcal{I}_{j}((\S^{1})_{\lambda{j}})$ is invariant by the functions $f \circ \rho(t) \circ f^{-1}$ for all $t \in \R$.
\end{lema}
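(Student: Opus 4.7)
The plan is to show that the set
\[ G_j = \{ t \in \R : (f \circ \rho(t) \circ f^{-1})(\mathcal{I}_j((\S^1)_{\lambda_j})) = \mathcal{I}_j((\S^1)_{\lambda_j}) \} \]
is both dense and closed in $\R$. Density comes essentially for free from previous work: $G_j$ is a subgroup of $\R$ because $\rho$ is a homomorphism and hence $f \circ \rho(s+t) \circ f^{-1} = (f \circ \rho(s) \circ f^{-1}) \circ (f \circ \rho(t) \circ f^{-1})$, and by Remark \ref{discontinuidades de f-1 e intervalos invariantes} every $\pm t_q$ belongs to $G_j$. Since $t_q \to 0$, the subgroup of $\R$ generated by $\{t_q\}$ is dense, so $G_j$ is dense.

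The main step is closedness. Write $A = \mathcal{I}_j((\S^1)_{\lambda_j})$, $g_t = f \circ \rho(t) \circ f^{-1}$, and let $L = \sum_i \lambda_i$, so that $\d_D(x,y) = L$ whenever $x \in A$ and $y \notin A$. Suppose $t_n \in G_j$ with $t_n \to t$. By continuity of $\rho$ and of conjugation (Remark \ref{conjugar es continuo}), $g_{t_n} \to g_t$ in $d_D$, and in particular $\tilde{d}_1(g_{t_n}, g_t) \to 0$ and $\tilde{d}_1(g_{t_n}^{-1}, g_t^{-1}) \to 0$. For every $x \in A \cap g_t^{-1}(A^c)$ we have $g_{t_n}(x) \in A$ (because $g_{t_n}(A) = A$) while $g_t(x) \notin A$, so $\d_D(g_{t_n}(x), g_t(x)) = L$. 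Thus $A \cap g_t^{-1}(A^c) \subset U(g_{t_n},g_t)[L/2]$, and by Proposition \ref{convergence by sets} its measure is zero.

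To convert this null-set statement into a set-theoretic one I will use the structure of $g_t \in \mathcal{PAC}_+(\mathcal{D})$. On each maximal interval of continuity $I \subset A$ of $g_t$, the map $g_t|_I$ is continuous and its image must therefore lie inside a single connected component of $\mathcal{D}$. If $g_t(I) \subset A^c$ then $I \subset A \cap g_t^{-1}(A^c)$, forcing $\mu(I) = 0$, which is impossible since maximal intervals of continuity are non-degenerate. Hence $g_t(I) \subset A$ for all such $I$. For a discontinuity point $x_0 \in A$, the right-continuity of $g_t$ gives $g_t(x_0) = \lim_{x \to x_0^+} g_t(x)$, and this limit lies in $\overline{A} = A$ since $A$ is a closed component. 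Consequently $g_t(A) \subset A$.

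The same argument applied to $g_t^{-1}$, using $\tilde{d}_1(g_{t_n}^{-1}, g_t^{-1}) \to 0$ and the fact that $g_{t_n}^{-1}(A) = A$, yields $g_t^{-1}(A) \subset A$, equivalently $A \subset g_t(A)$, and hence $g_t(A) = A$, so $t \in G_j$. Therefore $G_j$ is closed, and being a dense closed subgroup of $\R$ it must equal $\R$. The main obstacle is precisely the passage from ``$A \cap g_t^{-1}(A^c)$ has measure zero'' to ``$A \cap g_t^{-1}(A^c) = \emptyset$''; the key observation that makes this succeed is that maximal intervals of continuity are non-degenerate and that right-continuity propagates membership in the closed set $A$ to the isolated discontinuity points.
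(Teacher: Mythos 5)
Your proof is correct, and it rests on the same two pillars as the paper's: invariance of $\mathcal{I}_{j}((\S^{1})_{\lambda_{j}})$ under $f\circ\rho(\pm t_{q})\circ f^{-1}$ (Remark \ref{discontinuidades de f-1 e intervalos invariantes}) extended to finite combinations by the homomorphism property, plus continuity of $\rho$ to reach all of $\R$. Where you diverge is in the transfer step. The paper argues by contradiction downstairs in $\S^{1}$: if $\rho(t)$ sends some $a\in B_{\tau^{h}(m_{j})}$ into a block of a different orbit, right-continuity promotes this to a whole interval $[a,c)$, and $d$-closeness of $\rho(s)$ to $\rho(t)$ forces at least one point of $[a,c)$ to escape under $\rho(s)$ as well; taking $s$ a finite sum of the $t_{q}$ near $t$ contradicts the known invariance. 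You instead prove that the set $G_{j}$ of good times is a closed (dense) subgroup, working upstairs in $\mathcal{D}$ and exploiting that $\d_{D}$ assigns the full length $L$ to pairs of points in different components, so that the escaping set is trapped in $U(g_{t_{n}},g_{t})[L/2]$ and is null by Proposition \ref{convergence by sets}; the passage from null to empty via connectedness of the image of each (non-degenerate) maximal interval of continuity is sound, and the symmetric argument for $g_{t}^{-1}$ correctly upgrades $g_{t}(A)\subset A$ to equality. Your packaging buys a cleaner logical structure (invariance as a closed condition, negative times and sums handled automatically by the subgroup formulation, with Remark \ref{conjugar es continuo} justifying $g_{t_{n}}\to g_{t}$), at the cost of a measure-theoretic detour; the paper's version stays pointwise and never needs the null-set-to-empty-set conversion, but has to exhibit the escaping point for the approximating finite sum by hand. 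Both ultimately use the same density fact about the subgroup generated by $\{t_{q}\}$, so I would classify yours as a genuinely different, and slightly more systematic, route to the same estimate.
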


\begin{proof}
Suppose that there exists \(t \in \R^{+}\) and \(x \in  (\S^{1})_{\lambda{j}}\) such that \(f \circ \rho(t) \circ f^{-1}(x) \notin \mathcal{I}_{j}((\S^{1})_{\lambda{j}})\), the case \(t \in \R^{-}\) is analogous. 
This property is equivalent to the next one: there exist \(h, h^{\prime}, j^{\prime} \in \N\) and \(a \in \S^{1}\) such that \(j^{\prime} \neq j\), \(h < k_{j}\), \(h^{\prime} < k_{j^{\prime}}\) \(a \in B_{\tau^{h}(m_{j})}\) and \(\rho(t)(a) \in  B_{\tau^{h^{\prime}}(m_{j^{\prime}})}\).

Applying the right continuity of \(\rho(t)\) there exists \(c \in B_{\tau^{h}(m_{j})} \setminus \{a\}\) such that \(\rho(t)([a,c)) \subset B_{\tau^{h^{\prime}}(m_{j^{\prime}})}\).

Since \(\rho\) is continuous there exists \(\delta^{\prime} > 0\) such that for any \(s \in (t-\delta^{\prime},t+\delta^{\prime})\) there exists \(a_{s} \in [a,c)\) such that \(\rho(s)(a_{s}) \in  B_{\tau^{h^{\prime}}(m_{j^{\prime}})}\).

There exists a finite sequence \(n_{k}\) such that \(\sum_{k \leq N} t_{q_{n_{k}}} \in (t-\delta,t+\delta)\). So we can conclude that  $\mathcal{I}_{j}((\S^{1})_{\lambda{j}})$ is not invariant for \(\rho(\sum_{k \leq N} t_{q_{n_{k}}}) = \rho(t_{q_{n_{N}}}) \circ ... \circ \rho(t_{q_{n_{0}}})\) which is a contradiction.
\end{proof}

Since \(\int_{\mathcal{I}_{j}((\S^{1})_{\lambda_{j}})} \d_{\lambda_{j}}(g_{1}(x),g_{2}(x)) + \d_{\lambda_{j}}(g_{1}^{-1}(x),g_{2}^{-1}(x)) \leq d_{D}(g_{1},g_{2})\) by Remark \ref{conjugar es continuo} the homomorphism \(\tilde{\rho} = (f \circ \rho \circ f^{-1})\vert_{\mathcal{I}_{j}}\) is continuous. 
Since \(\tilde{\rho}(t_{q}) \in \mathcal{AC}_{+}(\mathcal{I}_{j}(\S^{1})_{\lambda_{j}})\), by Proposition \ref{continuidad en una sucesion implica continuidad} we have that \(\tilde{\rho}(\R) \subset \mathcal{AC}_{+}(\mathcal{I}_{j}(\S^{1})_{\lambda_{j}})\), then we finish the proof of the main theorem.

\section{Considerations of the main theorem}

In the case $\rho(\R) \subset IET(\S^{1})$ (similar to the hypothesis of main theorem of \cite{Novak1}) we have $\# [\mathring{B_{i}} \cap BP_{0}(\rho(t))] \leq 1$ for any $t$ and $i$. But this is not true in general case.

\begin{ej} \label{ejemplo con dos discontinuidades en un intervalo}
Let $\Phi:(\S^{1})_{\frac{1}{2}} \times \R \to (\S^{1})_{\frac{1}{2}}$ a differential flow with two fixed points, $\frac{1}{8}$ and $\frac{3}{8}$. The first one is a repeller and the other is  an attractor . Note that \(\Phi(t)(\frac{1}{4}) \in [0,\frac{1}{4}) \subset (\S^{1})_{1/2}\) and \(\lim_{x \to (\frac{1}{2})^{-}} \Phi(t)(x) \in [0,\frac{1}{4}) \subset (\S^{1})_{1/2}\).

Let $D = (\S^{1})_{\frac{1}{2}} \uplus (\S^{1})_{\frac{3}{8}} \uplus (\S^{1})_{\frac{1}{8}}$.

We denote by \(\psi:\R \to C^{1}(D)\) the homomorphism defined by \(\psi(t)(x) = \Phi(t)(x)\) if \(x \in (\S^{1})_{\frac{1}{2}}\) and \(\psi(t)(x) = x\) if \(x \notin (\S^{1})_{\frac{1}{2}}\).

Let \(h \in IET(D,\S^{1})\) defined by 

$$h(x) =  \left \{ \begin{matrix}
x & \text{ if } x \in (\S^{1})_{1/2} \text{ and } x < \frac{1}{4} \\
x + \frac{1}{4} &  \text{ if } x \in (\S^{1})_{\frac{3}{8}}  \\
x + \frac{3}{8} & \text{ if } x \in (\S^{1})_{\frac{1}{2}} \text{ and } x \geq \frac{1}{4} \\
x + \frac{7}{8} & \text{ if } x \in (\S^{1})_{\frac{1}{8}} 
\end{matrix}\right.$$

Let $\rho:\R \to \ACI$ the homomorphism defined by \(\rho(t) = h \circ \Phi(t) \circ h^{-1}\)

It is easy to check that $B = \{0,\frac{1}{4},\frac{5}{8},\frac{7}{8}\}$.

Note that for \(x \in [0,\frac{1}{4}) \subset (\S^{1})_{\frac{1}{2}}\) we have that \(h(x) \in [0,\frac{1}{4}) \subset \S^{1}\). Then \(B_{1} = [0,\frac{1}{4})\) has two intervals of type 1 for \(\rho(t)\) for \(t > 0\), in particular $\#(\mathring{B_{1}} \cap BP_{0}(\rho(t)) ) = 2$.

\end{ej}

The condition of group homomorphism is necessary.

\begin{ej}
Let $\Phi_{n}:\R\to \ACI$ be the function defined by

 $$\Phi_{n}(t)(x) = \left \{ \begin{matrix} x + \frac{t - \lfloor t \rfloor}{n} & \mbox{ if } x \in \left[\frac{k}{n}, \frac{k+1}{n} -  \frac{t - \lfloor t \rfloor}{n}\right) 
\\ x + \frac{t - \lfloor t \rfloor}{n} - \frac{1}{n}  & \mbox{ if } x \in \left[\frac{k+1}{n} -  \frac{t - \lfloor t \rfloor}{n} , \frac{k+1}{n}\right)  \end{matrix}\right.$$

The functions $\Phi_{n}$ are continuous and $\Phi_{n}(m) = id$ for any $m \in \Z$.

The function $\Phi:\R\to \ACI$ defined by $\Phi(t)(x) = \Phi_{\lfloor \vert t \vert\rfloor}(t)(x)$ are continuous but $\sharp \circ \Phi$ is not bounded.
\end{ej}

The proof of the Proposition \ref{sharp finite close to 0} is valid in complete groups with $0$ as limit point, i.e. $\sharp$ is bounded in a neighbourhood of $e$. But is not valid in not complete groups.

\begin{prop}
Let $G$ be a group and $d^{\prime}$ a distance such that $(G,d^{\prime})$ is a topological group and a complete metric space, such that $e \in G$ is a limit point.

For any $\tau: G \to \ACI$ continuous homomorphism there exists $U$ a neighbourhood of $e$ such that $\sharp \circ \tau\vert_{U}$ is finite.
\end{prop}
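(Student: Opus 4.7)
The plan is to mimic the proof of Proposition \ref{sharp finite close to 0}, replacing the additive partial sums $r_n = \sum_{i \leq n} t_{n_i}$ in $\R$ with multiplicative partial products $s_k = g_1 g_2 \cdots g_k$ in $G$, and invoking completeness of $(G,d')$ where the earlier proof invoked completeness of $\R$. Arguing by contradiction, I would assume no such neighbourhood exists; since $e$ is a limit point, this yields a sequence $h_n \in G$ with $h_n \to e$ and $\sharp(\tau(h_n)) \to \infty$.

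I then construct inductively a subsequence $g_k = h_{n_k}$ and partial products $s_k = g_1 \cdots g_k$ satisfying:
\begin{enumerate}
\item[(a)] $(s_k)$ is Cauchy in $(G,d')$;
\item[(b)] the limit $s = \lim s_k$ lies in a prescribed ball around each $s_m$;
\item[(c)] $\sharp(\tau(s_k)) \to \infty$.
\end{enumerate}
At step $k$, having chosen $s_k$, Corollary \ref{continuidad superior de sharp} together with continuity of $\tau$ produces an open ball $B(s_k,\eta_k)$ on which $\sharp \circ \tau \geq \sharp(\tau(s_k))$. Since left-multiplication by $s_k$ is continuous and sends $e$ to $s_k$, the set $W_k = \{g \in G : s_k g \in B(s_k, 2^{-k}\min\{\eta_1,\ldots,\eta_k\})\}$ is an open neighbourhood of $e$. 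Choose $g_{k+1}$ among the $h_n$ with $g_{k+1} \in W_k$ and $\sharp(\tau(g_{k+1})) \geq 3\sharp(\tau(s_k))$; both conditions can be met simultaneously because $h_n \to e$ while $\sharp(\tau(h_n)) \to \infty$. Then (a) follows by telescoping, (b) follows from $d'(s_m,s) \leq \sum_{k \geq m} 2^{-k}\eta_m < \eta_m$, and (c) follows from the inequality $\sharp(\tau(s_{k+1})) \geq |\sharp(\tau(g_{k+1})) - \sharp(\tau(s_k))| \geq 2\sharp(\tau(s_k))$ of Remark \ref{Discontinuidad de la inversa}.

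By completeness of $(G,d')$, the limit $s = \lim s_k$ exists in $G$; by (b) it belongs to $B(s_m,\eta_m)$ for every $m$, so $\sharp(\tau(s)) \geq \sharp(\tau(s_m)) \to \infty$, contradicting the fact that $\tau(s) \in \ACI$ has only finitely many discontinuities. The main obstacle is synchronising the three demands on $g_{k+1}$: smallness of displacement for the Cauchy condition, proximity to $e$ so that all tails stay within the nested shrinking balls used in (b), and a large $\sharp$-value to force divergence of $\sharp(\tau(s_k))$. This is resolved by noting that the first two are open conditions at $e$ and hence select a neighbourhood of $e$, while the third is satisfied by infinitely many $h_n$, so the intersection is non-empty.
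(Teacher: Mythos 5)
Your proposal is correct and is exactly the argument the paper intends: the paper proves this proposition by remarking that the proof of Proposition \ref{sharp finite close to 0} carries over verbatim to a complete metric topological group, which is precisely your adaptation (partial products $s_k=g_1\cdots g_k$ in place of partial sums, the pullback through $\tau$ of the lower semicontinuity of $\sharp$ from Corollary \ref{continuidad superior de sharp}, the inequality $\sharp(\tau(s_{k+1}))\geq\vert\sharp(\tau(g_{k+1}))-\sharp(\tau(s_k))\vert$ from Remark \ref{Discontinuidad de la inversa}, and completeness of $(G,d^{\prime})$ to produce the limit $s$ with $\sharp(\tau(s))=\infty$). Your use of the left-translation neighbourhoods $W_k$ to control $d^{\prime}(s_k,s_{k+1})$ is the right way to handle a possibly non-invariant metric, so no gap remains.
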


\begin{ej}
Let $G = \{\frac{k}{2^{n}} : k,n \in \Z \} \subset \Q$ and $d_{\Q}$ the canonical distance in $\Q$. Note that $(G,d_{\Q})$ is not complete and any $p \in G$ is a limit point.

The subset $G_{0} = \{ \frac{1}{2^{n}} : n \in \N \}$  is a generator of $G$, so any group morphism $\rho:G  \to \ACI$ is determined by the restriction in \(G_{0}\).

We define $\rho:G_{0} \to \ACI$ by 

$\rho(\frac{1}{2^{n}})(x) = \left \{ \begin{matrix} x & \mbox{ if } x \in [0,\frac{1}{2^{n}}), \\
x + \frac{1}{2^{n+1}}  & \mbox{ if } x \in \left[\frac{1}{2^{m}} ,\frac{1}{2^{m-1}}- \frac{1}{2^{n+1}}\right) \mbox{ with } m \leq n,
 \\ x - (\frac{1}{2^{m-1}} -  \frac{1}{2^{n+1}}) & \mbox{ if } x \in \left[\frac{1}{2^{m-1}}- \frac{1}{2^{n+1}}, \frac{1}{2^{m-1}}\right) \mbox{ with } m \leq n.
 \end{matrix}\right.$

\begin{figure}[h]
\includegraphics[scale=0.78]{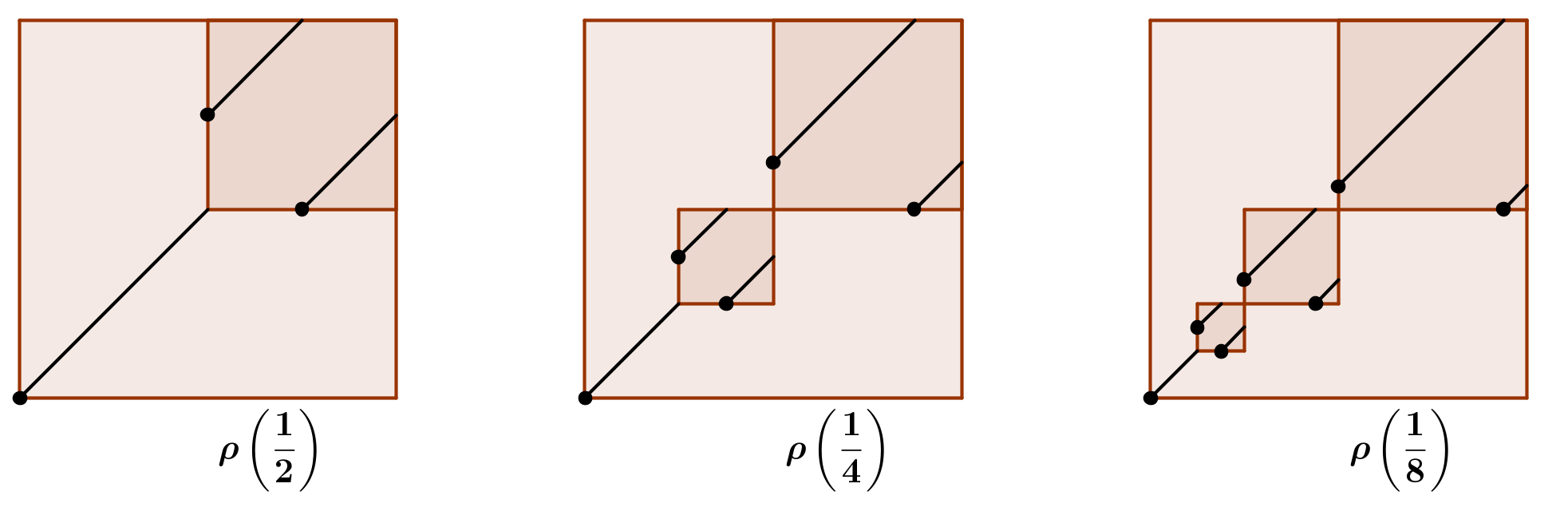}
\end{figure}

Note that $\rho(\frac{1}{2^{n}}) =  \rho(\frac{1}{2^{n+1}}) \circ \rho(\frac{1}{2^{n+1}})$, so $\rho(G_{0}) \subset \ACI$ is a commutative subset. 

Let $\frac{m_{1}}{2^{n_{1}}}$ and $\frac{m_{2}}{2^{n_{2}}}$ two representation of the same element of $G$ with $n_{1} \leq n_{2}$. Then $m_{2} = m_{1} 2^{n_{2} - n_{1}}$ and $(\rho(\frac{1}{2^{n_{2}}}))^{2^{n_{2} - n_{1}}} = \rho(\frac{1}{2^{n_{1}}})$, in particular $\rho(\frac{1}{2^{n_{2}}})^{m_{2}} = \rho(\frac{1}{2^{n_{1}}})^{m_{1}}$.

Finally we can define $\rho:G \to \ACI$ by $\rho(\frac{m}{2^{n}}) = (\rho(\frac{1}{2^{n}}))^{m}$.

Note that $$\rho\left(\frac{m_{1}}{2^{n_{1}}} + \frac{m_{2}}{2^{n_{2}}}\right) = \rho\left(\frac{m_{1} 2^{n_{2}}+ m_{2}2^{n_{1}}}{2^{n_{1}+n_{2}}}\right) = \left(\rho\left(\frac{1}{2^{n_{1}+n_{2}}}\right)\right)^{m_{1}2^{n_{2}} + m_{2}2^{n_{1}}}$$

$$ = \left(\rho\left( \frac{1}{2^{n_{1}+ n_{2}}} \right) \right)^{m_{1}2^{n_{2}}} \circ \left(\rho\left(  \frac{1}{2^{n_{1}+ n_{2}}} \right) \right)^{m_{2}2^{n_{1}}} = \rho\left(\frac{m_{1}}{2^{n_{1}}} \right) \circ \rho\left(\frac{m_{2}}{2^{n_{2}}}\right)$$
then $\rho$ is a group homomorphism.

Given $n, m \in \N$ with $m\leq n$ we have that 
$$\int_{\frac{1}{2^{m}}}^{\frac{1}{2^{m-1}}} \d\left(id(x),\rho\left(\frac{1}{2^{n}}\right)(x)\right) = \frac{1}{2^{n+1}}\left(\frac{1}{2^{m}}- \frac{1}{2^{n+1}}\right)
+
\left(\frac{1}{2^{m-1}}- \frac{1}{2^{n+1}}\right)\frac{1}{2^{n+1}} $$ 
$$\leq \frac{1}{2^{m-1}}\frac{1}{2^{n+1}}.$$ 

Then $d(id,\rho(\frac{1}{2^{n}})) \leq \frac{1}{2^{n-1}}$, so we  conclude that \(\rho\) is continuous.

Finally, since $\sharp \rho(\frac{1}{2^{n}}) = 2n$, we have that $\sharp \circ \rho$ is not finite in any neighbourhood of 0.
\end{ej}

The previous example can be adapted to a non continuous homomorphism of $\R$ to $\ACI$.

\begin{ej}
Let $\mathcal{B} = \{v_{i}, i \in I\}$ a Hamel basis of $\R$ over $\Q$, with $I$ a total well-ordered set.

Let $\rho$ the morphism of the previous example.

Let $\rho_{R}:\R \to \ACI$ the morphism define as follow. 
For $v_{i}$ in the basis $\mathcal{B}$ and \(\lambda_{i} \in \Q^{+}\) by \(\rho_{R}(\lambda_{i} v_{i}) = id\) if \(\#\{j \in I : j \leq i\}\) is not finite, in other case let \(n = \#\{j \in I : j \leq i\}\) and

$\rho_{R}(\lambda_{i}v_{i})(x) = \left \{ \begin{matrix} x & \mbox{ if } x \in [0,\frac{1}{2^{n}}), \\
x + \lambda_{i}\left(\frac{1}{2^{n+1}}\right)  & \mbox{ if } x \in \left[\frac{1}{2^{m}} ,\frac{1}{2^{m-1}}- \lambda_{i}\frac{1}{2^{n+1}}\right) \mbox{ with } m < n,
 \\ x -\left(\frac{1}{2^{m-1}} - \lambda_{i}\frac{1}{2^{n+1}}\right) & \mbox{ if } x \in \left[\frac{1}{2^{m-1}}- \lambda_{i}\frac{1}{2^{n+1}}, \frac{1}{2^{m-1}}\right) \mbox{ with } m < n.
 \end{matrix}\right. .$

For a  generic vector \(v = \sum_{i \in I_{0}} \lambda_{i} v_{i}\) we define \(\rho_{R}(v) = \circ_{i \in I_{0}} \rho_{R}(\lambda_{i}v_{i})\)
 
This homomorphism verifies that \(\sharp \circ \rho_{R}\) is not bounded. 
\end{ej}

\bibliographystyle{plain} 
\bibliography{sample}

\end{document}